\newtheorem{thm}{Theorem}[section]
\newtheorem{cor}[thm]{Corollary}
\newtheorem{lem}[thm]{Lemma}
\newtheorem{prop}[thm]{Proposition}
\newtheorem{ques}[thm]{Question}
\theoremstyle{definition}
\newtheorem{defn}[thm]{Definition}
\newtheorem{rem}[thm]{Remark}
\newtheorem{examp}[thm]{Example}
\numberwithin{equation}{section}
\newcommand{\sO}{{\mathcal O}}
\newcommand{\G}{{\mathbb G}}
\renewcommand{\P}{{\mathbb P}}
\newcommand{\Spec}{\mathrm{Spec}}
\newcommand{\rk}{\mathrm{rk}}
\newcommand{\Fil}{\mathrm{Fil}}
\newcommand{\Gr}{\mathrm{Gr}}
\author[Mao Sheng]{Mao Sheng}
\author[Hao Sun]{Hao Sun}
\author[Jianping Wang]{Jianping Wang}
\begin{document}
\title[Existence of gr-semistable filtrations]{On the existence of gr-semistable filtrations of orthogonal/symplectic $\lambda$-connections}
\maketitle

\begin{abstract}
In this paper, we study the existence of gr-semistable filtrations of orthogonal/symplectic $\lambda$-connections. It is known that gr-semistable filtrations always exist for flat bundles in arbitrary characteristic. However, we found a counterexample of orthogonal flat bundles of rank 5 in positive characteristic. The central new idea in this example is the notion of quasi gr-semistability for orthogonal/symplectic $\lambda$-connections. We establish the equivalence between gr-semistability and quasi gr-semistablity for an orthogonal/symplectic $\lambda$-connection. This provides a way to determine whether an orthogonal/symplectic $\lambda$-connection is gr-semistable. As an application, we obtain a characterization of gr-semistable orthogonal $\lambda$-connections of rank $\leq 6$.
\end{abstract}

\tableofcontents

\renewcommand{\thefootnote}{\fnsymbol{footnote}}
\footnotetext[1]{Key words: orthogonal/symplectic $\lambda$-connection, semistability, gr-semistability, quasi gr-semistability}
\footnotetext[2]{MSC2020: 14D07, 14J60}

\section{Introduction}

Let $X$ be a smooth projective variety over an algebraically closed field $k$. Given $\lambda \in k$, a $\lambda$-connection on a vector bundle $V$ over $X$ is a $k$-linear morphism
\begin{align*}
    \nabla: V \rightarrow V \otimes \Omega_X,
\end{align*}
satisfying the so-called $\lambda$-Leibniz rule. It is integrable if it satisfies the integrability condition $\nabla^2=0$. When $\lambda=1$, $(V,\nabla)$ is a flat bundle, while when $\lambda=0$, the pair is a Higgs bundle. When $k=\mathbb{C}$, Simpson considered the moduli space $\mathcal{M}_{\rm Hod}(X,r)$ of (Gieseker) semistable integrable $\lambda$-connections on $X$ of rank $r$, and showed that there is a natural morphism
\begin{align*}
    \mathcal{M}_{\rm Hod}(X,r) \rightarrow \mathbb{A}^1, \quad (V,\nabla,\lambda) \mapsto \lambda
\end{align*}
such that the preimage of $\lambda=0$ is the Dolbeault moduli space $\mathcal{M}_{\rm Dol}(X,r)$ and the preimage of $\lambda=1$ is the de Rham moduli space $\mathcal{M}_{dR}(X,r)$ \cite[Proposition 4.1]{Si97}. There is a natural $\mathbb{G}_m$-action on $\mathcal{M}_{\rm Hod}(X,r)$, i.e.
\begin{align*}
    t \cdot (V,\nabla,\lambda) := (V, t\nabla, t\lambda).
\end{align*}
Consider the limit $\lim\limits_{t \rightarrow 0} (V, t\nabla, t\lambda)$. In the case of curves (hence Gieseker semistability coincides with slope semistability), Simpson gave an effective approach to find such a limit, which is called \emph{iterated destabilizing modifications} \cite{S10}. We briefly review the  background of this approach. Let $(V,\nabla)$ be a flat bundle. An effective finite decreasing filtration of subbundles
\begin{align*}
\Fil: 0=Fil_m \subset Fil_{m-1}  \subset \cdots \subset Fil_0=V
\end{align*}
is called a \emph{Hodge filtration} of $(V,\nabla)$ if $\Fil$ satisfies the \emph{Griffiths transversality condition}:
$$\nabla(Fil_{i+1})\subset Fil_{i}\otimes \Omega_X, \ i \geq 1.$$
From a Hodge filtration we shall obtain a well-defined Higgs bundle $(E,\theta)$ by taking grading, where $E: = \Gr_{\Fil}(V)$ and $\theta$ is induced by $\nabla$. If $(E,\theta)$ is semistable as a Higgs bundle, then we say the Hodge filtration is \emph{gr-semistable}. In fact, Simpson proved that gr-semistable filtrations always exist. The notion of a gr-semistable Hodge filtration (with respect to a chosen polarization) has been generalized to a high dimensional base (\cite{LSZ19,L13}). Here we shall make the existence of a gr-semistable Hodge filtration as a defining property of a $\lambda$-connection, which is termed as \emph{gr-semistability} (see Definition \ref{gr-semistable}). It has the following fundamental significance.

\begin{thm}[Simpson (Theorem 2.5 \cite{S10}), Lan-Sheng-Yang-Zuo (Theorem A.4 \cite{LSZ19}), Langer (Theorem 5.5 \cite{L13})]\label{the case of flat bundles}
Let $k$ be an algebraically closed field and $X$ a smooth projective variety over $k$, equipped with an ample line bundle $L$. Let $(V,\nabla)$ be an integrable $\lambda$-connection over $X$. Then $(V,\nabla)$ is $\mu_L$-semistable if and only if it is $\mu_L$-gr-semistable.
\end{thm}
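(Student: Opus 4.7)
The plan is to prove the biconditional by treating the two implications separately. The direction gr-semistable $\Rightarrow$ semistable is the straightforward half, handled by restricting the Hodge filtration. The converse direction is the substantive half, and I would follow Simpson's iterated destabilizing modifications, as extended to higher dimensions by Lan--Sheng--Yang--Zuo and Langer.

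For gr-semistable $\Rightarrow$ semistable, suppose $\Fil = \{Fil_i\}$ is a Hodge filtration on $(V,\nabla)$ whose associated graded Higgs bundle $(E,\theta) = (\Gr_{\Fil} V, \Gr_{\Fil} \nabla)$ is $\mu_L$-semistable. Given any $\nabla$-invariant subbundle $W \subset V$, I would define the restricted filtration $W_i := W \cap Fil_i$. Two observations suffice: first, $\{W_i\}$ satisfies Griffiths transversality with respect to $\nabla|_W$, because $\nabla(W_{i+1}) \subset (W \otimes \Omega_X) \cap (Fil_i \otimes \Omega_X) = W_i \otimes \Omega_X$; second, the associated graded $\Gr(W) := \bigoplus_i W_i/W_{i+1}$ embeds naturally as a $\theta$-invariant subsheaf of $E$. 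Higgs semistability of $(E,\theta)$ then gives $\mu_L(\Gr(W)) \leq \mu_L(E)$. Additivity of rank and degree on the short exact sequences $0 \to W_{i+1} \to W_i \to W_i/W_{i+1} \to 0$ yields $\mu_L(\Gr(W)) = \mu_L(W)$ and $\mu_L(E) = \mu_L(V)$, hence $\mu_L(W) \leq \mu_L(V)$.

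For the converse, I would construct a gr-semistable Hodge filtration by iteration. Starting from any Hodge filtration $\Fil^{(0)}$ of $(V,\nabla)$, at each step $n$: if the graded Higgs bundle is already semistable, stop; otherwise, let $\sM \subset \Gr_{\Fil^{(n)}} V$ denote its maximal destabilizing Higgs subsheaf. The sheaf $\sM$ inherits a natural grading from $\Fil^{(n)}$, and by lifting this grading back up one constructs a modified Hodge filtration $\Fil^{(n+1)}$ of $V$, still Griffiths-transverse for $\nabla$, which ``absorbs'' the destabilizing contribution of $\sM$.

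The principal obstacle is proving that this iteration terminates. In the curve case Simpson exhibits a discrete invariant (built from the ranks and degrees of the steps of $\Fil^{(n)}$) that strictly decreases at each step, while a priori slope bounds coming from the $\mu_L$-semistability of $(V,\nabla)$ restrict which destabilizing subsheaves may appear, and termination follows. In higher dimensions the argument is more delicate: one needs Langer's boundedness theorem for families of semistable sheaves with bounded discriminant to confine the sheaves $\sM$ produced at each stage to a bounded family, together with a careful Harder--Narasimhan-type numerical invariant that strictly decreases under the modification step. Making this procedure uniform in $\lambda$ --- so that the same argument handles flat bundles ($\lambda=1$), Higgs bundles ($\lambda=0$), and interpolating $\lambda$ simultaneously --- is the technically demanding point.
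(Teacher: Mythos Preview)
The paper does not prove this theorem; it is stated in the introduction with attribution to Simpson, Lan--Sheng--Yang--Zuo, and Langer, and serves purely as background motivation. So there is no proof in the paper to compare your proposal against.

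That said, your outline matches the approach of the cited references, and indeed the paper adapts exactly this strategy to the orthogonal/symplectic setting in \S3. Your description of the termination argument, however, is imprecise in a way worth flagging. You invoke Langer's boundedness theorem and a ``discrete invariant built from the ranks and degrees of the steps of $\Fil^{(n)}$''. In fact the termination mechanism in \cite{S10,LSZ19,L13} is more elementary and more uniform: one tracks the pair $\big(\mu(M_{\Fil^{(n)}}),\,\rk(M_{\Fil^{(n)}})\big)$ for the maximal destabilizer $M_{\Fil^{(n)}}$ of the associated graded Higgs sheaf, and shows this pair is weakly decreasing in the lexicographic order under the modification $\Fil^{(n)} \mapsto \Fil^{(n+1)}$. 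Since slopes of subsheaves of $V$ lie in a discrete set bounded above and ranks are positive integers, the sequence must eventually stabilize; once it does, one produces a $\nabla$-invariant subsheaf of $V$ with positive slope, contradicting semistability of $(V,\nabla)$. No boundedness theorem is needed, and the argument is identical in all dimensions. This is precisely what the present paper carries out in Lemmas~\ref{decrease} and~\ref{lem_const} for the orthogonal case, so reading those proofs will show you the correct termination invariant.
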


When $L$ is clear in the context, we shall omit $\mu_L$ when we are speaking about slope semistability. In fact, we shall simply speak of semistability as we shall only deal with slope semistability throughout the paper. The above theorem provides an alternative way to understand semistability for flat bundles in positive characteristic. Moreover, the result plays a crucial role in the theory of semistable Higgs-de Rham flows \cite{LSZ19} and then its subsequent applications.

In our attempt to generalize the theory of semistable Higgs-de Rham flows to the case of principal bundles, we found a different phenomenon, as we shall explain below. On the other hand, our study has also been driven by the following question of Simpson, which he proposed after he also proved the existence of the limit of $\G_m$-action in the case of principal bundles (\cite[Corollary 8.3]{S10}).
\begin{ques}[\cite{S10} Question 8.4]\label{quest_simp}
How to give an explicit description of the limiting points in terms of Griffiths-transverse parabolic reductions in the case of principal bundles?
\end{ques}
He proposed a principal bundle approach to the instability flag. In this paper, we shall study this question, together with his proposal, in the special case of orthogonal/symplectic bundles.

The following example indicates that the approach of iterated destabilizing modifications may not work any more in the case of principal bundles, and hence some new idea (perhaps some sort of base change) is requested.
\begin{examp}
 Let $X$ be a genus $g$ curve over an algebraically closed field $k$ in positive characteristic $p$ with $2<p < g-1$. Let $M'$ be a stable bundle on $X$ with $\rk(M')=2$ and $\deg(M')=1$ whose Frobenius pullback $M:=F^*_X(M')$ is again stable. By Riemann-Roch theorem, the bundle $(\wedge^2 M^{\vee}) \otimes \Omega_X$ has nonzero global sections. Now the orthogonal structure $\langle \, , \, \rangle$ on
\begin{align*}
    V = M \oplus M^{\vee} \oplus \mathcal{O}_X,
\end{align*}
is defined as the orthogonal direct sum of the standard orthogonal structure $\langle \, , \, \rangle_M$ over $M \oplus M^\vee$ and the natural one $\langle \, , \, \rangle_{\mathcal{O}_X}$ on $\mathcal{O}_X$. Then $V$ is an unstable orthogonal bundle of $\rk(V)=5$ with maximal destabilizer $M$. Pick any nonzero section of $(\wedge^2 M^{\vee}) \otimes \Omega_X\subset (M^{\vee})^{\otimes 2}\otimes \Omega_X$, which shall be regarded as a nonzero morphism $\theta_1: M \rightarrow M^{\vee} \otimes \Omega_X$. Clearly for $a,b\in M$, it holds that
$$\langle \theta_1(a),b \rangle_M + \langle a,\theta_1(b) \rangle_M = 0.$$
Therefore, we obtain an orthogonal connection on $V$
\begin{align*}
\nabla:=
\begin{pmatrix}
    \nabla_0 & & \\
    \theta_1 & -\nabla^\vee_0 & \\
    & & d
\end{pmatrix}: V \rightarrow V \otimes \Omega_X,
\end{align*}
where $\nabla_0$ is the canonical connection on $M$ and $d: \mathcal{O}_X \rightarrow \Omega_X$ is the exterior differential. Then the triple $(V, \langle \, , \, \rangle, \nabla)$ is a semistable orthogonal integrable connection, but does not admit \emph{any} orthogonal Hodge filtration so that its associated graded orthogonal Higgs bundle is semistable. The statement is based on our next result (see \S\ref{subsect_rak_5} for a complete proof of this statement).
\end{examp}

The example shows that the analogue of Theorem \ref{the case of flat bundles} is \emph{false} in the case of orthogonal integrable connections. That is, the notion of gr-semistability is strictly stronger than that of semistabililty for orthogonal integrable connections (at least in positive characteristic). To the positive side of this study, we obtained the following theorem, which is the main result of the paper.
\begin{thm}(Theorem \ref{thm_gr_and_quasi_gr_odd} and \ref{thm_even})
Let $k,X,L$ be as in Theorem \ref{the case of flat bundles}. Let $D$ be a reduced normal crossing divisor on $X$. Let $(V,\langle,\rangle,\nabla)$ be a logarithmic orthogonal/symplectic $\lambda$-connection. Then it is gr-semistable if and only if it is semistable and quasi gr-semistable.
\end{thm}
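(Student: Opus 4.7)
The plan is to prove the two implications separately, with the converse being the substantive direction.

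For the easier direction, suppose $(V,\langle\,,\,\rangle,\nabla)$ is gr-semistable. By definition this means that there exists an orthogonal/symplectic Hodge filtration $\Fil$ whose associated graded $(\Gr_{\Fil}V,\theta)$ is semistable as an orthogonal/symplectic Higgs bundle. I would first observe that quasi gr-semistability follows immediately, because the notion is designed as a genuine weakening of gr-semistability (the ``quasi'' relaxes the compatibility between the filtration and the bilinear form, or the type of semistability required on the graded object). For the semistability of $(V,\langle\,,\,\rangle,\nabla)$, I would forget the orthogonal/symplectic structure: the filtration $\Fil$ becomes an ordinary Hodge filtration of the underlying $\lambda$-connection $(V,\nabla)$, and $(\Gr_{\Fil}V,\theta)$ is semistable as a Higgs bundle (semistability as a principal $G$-Higgs bundle descends to semistability of the underlying Higgs bundle for $G = \OO, \Sp$). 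Theorem \ref{the case of flat bundles} then gives that $(V,\nabla)$ is semistable, hence so is $(V,\langle\,,\,\rangle,\nabla)$ (these two notions coincide for classical groups).

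For the converse, assume $(V,\langle\,,\,\rangle,\nabla)$ is semistable and quasi gr-semistable. The plan is to upgrade a quasi gr-semistable Hodge filtration — which has the right grading property but only a weakened compatibility with $\langle\,,\,\rangle$ — to a genuine orthogonal/symplectic Hodge filtration with semistable graded object. I would proceed by induction on $\rk V$, in each step locating an isotropic subbundle $W \subset V$ that (a) is compatible with the given quasi data, (b) is preserved by $\nabla$ up to Griffiths transversality, and (c) has the correct Hodge weight so that passing to $W^\perp/W$ produces a smaller-rank orthogonal/symplectic $\lambda$-connection that remains both semistable and quasi gr-semistable. Invoking the induction hypothesis on $W^\perp/W$, one obtains a gr-semistable filtration there, which can then be lifted and combined with the flag $W \subset W^\perp \subset V$ to furnish the desired orthogonal/symplectic Hodge filtration on $V$ itself.

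The central technical point, and the main obstacle, is the construction of the isotropic subbundle $W$ at each inductive step: semistability of $(V,\langle\,,\,\rangle,\nabla)$ controls slopes and prevents destabilizing maximal isotropic subbundles from appearing, while quasi gr-semistability provides the positional data needed to place $W$ coherently within the Hodge weight structure. Verifying that these two ingredients interact compatibly — in particular that the induced form on $\Gr W^\perp/W$ remains non-degenerate and that the graded Higgs bundle on $V$ assembled from $W$, $W^\perp/W$, and $V/W^\perp$ is semistable as an orthogonal/symplectic Higgs bundle — requires a careful case analysis. This is presumably why the paper splits the statement into Theorem \ref{thm_gr_and_quasi_gr_odd} (odd orthogonal rank, where the middle factor has a single one-dimensional line and the structure is rigid) and Theorem \ref{thm_even} (even orthogonal and symplectic ranks, where the hyperbolic splittings allow more flexibility but also more subcases). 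The rank-$5$ orthogonal counterexample in the introduction should serve as the guiding light: it pinpoints exactly where the naive iterated-destabilizing-modification argument of Simpson breaks down, and thereby dictates the precise form of the quasi gr-semistability condition needed to rescue the equivalence.
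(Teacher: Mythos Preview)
Your forward direction is roughly right in spirit, but your description of quasi gr-semistability is off. A quasi gr-semistable filtration is \emph{not} one with ``weakened compatibility with $\langle\,,\,\rangle$''; it is a genuine orthogonal/symplectic Hodge filtration (so $Fil_i = Fil_{m-i}^\perp$ throughout) satisfying two technical numerical conditions (I and II in Definition~\ref{defn_gr_semi_odd}) that are strictly weaker than demanding the graded Higgs sheaf be semistable. The paper's Proposition~\ref{ssregular} shows gr-semistable $\Rightarrow$ quasi gr-semistable directly: if $(\Gr_{\Fil}V,\theta)$ is semistable then every isotropic Higgs subsheaf has nonpositive degree, which forces Conditions I and II. Your appeal to Theorem~\ref{the case of flat bundles} for gr-semistable $\Rightarrow$ semistable is fine.

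The real problem is your converse. You propose induction on $\rk V$ via an isotropic subbundle $W$ and passage to $W^\perp/W$. The paper does \emph{not} do this, and it is unclear how your approach would go through: you give no mechanism for producing $W$, no argument that $W^\perp/W$ inherits quasi gr-semistability, and no procedure for lifting a gr-semistable filtration on the quotient back to $V$. What the paper actually does is an orthogonal/symplectic adaptation of Simpson's iterated destabilizing modification. Starting from a quasi gr-semistable filtration $\Fil$, one takes the maximal destabilizer $M_{\Fil}$ of the graded Higgs sheaf (which is automatically graded and isotropic, Lemma~\ref{destabilizer}) and uses it to build a new, longer orthogonal/symplectic filtration $\xi(\Fil)$. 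The entire point of Condition~II in the definition of quasi gr-semistability is to guarantee that $\xi(\Fil)$ is again a \emph{Hodge} filtration, i.e.\ that $\nabla(M_{m+1}) \subset M_{m+1}^\perp \otimes \Omega_X(\log D)$; Condition~I then ensures $\xi(\Fil)$ remains quasi gr-semistable (Lemma~\ref{preserve}). A comparison morphism $M_{\xi(\Fil)} \to M_{\Fil}$ shows the pair $(\mu(M_{\xi^k(\Fil)}), \rk M_{\xi^k(\Fil)})$ is eventually constant (Lemma~\ref{decrease}), and once it stabilizes the resulting $\nabla$-invariant subsheaf of positive slope contradicts semistability of $(V,\nabla)$ unless the graded was already semistable (Lemma~\ref{lem_const}). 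You have missed this mechanism entirely; the odd/even split in the paper reflects the two shapes an orthogonal/symplectic filtration can take, not a difference in the inductive structure you imagine.
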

\begin{rem}
We also consider $\Omega_X(\log D)$-valued $\lambda$-connections, which is termed as logarithmic $\lambda$-connections. Note that the integrability condition on a $\lambda$-connection is not required in the statement.
\end{rem}

The definition of quasi gr-semistability is somewhat technical. See Definition \ref{defn_gr_semi_odd} and \ref{defn_gr_semi_even} for a precise formulation. Roughly speaking, the quasi gr-semistability is a necessary and sufficient condition, under which the method of iterated destabilizing modification works.

The structure of the paper is manifest: After laying down the necessary setup and terminologies in \S\ref{sect_pre}, we establish the main result in \S\ref{sect_odd} and \S\ref{sect_even}. In \S\ref{sect_small_rank}, we explicate the quasi gr-semistability in small ranks, viz. $\rk(V)\leq 6$.

\section{Preliminaries}\label{sect_pre}

Let $X$ be a smooth projective variety over an algebraically closed field $k$ with a fixed reduced normal crossing divisor $D=\sum\limits_{i=1}^{k}D_i$. Denote by $\Omega_X$ the cotangent sheaf of $X$. In \S\ref{subsect_rees}, we review the correspondence between Rees $G$-bundles and filtered $G$-bundles studied in \cite{Lo17a,Lo17b}. The correspondence gives a way to study filtered $G$-bundles by taking an appropriate faithful representation $G \rightarrow {\rm GL}(W)$ and consider the corresponding filtered bundles. Based on this idea, we give the definition of filtered orthogonal/symplectic sheaves, which is also considered in \cite[\S5]{GS03}. Then, in \S\ref{subsect_log_lambda_conn}, we introduce the main objects studied in this paper, the \emph{Hodge filtered logarithmic orthogonal/symplectic $\lambda$-connections} $(V, \langle \, , \, \rangle, \nabla, \Fil)$ (Definition \ref{defn_na_Hod_fil}), where $(V, \langle \, , \, \rangle)$ is an orthogonal/symplectic sheaf, $\nabla$ is an orthogonal/symplectic $\lambda$-connection and $\Fil$ is an orthogonal/symplectic filtration obeying Griffiths transversality. In the end of this section, we discuss the stability condition of $(V, \langle \, , \, \rangle, \nabla)$ and its Harder--Narasimhan filtration.

\subsection{Rees Construction}\label{subsect_rees}
In this subsection, we briefly review Rees construction, which gives a correspondence between filtered $G$-bundles on $X$ and $\mathbb{G}_m$-equivariant $G$-bundles on $X \times \mathbb{A}^1$. We refer the reader to \cite{L13,Lo17a,Lo17b} for more details.

Let $V$ be a vector bundle (locally free sheaf) of finite rank on $X$ together with a filtration
\begin{align*}
    \Fil: 0 = Fil_m \subseteq Fil_{m-1} \subseteq \dots \subseteq Fil_0 = V
\end{align*}
of subbundles on $X$. Furthermore, we always assume that $Fil_i =0$ for $i \geq m$ and $Fil_i = V$ for $i \leq 0$. Such a pair $(V,\Fil)$ is called a \emph{filtered bundle} on $X$.

Now we consider Rees bundles. A \emph{Rees bundle} on $X$ is a $\mathbb{G}_m$-equivariant bundle $\mathcal{V}$ of finite rank on $X \times \mathbb{A}^1$ (with respect to the natural $\mathbb{G}_m$-action on $\mathbb{A}^1$). Let $\mathbb{A}^1 = \Spec \, k[t]$. As proven in \cite[Lemma 2.1.3]{Lo17b}, a Rees bundle on $X$ is equivalent to a triple $(V,\mathcal{V},\varphi)$ such that
\begin{itemize}
    \item $\mathcal{V}$ is a vector bundle over $X \times \mathbb{A}^1$,
    \item $V$ is a vector bundle over $X$,
    \item $\varphi: \mathcal{V}|_{t \neq 0}  \xrightarrow{\cong} \pi_X^* V$ is an isomorphism, where $\pi_X$ is the projection to $X$, such that there exist generators $\widetilde{v}_i$ for $\mathcal{V}$ and $v_i$ for $V$ satisfying $\varphi(\widetilde{v}_i) = v_i \otimes t^{n_i}$, where $n_i \in \mathbb{Z}$.
\end{itemize}
Applying the argument of \cite[Proposition 2.1.5]{Lo17b}, we have the following equivalence.

\begin{lem}\label{lem_rees_fil}
The category ${\rm Fil}_X$ of filtered bundles on $X$ is equivalent to the category ${\rm Rees}_X$ of Rees bundles on $X$. Moreover, the equivalence holds as tensor categories.
\end{lem}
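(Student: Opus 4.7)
The plan is to construct explicit quasi-inverse functors between $\mathrm{Fil}_X$ and $\mathrm{Rees}_X$ by the classical Rees/filtration dictionary, and then verify compatibility with tensor products. I would follow the strategy of \cite[Proposition 2.1.5]{Lo17b}, with the tensor assertion as an addendum.

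First, for the Rees functor $\mathcal{R}\colon\mathrm{Fil}_X\to\mathrm{Rees}_X$, I would set
\[
\mathcal{R}(V,\Fil) \;:=\; \bigoplus_{i\in\Z} Fil_i \cdot t^{-i} \;\subset\; V\otimes_k k[t,t^{-1}],
\]
viewed as a graded $\mathcal{O}_X[t]$-submodule (grading by $t$-exponent), equivalently a $\G_m$-equivariant $\mathcal{O}_{X\times\A^1}$-module. Decreasingness of $\Fil$ ensures closure under multiplication by $t$, and the inclusion realizes the required isomorphism $\varphi$ after inverting $t$. For local freeness of the correct rank, I would locally on $X$ choose a splitting $V=\bigoplus_j\mathrm{gr}^j$ of $\Fil$ with $Fil_i=\bigoplus_{j\geq i}\mathrm{gr}^j$ (such a splitting exists since each $Fil_i$ is a local direct summand of $V$); this gives $\mathcal{R}(V,\Fil)\cong\bigoplus_j\mathrm{gr}^j\otimes_k t^{-j}k[t]$ locally, which is manifestly locally free of rank $\rk V$ and furnishes the adapted frames $\widetilde v_i$ with $\varphi(\widetilde v_i)=v_i\otimes t^{n_i}$.

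For the inverse functor $\mathcal{F}\colon\mathrm{Rees}_X\to\mathrm{Fil}_X$, given $(V,\mathcal{V},\varphi)$, I use the $\G_m$-equivariant weight decomposition $\mathcal{V}=\bigoplus_d\mathcal{V}_d$ of the underlying graded $\mathcal{O}_X[t]$-module and define $Fil_i(V)\subseteq V$ as the image of $\mathcal{V}_{-i}$ under $\varphi$ followed by the identification $V\otimes t^{-i}\cong V$. The frames $\varphi(\widetilde v_j)=v_j\otimes t^{n_j}$ exhibit $Fil_i$ locally as the $\mathcal{O}_X$-span of those $v_j$ with $-n_j\geq i$, hence as a local direct summand of $V$; injectivity of $t$-multiplication, which follows from local freeness of $\mathcal{V}$, implies $Fil_{i+1}\subseteq Fil_i$ and stabilization at both ends. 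The two functors are mutually quasi-inverse and match up morphisms, since a $\G_m$-equivariant map of Rees bundles corresponds to a graded map of the underlying modules, which on the Fil side is precisely a filtration-preserving map.

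For tensor compatibility I would check directly that
\[
\mathcal{R}\bigl((V,\Fil)\otimes(V',\Fil')\bigr) \;=\; \mathcal{R}(V,\Fil)\otimes_{\mathcal{O}_{X\times\A^1}}\mathcal{R}(V',\Fil'),
\]
where the tensor filtration on $V\otimes V'$ is the convolution $Fil_n(V\otimes V'):=\sum_{i+j=n}Fil_i(V)\otimes Fil_j(V')$; both sides compute the submodule $\bigoplus_n\bigl(\sum_{i+j=n}Fil_i\otimes Fil'_j\bigr)\cdot t^{-n}\subset V\otimes V'\otimes_k k[t,t^{-1}]$, and simultaneous local splittings of $\Fil$ and $\Fil'$ show this convolution filtration is locally $\bigoplus_{a+b\geq n}\mathrm{gr}^a\otimes\mathrm{gr}'^b$, a subbundle of the correct rank. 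The main technical subtlety throughout is local freeness of $\mathcal{R}(V,\Fil)$ across the divisor $\{t=0\}$, where the fibre collapses to $\mathrm{gr}_\Fil(V)$; this is handled precisely by the local splittings of subbundle filtrations on $X$, which are also what produce the adapted frames $\widetilde v_i$ in the equivalent description of a Rees bundle.
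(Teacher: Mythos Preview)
Your proposal is correct and follows essentially the same approach as the paper's proof: both construct the Rees bundle as $\sum_i Fil_i\cdot t^{-i}$ and recover the filtration from the adapted frames $\varphi(\widetilde v_j)=v_j\otimes t^{n_j}$, citing \cite[Proposition~2.1.5]{Lo17b}. The paper's argument is only a brief sketch of the correspondence, so your added details---the local splittings establishing local freeness across $\{t=0\}$ and the explicit verification of tensor compatibility via the convolution filtration---fill in points the paper leaves to the reader.
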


\begin{proof}
We only give the correspondence briefly. Given a filtered bundle $(V, {\rm Fil})$, we define a bundle
\begin{align*}
    \mathcal{V}: = \sum t^{-i} Fil_{i} \otimes_{\mathcal{O}_X} \mathcal{O}_{X \times \mathbb{A}^1}
\end{align*}
on $X \times \mathbb{A}^1$ together with a natural $\mathbb{G}_m$-equivariant structure induced from that on $\mathbb{A}^1$. Clearly, $\mathcal{V}$ is a Rees bundle on $X$.

On the other direction, given a Rees bundle $\mathcal{V}$ on $X$, denote by $V: = \mathcal{V}|_{t = 1}$ the restriction to $t=1$. We will give a filtration on $V$ by working locally on an affine open subset $U \subseteq X$, over which $V(U)$ is a free $\mathcal{O}_X$-module. Let $\{v_i\}$ be a set of generators of $V(U)$ and let $\{\widetilde{v}_i\}$ be a set of generators of $\mathcal{V}|_{t \neq 0} (U \times \mathbb{G}_m)$ such that $\varphi(\widetilde{v}_i) = v_i \otimes t^{n_i}$ under the isomorphism $\varphi: \mathcal{V}|_{t \neq 0} \cong \pi^*_X V$. Since $V(U)$ is free, we can assume that $\{v_i\}$ is a basis of $V(U)$. Then, given an integer $i$, we define a free submodule $Fil_i(U) \subseteq V(U)$, which is spanned by $v_j$ such that $n_j \leq -i$. In other words, $Fil_i(U)$ includes all sections $v \in V(U)$ such that the vanishing order of $\widetilde{v}$ at $t=0$ is at least $i$. Clearly, $Fil_{i+1}(U) \subseteq Fil_{i}(U)$ and we obtain a filtration of $V(U)$. By patching together $Fil_i(U)$, we obtain a subbundle $Fil_i \subseteq V$, and thus a filtration of subbundles of $V$ as desired.
\end{proof}

\begin{rem}\label{rem_graded}
Given a filtered bundle $(V, \Fil)$, we can define a graded bundle ${\rm Gr}_{\rm Fil}(V) := \bigoplus\limits_{i=0}^{m-1} \frac{Fil_i}{Fil_{i+1}}$, which is also the restriction of the corresponding Rees bundle $\mathcal{V}$ to $t= 0$.
\end{rem}

Now we consider the case of principal bundles. Let $G$ be a connected reductive group over $k$. Similar to the case of vector bundles, a \emph{Rees $G$-bundle} is defined as a $\mathbb{G}_m$-equivariant $G$-bundle on $X \times \mathbb{A}^1$. Now we will give an equivalent definition of Rees $G$-bundle in the viewpoint of Tannakian categories. Denote by ${\rm Rep}(G)$ the category of $G$-representations with values in free $k$-modules of finite rank, which is a (rigid) tensor category. Recall that a $G$-bundle is equivalent to a faithful exact tensor functor ${\rm Rep}(G) \rightarrow {\rm Vect}_X$ (see \cite[\S 6]{Si92} for instance). Now we choose a representation $\rho : G \rightarrow {\rm GL}(W)$ and a Rees $G$-bundle $\mathcal{E}$, and define the associated bundle $\mathcal{E} \times_G W$, which is a Rees bundle on $X$. This construction induces a fiber functor ${\rm Rep}(G) \rightarrow {\rm Rees}_X$.  Lovering proved the following equivalence of categories.
\begin{lem}[Proposition 2.2.7 in \cite{Lo17b}]\label{lem_rees_G}
The category ${\rm Rees}^G_X$ of Rees $G$-bundles is equivalent to the category ${\rm Fib}({\rm Rep}(G), {\rm Rees}_X)$ of fiber functors ${\rm Rep}(G) \rightarrow {\rm Rees}_X$.
\end{lem}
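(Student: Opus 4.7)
The plan is to deduce this equivariant version from the classical Tannakian reconstruction for $G$-bundles. Recall that for any scheme $Y$ one has an equivalence between the groupoid of $G$-bundles on $Y$ and the groupoid of exact, faithful, $k$-linear tensor functors $\Rep(G)\to \mathrm{Vect}_Y$ (cf.\ \cite[\S 6]{Si92}). Apply this to $Y = X\times \A^1$: a $G$-bundle $\sE$ on $X\times\A^1$ corresponds to a fiber functor $F_\sE:\Rep(G)\to \mathrm{Vect}_{X\times\A^1}$, given by $W\mapsto \sE\times_G W$.

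The first step is to promote this to the $\G_m$-equivariant setting. Given a $\G_m$-action on $X\times \A^1$ (here trivial on $X$ and standard on $\A^1$), I would show that the Tannakian functor $F_\sE$ intertwines $\G_m$-equivariant structures: a $\G_m$-linearization on $\sE$ defines, for each $W\in\Rep(G)$, a $\G_m$-linearization on the associated bundle $\sE\times_G W$, functorial in $W$ and compatible with tensor products (because the $G$-action and $\G_m$-action commute, so the quotient $\sE\times_G W$ inherits a canonical $\G_m$-action). Conversely, given a tensor-natural system of $\G_m$-linearizations on $F_\sE(W)$ for all $W$, one reconstructs a $\G_m$-linearization on $\sE$ by applying the inverse of the Tannakian equivalence, since $\G_m$-equivariant structures can be detected on fiber functors.

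The second step is the tautological identification $\mathrm{Vect}^{\G_m}_{X\times\A^1} = \mathrm{Rees}_X$: a $\G_m$-equivariant vector bundle on $X\times\A^1$ is exactly what was defined as a Rees bundle at the start of \S\ref{subsect_rees}. Under this identification, the fiber functor $F_\sE$ refined with its $\G_m$-structure becomes a tensor functor $\Rep(G)\to \mathrm{Rees}_X$, i.e.\ an object of $\mathrm{Fib}(\Rep(G),\mathrm{Rees}_X)$. Conversely, given any such fiber functor, composition with the forgetful functor $\mathrm{Rees}_X \to \mathrm{Vect}_{X\times\A^1}$ yields a fiber functor to $\mathrm{Vect}_{X\times\A^1}$, hence (by the classical equivalence) a $G$-bundle $\sE$ on $X\times\A^1$, and the $\G_m$-equivariant refinement provides the linearization making $\sE$ a Rees $G$-bundle.

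The two constructions are mutually inverse on the nose, since they are both induced by mutually inverse equivalences at the level of underlying $G$-bundles and underlying vector bundles. The main technical point to verify is compatibility with the tensor structures and with the $\G_m$-linearizations simultaneously; this is where one uses that $\mathrm{Rees}_X$ inherits its tensor structure from $\mathrm{Vect}_{X\times\A^1}$ via the tensor product of $\G_m$-equivariant sheaves, so that the forgetful functor $\mathrm{Rees}_X\to \mathrm{Vect}_{X\times\A^1}$ is tensor and fully faithful on morphisms of equivariant objects. I expect this bookkeeping of compatibilities — essentially showing that the enhancement of a fiber functor from $\mathrm{Vect}_{X\times\A^1}$ to $\mathrm{Rees}_X$ is the same data as a $\G_m$-linearization on the associated $G$-bundle — to be the main (though routine) obstacle, and this is exactly the content of \cite[Proposition 2.2.7]{Lo17b}.
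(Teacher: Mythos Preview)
The paper does not give its own proof of this lemma: it is stated as a citation of \cite[Proposition 2.2.7]{Lo17b} and used as a black box. Your outline --- classical Tannakian reconstruction over $X\times\A^1$, then promoting the equivalence to the $\G_m$-equivariant setting and identifying $\G_m$-equivariant bundles on $X\times\A^1$ with $\mathrm{Rees}_X$ --- is a correct and standard way to prove such a statement, and is in fact the approach Lovering takes; so there is nothing to compare against in the present paper.
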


For a \emph{filtered $G$-bundle}, it is defined as a pair $(E,\sigma)$, where $E$ is a $G$-bundle and $\sigma: X \rightarrow E/P$ is reduction of structure group for some parabolic subgroup $P \subseteq G$. Similar to Lemma \ref{lem_rees_G}, we have the following equivalent description:
\begin{lem}[\S3 in \cite{Lo17a}]\label{lem_fil_G}
The category ${\rm Fil}_X^G$ of filtered $G$-bundles is equivalent to the category ${\rm Fib}({\rm Rep}(G), {\rm Fil}_X)$ of fiber functors ${\rm Rep}(G) \rightarrow {\rm Fil}_X$.
\end{lem}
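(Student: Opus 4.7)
The plan is to reduce the statement to Lemmas \ref{lem_rees_fil} and \ref{lem_rees_G} by establishing an intermediate equivalence ${\rm Fil}_X^G \simeq {\rm Rees}_X^G$. Once this is in place, composing with Lemma \ref{lem_rees_G} yields ${\rm Fil}_X^G \simeq {\rm Fib}({\rm Rep}(G),{\rm Rees}_X)$, and then the tensor equivalence ${\rm Fil}_X \simeq {\rm Rees}_X$ of Lemma \ref{lem_rees_fil} transports fiber functors to give ${\rm Fib}({\rm Rep}(G),{\rm Rees}_X)\simeq{\rm Fib}({\rm Rep}(G),{\rm Fil}_X)$, completing the proof.

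To build the intermediate equivalence, I would construct a functor $\Phi:{\rm Fil}_X^G\to{\rm Rees}_X^G$ as follows. Given $(E,\sigma:X\to E/P)$, choose a cocharacter $\lambda:\G_m\to G$ whose dynamical parabolic
\begin{equation*}
P(\lambda):=\{g\in G\mid \lim_{t\to 0}\lambda(t)\,g\,\lambda(t)^{-1}\ \text{exists in}\ G\}
\end{equation*}
equals $P$. The reduction $\sigma$ determines a $P$-bundle $E_P:=\sigma^{*}E$, and conjugation via $\lambda$ endows $P$ with a $\G_m$-action, producing a $\G_m$-equivariant $P$-bundle on $X\times\G_m$. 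The extension to $X\times\A^1$ proceeds by the Rees-type construction for $P$-bundles, exploiting the fact that $\lambda$ extends to a morphism $\A^1\to\overline{P}$ contracting the unipotent radical $U\subset P$ to the identity as $t\to 0$; push-forward along $P\hookrightarrow G$ then yields a Rees $G$-bundle $\Phi(E,\sigma)$. Independence of $\lambda$ follows because any two choices with $P(\lambda)=P$ differ by conjugation in $P$, and this ambiguity is absorbed by the $P$-reduction. For the quasi-inverse $\Psi$, given a Rees $G$-bundle $\mathcal{E}$ we take $E:=\mathcal{E}|_{t=1}$ and extract the parabolic reduction from how $\mathcal{E}$ degenerates at $t=0$, namely from the structure group of $\mathcal{E}|_{t=0}$ and the $\G_m$-equivariance.

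The main obstacle will be verifying compatibility with the Tannakian fiber-functor descriptions: for each representation $\rho:G\to\GL(W)$, the filtration on $E\times^G W$ induced by $\sigma$ and $\rho|_P$ must agree with the filtration on the underlying vector bundle of $\Phi(E,\sigma)\times^G W$ coming from the Rees construction. This reduces to the weight decomposition $W=\bigoplus_{n\in\Z}W_n$ under $\lambda$: both constructions produce
\begin{equation*}
Fil_i(E\times^G W)=E_P\times^P\Bigl(\bigoplus_{n\geq i}W_n\Bigr),
\end{equation*}
which is well-defined because the unipotent radical $U$ acts on $W$ by strictly raising $\lambda$-weights, so $\bigoplus_{n\geq i}W_n$ is $P$-stable. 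Once this matching is checked to be natural in $W$ and multiplicative under tensor product (which is automatic from the compatibility of weight decompositions with $\otimes$), the chain of equivalences is tensorial and the statement follows.
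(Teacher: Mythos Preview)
The paper does not supply its own proof of this lemma; it is stated with a bare citation to \cite{Lo17a}, \S3, so there is no in-paper argument to compare against.

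Your strategy of traversing the square in the opposite direction---first establishing ${\rm Fil}_X^G\simeq{\rm Rees}_X^G$ directly, then invoking Lemmas \ref{lem_rees_fil} and \ref{lem_rees_G}---is a legitimate plan, and the geometric construction of the Rees bundle from a parabolic reduction via a cocharacter is the right picture.

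There is, however, a genuine gap. Your claim that ``any two choices with $P(\lambda)=P$ differ by conjugation in $P$'' is false. In $\GL_3$, for instance, the cocharacters $t\mapsto\mathrm{diag}(t^2,t,1)$ and $t\mapsto\mathrm{diag}(t^3,t,1)$ both have dynamical parabolic equal to the standard Borel, yet they are not conjugate, and the associated Rees bundles---hence the induced $\Z$-indexed filtrations on every associated vector bundle---are genuinely different. A parabolic reduction alone records the underlying \emph{flag}; the cocharacter supplies the jump indices. Consequently your functor $\Phi$ is not well-defined on pairs $(E,\sigma)$ as the paper informally describes them. This reflects a real subtlety: a fiber functor ${\rm Rep}(G)\to{\rm Fil}_X$ carries strictly more information than a bare parabolic reduction, namely the conjugacy class of the cocharacter (the ``type'' of the filtration). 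In Lovering's treatment, to which the paper defers, this extra datum is built into the definition of a filtered $G$-bundle; the paper's one-line description is an abbreviation. To repair your argument you must either incorporate the cocharacter into the data of ${\rm Fil}_X^G$, or argue directly in the Tannakian framework rather than routing through ${\rm Rees}_X^G$.
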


Combining Lemma \ref{lem_rees_fil}, \ref{lem_rees_G} and \ref{lem_fil_G} altogether
\begin{center}
\begin{tikzcd}
{\rm Rees}^G_X \arrow[rr, leftrightarrow, "\text{Lemma \ref{lem_rees_G}}"] \arrow[dd, dotted, leftrightarrow] & & {\rm Fib}({\rm Rep}(G), {\rm Rees}_X) \arrow[dd, leftrightarrow, "\text{Lemma \ref{lem_rees_fil}}"] \\
& & \\
{\rm Fil}^G_X \arrow[rr, leftrightarrow, "\text{Lemma \ref{lem_fil_G}}"] & & {\rm Fib}({\rm Rep}(G), {\rm Fil}_X) \ ,
\end{tikzcd}
\end{center}
we see that the category ${\rm Rees}^G_X$ of Rees $G$-bundles is equivalent to the category ${\rm Fil}^G_X$ of filtered $G$-bundles. The approach of Tannakian categories not only gives an equivalent description of $G$-bundles, Rees $G$-bundles and filtered $G$-bundles theoretically, but also provides an effective way to work on specific groups.

Now we take $G$ to be orthogonal/symplectic groups, and
denote by $G \hookrightarrow {\rm GL}(W)$ the standard representation. Therefore, an orthogonal/symplectic bundle $E$ is equivalent to a pair $(V, \langle \, , \, \rangle)$, where $V:=E \times_G W$ is the associated bundle and $\langle \, , \, \rangle$ is a nondegenerate symmetric/skew-symmetric bilinear form. Since the base variety $X$ is of arbitrary dimension, it is more flexible to work with orthogonal/symplectic sheaf (see also \cite[\S 5]{GS03}).
\begin{defn}
An \emph{orthogonal/symplectic sheaf} is a pair $(V, \langle \, , \, \rangle)$ such that
\begin{enumerate}
\item $V$ is a torsion free coherent sheaf over $X$,

\item $\langle \, , \, \rangle:V\otimes V\longrightarrow \sO_X$ is a symmetric/skew-symmetric $\sO_X$-bilinear form and nondegenerate on $U \subseteq X$, where $U$ is an open subset contained in the locally free locus of $V$ with $\textrm{codim}(X \backslash U)\geq 2$.

\item $\det(V)^2 \cong \mathcal{O}_X$.
\end{enumerate}
\end{defn}
The pairing induces an $\sO_X$-linear morphism $\langle \, , \, \rangle: V\to V^{\vee}$, which is an isomorphism over $U$.
\begin{defn}
Let $(V,\langle \, , \, \rangle)$ be an orthogonal/symplectic sheaf and let $F\subset V$ be a saturated subsheaf.
\begin{enumerate}
\item We call $F^{\perp}:=\ker(V\xrightarrow{\langle \, , \, \rangle} V^{\vee}\rightarrow F^{\vee})$ the \emph{orthogonal complement} of $F$.

\item  $F$ is \emph{isotropic} if $\langle \, , \, \rangle|_{F\otimes F}=0.$ It is easy to check that $F$ is isotropic if and only if $F\subset F^{\perp}$.
\end{enumerate}
\end{defn}

A filtered orthogonal/symplectic bundle is a pair $(E,\sigma)$, where $\sigma: X \rightarrow E/P$ is a reduction of structure group. By Lemma \ref{lem_fil_G}, it is equivalent to a fiber functor ${\rm Rep}(G) \rightarrow {\rm Fil}_X$. Choosing the standard representation $G \hookrightarrow {\rm GL}(W)$, reductions of structure group of $E$ correspond to filtrations of isotropic subbundles of $V:=E \times_G W$ and we refer the reader to \cite[\S 6]{CS21} and \cite[\S 5]{Ya22} for more details. Then a sheaf version is given as follows.

\begin{defn}
An \emph{orthogonal/symplectic filtration} of an orthogonal/symplectic sheaf $(V,\langle \, , \, \rangle)$ is a filtration of saturated subsheaves
\begin{align*}
\Fil: 0=Fil_m \subset Fil_{m-1}  \subset \cdots \subset Fil_0=V
\end{align*}
such that $Fil_{i}= Fil_{m-i}^\perp$. Furthermore, a filtration of saturated subsheaves will be called a \emph{saturated filtration} for simplicity.
\end{defn}

\begin{rem}
Let $(V, \langle \, , \, \rangle)$ be an orthogonal/symplectic sheaf together with an orthogonal/symplectic filtration ${\rm Fil}$. There is a natural orthogonal/symplectic structure on the graded bundle
$\Gr_{\Fil}(V):=\mathop{\bigoplus}\limits_{i=0}^{m-1}\frac{Fil_i} {Fil_{i+1}}$ induced by that on
\begin{align*}
    \frac{Fil_i}{Fil_{i+1}} \oplus \frac{Fil_{m-i-1}}{Fil_{m-i}} \cong \frac{Fil_{i}}{Fil_{i+1}} \oplus \frac{Fil^\perp_{i+1}}{Fil^\perp_{i}},
\end{align*}
which is denoted by $\overline{\langle \, , \, \rangle}$. Then, we obtain a graded orthogonal/symplectic sheaf $(\Gr_{\Fil}(V) , \overline{\langle \, , \, \rangle})$. Moreover, in the case of orthogonal/symplectic bundles, the graded orthogonal/symplectic bundle $({\rm Gr}_{\Fil}(V), \overline{\langle \, , \, \rangle})$ we obtain is exactly the restriction of the corresponding orthogonal/symplectic Rees bundle to $t=0$ as given in Remark \ref{rem_graded}.
\end{rem}

\subsection{Logarithmic Orthogonal/Symplectic $\lambda$-connections}\label{subsect_log_lambda_conn}

\begin{defn}\label{defn_lambda_conn}
Let $(V,\langle \, , \, \rangle)$ be an orthogonal/symplectic sheaf on $X$. Let $\lambda\in k$. A \emph{logarithmic orthogonal/symplectic $\lambda$-connection} is a $k$-linear map $\nabla: V \rightarrow V\otimes \Omega_X(\log D)$ such that on each open subset $U \subseteq X$, it satisfies
\begin{enumerate}
\item \textbf{Leibniz rule}:
\begin{align*}
    \nabla (fa)=\lambda a\otimes df+f\nabla(a),
\end{align*}
where $f\in \mathcal{O}_X(U)$, $a \in V(U)$ and $d: \mathcal{O}_X \rightarrow \mathcal{O}_X$ is the exterior differential;

\item \textbf{Compatibility}:
\begin{align*}
    \langle \nabla(a),b \rangle+\langle a,\nabla(b) \rangle=\lambda d\langle a,b \rangle,
\end{align*}
where $a,b \in V(U)$.
\end{enumerate}
In this paper, a logarithmic orthogonal/symplectic $\lambda$-connection will be called an \emph{orthogonal/symplectic $\lambda$-connection} for simplicity.
\end{defn}

\begin{rem}
Let $(V,\langle \, , \, \rangle,\nabla)$ be an orthogonal/symplectic sheaf together with an orthogonal/symplectic $\lambda$-connection.		
\begin{enumerate}
\item When $\lambda=1$, we say that $\nabla$ is an \emph{orthogonal/symplectic connection}.
			
\item When $\lambda=0$, $\nabla$ is called an \emph{orthogonal/symplectic Higgs field}. In this case, we prefer to use the notation $\theta := \nabla$, and the pair $(V,\theta)$ is called an \emph{orthogonal/symplectic Higgs bundle}. Here we abuse the terminology by ignoring the integrability condition on $\theta$.
\end{enumerate}
\end{rem}

Next, we introduce Hodge filtrations for $\lambda$-connections in the orthogonal/symplectic case.
\begin{defn}\label{defn_na_Hod_fil}
Given a triple $(V,\langle \, , \, \rangle,\nabla)$ as above, let $\Fil$
\begin{align*}
\Fil: 0=Fil_m \subset Fil_{m-1}  \subset \cdots \subset Fil_0=V
\end{align*}
be an orthogonal/symplectic filtration of $V$. We say that $\Fil $ is an \emph{orthogonal/symplectic Hodge filtration} if $\Fil $ satisfies the \emph{Griffiths transversality condition}:
$$\nabla(Fil_{i+1})\subset Fil_{i}\otimes \Omega_X(\log D), \ i \geq 1.$$
Such a tuple $(V , \langle \, , \, \rangle, \nabla, {\rm Fil})$ is called a \emph{Hodge filtered orthogonal/symplectic $\lambda$-connection}.
\end{defn}

Let $(V, \langle \, , \, \rangle ,\nabla, {\rm Fil})$ be a Hodge filtered orthogonal/symplectic $\lambda$-connection. The Rees construction gives a sheaf
\begin{align*}
    \mathcal{V}= \sum t^{-i} Fil_{i} \otimes \mathcal{O}_{X \times \mathbb{A}^1}
\end{align*}
on $X \times \mathbb{A}^1$, and $t \nabla$ is well-defined on $\mathcal{V}$. Taking the limit $\lim\limits_{t \rightarrow 0} (V,t\nabla)$, we obtain a well-defined Higgs field $\theta$ on ${\rm Gr}_{\rm Fil}(V)$. In conclusion, we obtain an orthogonal/symplectic Higgs sheaf $({\rm Gr}_{\rm Fil}(V), \overline{\langle \, , \rangle}, \theta)$ such that
\begin{align*}
    \theta( \frac{Fil_i}{Fil_{i+1}} ) \subseteq \frac{Fil_{i-1}}{Fil_{i}} \otimes \Omega_X({\rm log} \, D), \ i \geq 1.
\end{align*}
Note that ${\rm Gr}_{\rm Fil}(V)$ is torsion-free because the filtration we choose is saturated. This observation gives the following definition.

\begin{defn}
A \emph{graded orthogonal/symplectic Higgs sheaf} is an orthogonal/symplectic Higgs sheaf $(E, \langle \, , \, \rangle,\theta)$ together with a decomposition  $E=\mathop{\bigoplus}\limits_{i=1}^{m} E_i$ such that
\begin{enumerate}
\item $\theta(E_i)\subset E_{i+1}\otimes \Omega_X(\log D)$ for $ 1\leq i\leq m-1$,
			
\item $\langle \, , \, \rangle|_{E_i\otimes E_j}$ is nondegenerate for $i+j=m+1$ over a nonempty open subset in the locally free locus whose complement is of codimension $\geq 2$,
			
\item $\langle \, , \, \rangle|_{E_i\otimes E_j}=0$ for $i+j\neq m+1$.
\end{enumerate}
\end{defn}

Summing up, as in the case of vector bundles \cite{S10}, one gets a graded orthogonal/symplectic Higgs sheaf from a Hodge filtered orthogonal/symplectic $\lambda$-connection:

\begin{lem}
Let $(V, \langle \, , \, \rangle ,\nabla, {\rm Fil})$ be a Hodge filtered orthogonal/symplectic $\lambda$-connection. It induces a graded orthogonal/symplectic Higgs sheaf $(E,\overline{\langle \, , \, \rangle},\theta)$, where $E:={\rm Gr}_{\rm Fil}(V)$.
\end{lem}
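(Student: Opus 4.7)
The plan is to pass $\nabla$ to the graded pieces via Griffiths transversality, use the Leibniz rule to force $\mathcal{O}_X$-linearity of the induced map, and then verify the orthogonal/symplectic compatibility by invoking the $\lambda$-compatibility of $\nabla$ on pairs of lifts whose original pairing already vanishes.

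First I would reindex the graded bundle by setting $E_j := Fil_{m-j}/Fil_{m-j+1}$ for $j = 1, \ldots, m$, matching the grading convention of the definition of a graded orthogonal/symplectic Higgs sheaf. For $1 \leq j \leq m-1$, Griffiths transversality gives $\nabla(Fil_{m-j}) \subset Fil_{m-j-1} \otimes \Omega_X(\log D)$, so composing $\nabla$ on a local lift with the projection onto $E_{j+1} \otimes \Omega_X(\log D)$ defines a map $\theta_j: E_j \to E_{j+1} \otimes \Omega_X(\log D)$. The key observation is that the Leibniz term $\lambda\, \tilde a \otimes df$ always lies in $Fil_{m-j} \otimes \Omega_X(\log D)$ and hence dies in the quotient; this simultaneously gives independence of the chosen lift and $\mathcal{O}_X$-linearity of $\theta_j$. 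Setting $\theta := \bigoplus_j \theta_j$ then produces a Higgs field on $E$ with the required degree shift.

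Next I would check the compatibility $\overline{\langle \theta(a), b\rangle} + \overline{\langle a, \theta(b)\rangle} = 0$. Taking $a \in E_i$ and $b \in E_j$ with lifts $\tilde a \in Fil_{m-i}$, $\tilde b \in Fil_{m-j}$, the pairing lands nontrivially in the graded sense only when $j = m-i$; in that case $\tilde a \in Fil_{m-i} = Fil_i^\perp$ and $\tilde b \in Fil_i$, so $\langle \tilde a, \tilde b\rangle = 0$ on the nose. The $\lambda$-compatibility identity $\langle \nabla \tilde a, \tilde b\rangle + \langle \tilde a, \nabla \tilde b\rangle = \lambda\, d\langle \tilde a, \tilde b\rangle$ therefore collapses to the Higgs-type identity, and by Griffiths transversality its two terms descend exactly to $\overline{\langle \theta(a), b\rangle}$ and $\overline{\langle a, \theta(b)\rangle}$ on the graded sheaf.

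Finally I would verify the three axioms of a graded orthogonal/symplectic Higgs sheaf: torsion-freeness of each $E_j$ is automatic from saturation of $\Fil$; the induced pairing between $E_i$ and $E_{m-i+1}$ is the quotient of $Fil_{m-i} \times Fil_{i-1} \to \mathcal{O}_X$, which is well-defined precisely because of the relation $Fil_i^\perp = Fil_{m-i}$, and is perfect on the open locus where $\langle \, , \, \rangle$ is nondegenerate and $V$ is locally free; the vanishing of $\overline{\langle \, , \, \rangle}|_{E_i \otimes E_j}$ for $i + j \neq m+1$ reduces to containment of one lift in the orthogonal of the other. The main difficulty is not analytic but organizational, namely keeping the two indexing conventions (filtered versus graded) straight and tracking how the $\perp$-relation makes the descent of both $\theta$ and $\overline{\langle \, , \, \rangle}$ well-defined; once this bookkeeping is in place, no further input beyond the preceding definitions is needed.
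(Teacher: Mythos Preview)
Your proposal is correct. The paper does not give a separate proof of this lemma: it is stated as a summary of the paragraph immediately preceding it, where the Higgs field $\theta$ is obtained by forming the Rees sheaf $\mathcal{V}=\sum t^{-i}Fil_i\otimes\mathcal{O}_{X\times\mathbb{A}^1}$, observing that $t\nabla$ is well-defined on $\mathcal{V}$, and restricting to $t=0$.

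Your direct verification is exactly what that Rees limit amounts to when unwound: multiplying by $t$ and setting $t=0$ is the mechanism that kills the Leibniz term $\lambda\,\tilde a\otimes df$ in the quotient, which you check by hand, and the compatibility of $\overline{\langle\,,\,\rangle}$ with $\theta$ is implicit in the paper's assertion that one obtains an orthogonal/symplectic Higgs sheaf. The only cosmetic difference is that the paper packages the well-definedness and $\mathcal{O}_X$-linearity into the single geometric statement that $t\nabla$ extends across $t=0$, whereas you verify each piece explicitly on lifts; neither approach requires any input beyond the definitions already in place.
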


We would like to remind the reader that in the above lemma, the $\lambda$-connection $\nabla$ could be a connection or a Higgs field.

\subsection{Stability Condition and Harder--Narasimhan Filtration}\label{subsect_stab_cond}
We fix an ample line bundle $L$ over $X$. Let $V$ be a torsion free sheaf on $X$. Define $\mu_L(V):=\deg_L(V)/\rk(V)$ to be the slope of $V$ with respect to $L$. In this paper, if there is no ambiguity, we omit the subscript $L$ and use the notation $\mu$ and $\deg$ for slope and degree respectively. In this subsection, a triple $(V,\langle \, , \, \rangle,\nabla)$ is an orthogonal/symplectic sheaf $(V,\langle \, , \, \rangle)$ together with an orthogonal/symplectic $\lambda$-conneciton $\nabla$ unless stated otherwise.
	
\begin{defn}
A triple $(V,\langle \, , \, \rangle,\nabla)$ is \emph{semistable} (resp. \emph{stable}) if for any nontrivial $\nabla$-invariant isotropic subsheaf $W \subset V$, we have $\mu(W)\leqslant 0$ (resp. $<$).
\end{defn}

\begin{rem}
In characteristic zero, this stability condition is equivalent to Ramanathan's stability condition on the corresponding principal bundles \cite{Ra75,Ra96a,Ra96b}. We briefly review the correspondence as follows. A $G$-bundle $E$ is $R$-semistable ($R$-stable) if for any proper parabolic subgroup $P \subseteq G$, any reduction of structure group $\sigma: X \rightarrow E/P$ and any dominant character $\chi: P \rightarrow \mathbb{G}_m$, we have $\deg \chi_* E_\sigma \leq 0$ (resp. $<$), where $E_\sigma := E \times_{\sigma} X$ is a $P$-bundle on $X$ and $\chi_* E_\sigma$ is a line bundle on $X$. When we consider the case of orthogonal/symplectic groups, reductions of structure group correspond exactly to filtrations of isotropic subbundles. Following this idea, the equivalence of the stability conditions for symplectic groups is proven in \cite[Appendix]{KSZ21}, and the same argument holds for orthogonal groups.

In positive characteristic, the $R$-stability condition of $E$ is equivalent to the stability condition of the associated bundle $V$ under a low weight representation $G \hookrightarrow {\rm SL}(W)$ \cite[\S 2]{BP03}. In our case, the representation we take is the standard one, which is obvious of low weight. Therefore, the stability condition considered in this paper is equivalent to the Ramanathan's stability condition for principal bundles in positive characteristic. We give a brief proof in Proposition \ref{prop_HN_fil}.
\end{rem}

\begin{prop}\label{prop_HN_fil}
Given a triple $(V,\langle \, , \, \rangle,\nabla)$, let $$\mathrm{HN}:0=V_0\subset V_1 \subset \cdots \subset V_t=V$$ be the Harder--Narasimhan filtration of $(V, \nabla)$, that is, we forget the orthogonal/symplectic structure of $V$. Then $\mathrm{HN}$ is an orthogonal/symplectic filtration.
\end{prop}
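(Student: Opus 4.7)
The plan is to exploit the uniqueness of the Harder--Narasimhan filtration among $\nabla$-invariant saturated filtrations of $V$: I will show that the collection $W_j := V_{t-j}^{\perp}$ ($0\le j \le t$) is again a Harder--Narasimhan filtration of $(V,\nabla)$, so by uniqueness $W_j = V_j$, which is exactly the orthogonality condition $V_i^{\perp} = V_{t-i}$ defining an orthogonal/symplectic filtration.

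The first step is to verify that taking orthogonal complements preserves $\nabla$-invariance. For any $\nabla$-invariant saturated subsheaf $F\subset V$ and local sections $a\in F$, $b\in F^{\perp}$, the compatibility axiom of Definition \ref{defn_lambda_conn} gives
\[ \langle a,\nabla b\rangle \;=\; \lambda\, d\langle a,b\rangle - \langle\nabla a,b\rangle \;=\; 0, \]
because $\langle a,b\rangle = 0$ and $\nabla a\in F\otimes\Omega_X(\log D)$ pairs trivially with $b$. Thus every $W_j$ is $\nabla$-invariant. Next I would identify the successive quotients: on the open locus where $\langle \, , \, \rangle: V\to V^{\vee}$ is an isomorphism (complement of codimension $\ge 2$), the pairing induces a natural isomorphism $V_{t-j-1}^{\perp}/V_{t-j}^{\perp} \cong (V_{t-j}/V_{t-j-1})^{\vee}$ respecting the induced $\lambda$-connection, and this identification extends by saturation. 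In particular each $W_{j+1}/W_j$ is $\nabla$-semistable (duals of $\nabla$-semistable sheaves are $\nabla$-semistable) of slope $-\mu(V_{t-j}/V_{t-j-1})$.

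Writing $\mu_i := \mu(V_i/V_{i-1})$, the HN property gives $\mu_1 > \mu_2 > \cdots > \mu_t$, so the slopes $-\mu_t > -\mu_{t-1} > \cdots > -\mu_1$ of the successive $W_{j+1}/W_j$ are strictly decreasing in $j$. Hence $\{W_j\}$ satisfies both defining properties of the HN filtration of $(V,\nabla)$, and uniqueness forces $W_j = V_j$. The main technical point I anticipate is the codimension bookkeeping: the pairing is only nondegenerate away from a codimension $\ge 2$ locus and the $V_i$ need only be saturated (not reflexive), so the quotient isomorphism above must first be set up on the locally free locus of $V$ and then extended. This causes no damage because slopes, semistability, and the HN filtration itself are invariants of the codimension-one geometry.
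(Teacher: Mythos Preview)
Your proposal is correct and follows exactly the paper's approach: the paper also observes that $\mathrm{HN}^\perp: 0=V_t^\perp\subset V_{t-1}^\perp\subset\cdots\subset V_0^\perp=V$ is again a Harder--Narasimhan filtration of $(V,\nabla)$ and invokes uniqueness to conclude $V_{t-i}=V_i^\perp$. You simply supply the details (compatibility of $\nabla$ with $\perp$, the dual identification of the graded pieces, and the codimension bookkeeping) that the paper leaves implicit.
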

	
\begin{proof}
In fact, $$\mathrm{HN}^\perp:0=V_t^\perp\subset V_{t-1}^\perp  \subset \cdots \subset V_0^\perp=V$$ is again a Harder--Narasimhan filtration of $(V, \nabla)$. By the uniqueness of the Harder--Narasimhan filtration, we have $V_{t-i}=V_{i}^\perp$.
\end{proof}

As a direct consequence, we have the following corollary.
\begin{cor}
Suppose that $(V,\langle \, , \, \rangle,\nabla)$ is unstable. Then the maximal destabilizer of $(V, \nabla)$ is isotropic. Furthermore, $(V,\langle \, , \, \rangle,\nabla)$ is semistable if and only if $(V, \nabla)$ is semistable.
\end{cor}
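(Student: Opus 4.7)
The corollary has two assertions, and both will follow cleanly from Proposition~\ref{prop_HN_fil} together with the numerical constraint that the total degree is zero. My plan is to run the argument through the Harder--Narasimhan filtration of the underlying $\nabla$-module, using the self-duality supplied by the proposition.

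First I would record the basic numerical fact: since $(V,\langle\,,\,\rangle)$ is orthogonal/symplectic, $\det(V)^{\otimes 2}\cong \sO_X$, so $\deg_L(V)=0$ and $\mu_L(V)=0$. Now suppose $(V,\langle\,,\,\rangle,\nabla)$ is unstable. By definition this supplies a nontrivial $\nabla$-invariant isotropic subsheaf $W\subset V$ with $\mu(W)>0$, hence $(V,\nabla)$ is also unstable as a mere $\lambda$-connection. Let
\[
\mathrm{HN}:\ 0=V_0\subset V_1\subset \cdots\subset V_t=V
\]
be its Harder--Narasimhan filtration, with $t\geq 2$ and maximal destabilizer $V_1$ of slope $\mu(V_1)>0$. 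Proposition~\ref{prop_HN_fil} tells us that $\mathrm{HN}$ is an orthogonal/symplectic filtration, i.e.\ $V_{t-i}=V_i^{\perp}$ for all $i$; taking $i=1$ gives $V_1^{\perp}=V_{t-1}$. Because $t\geq 2$, we have $V_1\subset V_{t-1}=V_1^{\perp}$, so $V_1$ is isotropic. This yields the first assertion.

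For the second assertion, one direction is immediate from the definitions: if $(V,\nabla)$ is semistable, then every $\nabla$-invariant subsheaf $W$ satisfies $\mu(W)\leq \mu(V)=0$, so in particular every isotropic $\nabla$-invariant subsheaf has nonpositive slope, and $(V,\langle\,,\,\rangle,\nabla)$ is semistable. The converse is the contrapositive of the first assertion: if $(V,\nabla)$ fails to be semistable, then by the argument above $V_1$ is a nontrivial $\nabla$-invariant isotropic subsheaf with $\mu(V_1)>0$, so $(V,\langle\,,\,\rangle,\nabla)$ is unstable. No further work is required.

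The only point at which care is needed is the step $t\geq 2$: one must rule out $t=1$, which is precisely the statement that $(V,\nabla)$ is unstable, already secured at the outset from the hypothesis that $(V,\langle\,,\,\rangle,\nabla)$ is unstable together with $\mu(V)=0$. I do not expect any genuine obstacle; the entire argument reduces to combining Proposition~\ref{prop_HN_fil} with the fact that a nondegenerate orthogonal/symplectic sheaf has degree zero.
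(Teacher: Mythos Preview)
Your argument is correct and is exactly the kind of unpacking the paper intends: the corollary is stated as a direct consequence of Proposition~\ref{prop_HN_fil}, and you have supplied precisely those details, namely that the hypothesis forces $t\geq 2$ in the Harder--Narasimhan filtration, whence $V_1\subset V_{t-1}=V_1^{\perp}$, together with the easy biconditional. There is nothing to add.
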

	
\begin{defn} \label{gr-semistable}
Let $\Fil$ be an orthogonal/symplectic Hodge filtration on $(V , \langle \, ,\, \rangle,\nabla)$. We say that $\Fil$ is \textbf{gr-semistable} if the associated graded orthgonal/symplectic Higgs sheaf $(\Gr_{\Fil}(V), \overline{\langle \, , \, \rangle},\theta)$ is semistable. Moreover, $(V, \langle \, , \, \rangle, \nabla)$ is \emph{gr-semistable} if there exists a gr-semistable filtration $\Fil$ of $(V , \langle \, ,\, \rangle,\nabla)$.
\end{defn}
	
One may ask that whether there always exists a  gr-semistable filtration on a semistable triple $(V , \langle \, ,\, \rangle,\nabla)$, where $\nabla$ is a connection? We will give an answer to this problem in the next sections.

\section{Quasi Gr-semistable Filtration: Odd Rank}\label{sect_odd}

Let $(V, \langle \, , \, \rangle, \nabla)$ be an orthogonal sheaf $(V, \langle \, , \, \rangle)$ together with an orthogonal $\lambda$-connection $\nabla$, where $V$ is a torsion free sheaf of odd rank. In this section, we introduce a special type of orthogonal Hodge filtrations, which is called \emph{quasi gr-semistable filtrations} (Definition \ref{defn_gr_semi_odd}), and prove that the existence of gr-semistable filtrations is equivalent to the existence of quasi gr-semistable filtrations of $(V, \langle \, , \, \rangle, \nabla)$ (Theorem \ref{thm_gr_and_quasi_gr_odd}).

\subsection{Definition and Basic Properties}
	
\begin{defn}[\textbf{Quasi gr-semistable filtration in odd rank case}]\label{defn_gr_semi_odd}
Let $$\Fil: 0= L_0 \subsetneq L_1 \subsetneq \cdots \subsetneq L_m \subsetneq L_m^\perp\subsetneq\cdots \subsetneq L_1^\perp\subsetneq L_0^\perp= V$$ be an orthogonal Hodge filtration of $(V, \langle \, , \, \rangle, \nabla)$. We say that $\Fil$ is \emph{quasi gr-semistable}  if   it satisfies the following two conditions:
		
\begin{enumerate}
\item[ I.] For any saturated Griffiths transverse filtration $$0=W_0\subset W_1 \subset \cdots\subset W_m$$ such that $L_{i-1}\subset W_i\subset L_i$, we have
$$\sum_{0\leq i\leq m}\deg(W_i)\leqslant\sum_{0\leq i\leq m}\deg(L_i).$$
			
\item[II.] For any saturated Griffiths transverse filtration $$0\subset S_1 \subset \cdots\subset S_m\subset S_{m+1}\subset T_m^\perp\subset\cdots \subset T_1^\perp\subset V$$ such that  $$\deg(S_{m+1})>\sum_{0\leq i\leq m}\big(\deg(L_i)-\deg(S_i)\big)+\sum_{0\leq i\leq m}\big(\deg(L_i)-\deg(T_i)\big),$$ we have
$$\nabla(S_{m+1})\subseteq S_{m+1}^\perp \otimes\Omega_X(\log D),$$ where $L_{i-1}\subset S_i \subset T_i \subset L_i,\  L_m\subset S_{m+1}\subset L_m^\perp$ and $S_{m+1}$ is isotropic.
\end{enumerate}
Moreover, we say that $(V,\langle \, , \, \rangle,\nabla)$ is \emph{quasi gr-semistable} (or $\nabla$ is a \emph{quasi gr-semistable connection}) if there exists a quasi gr-semistable filtration on $V$.
\end{defn}

\begin{examp}\label{smallm}
In this example, we briefly explain the quasi gr-semistability condition for small $m$.

\begin{itemize}
\item  $m=0$.
			
The trivial filtration $$\Fil:0\subset V$$ is  quasi gr-semistable if and only if $\nabla(W)\subset W^\perp\otimes\Omega_X(\log D)$ holds for any isotropic saturated subsheaf $W$ of $\deg(W)>0$.
			
\item  $m=1$.
			
An orthogonal Hodge filtration $0\subset N\subset N^\perp\subset V$ is  quasi gr-semistable if and only if it satisfies the following two conditions:
\begin{enumerate}
\item $\mu_{\min}(N)\geq 0$;
\item for any saturated Griffiths transverse flitration $0\subset S_1\subset S_2\subset T_1^\perp$ such that
\begin{itemize}
    \item[$\bullet$] $S_1\subset T_1\subset N,\ N\subset S_2\subset N^\perp $,
    \item[$\bullet$] $S_2$ is isotropic,
    \item[$\bullet$] $\deg(S_2)>\deg(N)-\deg(S_1)+\deg(N)-\deg(T_1)$,
\end{itemize}
we have $\nabla(S_2)\subset S_2^\perp\otimes \Omega_X(\log D).$
\end{enumerate}
\end{itemize}
When $m=0$, quasi gr-semistability is easy to check. However, when $m \geq 1$, the stability condition becomes much more complicated. The above discussion will be used in \S\ref{sect_small_rank}.
\end{examp}

\begin{rem}\label{explainregular}
In this remark, we briefly explain Condition I. and II. in terms of graded Higgs sheaves. Given $(V, \langle \, , \, \rangle, \nabla, \Fil)$, let $(\Gr_{\Fil}, \overline{\langle \, , \, \rangle}, \theta)$ be the corresponding graded orthogonal Higgs sheaf, where
\begin{align*}
    \Gr_{\Fil}(V)=\mathop{\bigoplus}\limits_{i=1}^{m} \frac{L_i}{L_{i-1}}\oplus \frac{L_m^\perp}{L_m}\oplus \mathop{\bigoplus}\limits_{i=1}^{m} \frac{L_{m-i}^\perp}{L_{m-i+1}^\perp}.
\end{align*}

\begin{enumerate}
\item The filtration $$0=W_0\subset W_1 \subset \cdots\subset W_m$$ in Condition I. gives a Higgs subsheaf of $\Gr_{\Fil}(V)$ $$\alpha(W.):=\mathop{\bigoplus}\limits_{i=1}^{m} \frac{W_i}{L_{i-1}}\oplus \frac{L_m^\perp}{L_m}\oplus \mathop{\bigoplus}\limits_{i=1}^{m} \frac{L_{m-i}^\perp}{L_{m-i+1}^\perp}.$$
Then the inequality $$\sum_{0\leq i\leq m}\deg(W_i)\leqslant\sum_{0\leq i\leq m}\deg(L_i).$$ is equivalent to $\deg\big(\alpha(W.)\big)\leq 0.$		
\item The filtration $$0\subset S_1 \subset \cdots\subset S_m\subset S_{m+1}\subset T_m^\perp\subset\cdots \subset T_1^\perp\subset V$$ in Condition II. defines an isotropic Higgs subsheaf of $\Gr_{\Fil}(V)$
$$\beta(S.,T.):=\mathop{\bigoplus}\limits_{i=1}^{m} \frac{S_i}{L_{i-1}}\oplus \frac{S_{m+1}}{L_m}\oplus \mathop{\bigoplus}\limits_{i=1}^{m} \frac{T_{m-i+1}^\perp}{L_{m-i+1}^\perp}.$$
Then the inequality $$\deg(S_{m+1})>\sum_{0\leq i\leq m}\big(\deg(L_i)-\deg(S_i)\big)+\sum_{0\leq i\leq m}\big(\deg(L_i)-\deg(T_i)\big)$$ is equivalent to $\deg\big(\beta(S.,T.)\big)>0$.
\end{enumerate}
Summing up, $\Fil$ is quasi gr-semistable if and only if the following two conditions hold:
\begin{enumerate}
\item $\deg\big(\alpha(W.)\big)\leq 0$;
			
\item if $\deg\big(\beta(S.,T.)\big)>0$, then we have $\nabla(S_{m+1})\subseteq S_{m+1}^\perp \otimes\Omega_X(\log D)$.
\end{enumerate}
\end{rem}
	
\begin{prop}\label{ssregular}
If an orthogonal Hodge filtration $\Fil $ is gr-semistable of $(V,\langle \, , \, \rangle,\nabla)$, then $\Fil$ is quasi gr-semistable.
\end{prop}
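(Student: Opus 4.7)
The plan is to use Remark~\ref{explainregular} to reformulate Conditions I and II as degree inequalities for the natural subsheaves $\alpha(W.)$ and $\beta(S.,T.)$ of the associated graded orthogonal Higgs sheaf $(\Gr_{\Fil}(V),\overline{\langle\,,\,\rangle},\theta)$, and then apply semistability. Because $\Fil$ is gr-semistable, $\Gr_{\Fil}(V)$ is semistable as an orthogonal Higgs sheaf, and by the corollary following Proposition~\ref{prop_HN_fil} applied to $(\Gr_{\Fil}(V),\theta)$ it is also semistable when one forgets the orthogonal structure. Since $\det(V)^{2}\cong \sO_{X}$ gives $\mu(\Gr_{\Fil}(V))=0$, every nonzero Higgs subsheaf of $\Gr_{\Fil}(V)$ has non-positive slope; in particular the same holds for every nonzero isotropic Higgs subsheaf.

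For Condition I, given a saturated Griffiths transverse filtration $W.$ with $L_{i-1}\subset W_{i}\subset L_{i}$, I would verify that $\alpha(W.)$ is a $\theta$-invariant subsheaf of $\Gr_{\Fil}(V)$. The ``upper half'' summands $L_{m}^{\perp}/L_{m}$ and $L_{m-i}^{\perp}/L_{m-i+1}^{\perp}$ inside $\alpha(W.)$ are full graded pieces of $\Fil$, hence are preserved by $\theta$ thanks to Griffiths transversality of $\Fil$ itself. For $1\leq i\leq m-1$, Griffiths transversality of $W.$ in the form $\nabla(W_{i})\subset W_{i+1}\otimes\Omega_{X}(\log D)$, combined with the nesting $L_{i}\subset W_{i+1}\subset L_{i+1}$, yields $\theta(W_{i}/L_{i-1})\subset W_{i+1}/L_{i}\otimes \Omega_{X}(\log D)$. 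At $i=m$, the image $\theta(W_{m}/L_{m-1})$ lies automatically in the full piece $L_{m}^{\perp}/L_{m}\otimes \Omega_{X}(\log D)\subset \alpha(W.)\otimes \Omega_{X}(\log D)$. Semistability of $\Gr_{\Fil}(V)$ then gives $\deg(\alpha(W.))\leq 0$, which by Remark~\ref{explainregular} is exactly Condition I.

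For Condition II I would argue by contradiction: suppose a saturated Griffiths transverse filtration $(S.,T.)$ satisfies the stated degree inequality, so that by Remark~\ref{explainregular}, $\deg(\beta(S.,T.))>0$. I claim $\beta(S.,T.)$ is an isotropic Higgs subsheaf of $\Gr_{\Fil}(V)$. Isotropy: under $\overline{\langle\,,\,\rangle}$ the piece $S_{i}/L_{i-1}$ is paired with $T_{i}^{\perp}/L_{i}^{\perp}$, and this pairing vanishes because $S_{i}\subset T_{i}$ forces $\langle S_{i},T_{i}^{\perp}\rangle=0$; the middle piece $S_{m+1}/L_{m}\subset L_{m}^{\perp}/L_{m}$ is isotropic because $S_{m+1}$ is. Higgs invariance: Griffiths transversality of the refined filtration unpacks as $\nabla(S_{i})\subset S_{i+1}\otimes \Omega_{X}(\log D)$, $\nabla(S_{m+1})\subset T_{m}^{\perp}\otimes \Omega_{X}(\log D)$, and $\nabla(T_{j}^{\perp})\subset T_{j-1}^{\perp}\otimes \Omega_{X}(\log D)$, which together with the nestings $L_{i}\subset S_{i+1}$, $L_{m}\subset S_{m+1}$, $L_{m}^{\perp}\subset T_{m}^{\perp}$, and $T_{j}^{\perp}\subset L_{j-1}^{\perp}$ force $\theta(\beta(S.,T.))\subset \beta(S.,T.)\otimes \Omega_{X}(\log D)$. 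Semistability then forces $\deg(\beta(S.,T.))\leq 0$, a contradiction; so no such $(S.,T.)$ exists and Condition II is satisfied vacuously. The main technical point is the bookkeeping at the ``middle'' of the filtration, where $\theta$ passes through the self-dual piece $L_{m}^{\perp}/L_{m}$ from the isotropic half of $\Fil$ to its dual half---once this transition is handled cleanly, everything else is a direct translation from the remark followed by semistability of $\Gr_{\Fil}(V)$.
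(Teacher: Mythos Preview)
Your proposal is correct and follows essentially the same approach as the paper: translate Conditions I and II via Remark~\ref{explainregular} into the inequalities $\deg(\alpha(W.))\leq 0$ and $\deg(\beta(S.,T.))\leq 0$, use Proposition~\ref{prop_HN_fil} (or its corollary) to pass from orthogonal semistability to semistability for all Higgs subsheaves of $(\Gr_{\Fil}(V),\theta)$, and conclude that Condition II holds vacuously. The only difference is that you spell out explicitly why $\alpha(W.)$ is $\theta$-invariant and why $\beta(S.,T.)$ is isotropic and $\theta$-invariant, whereas the paper simply cites Remark~\ref{explainregular} for these facts.
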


\begin{proof}
The semistability of $\big(\Gr_{\Fil}(V), \overline{\langle \, , \, \rangle}, \theta\big)$ implies that $\deg(W)\leq 0$ holds for any isotropic Higgs subsheaf of $\big(\Gr_{\Fil}(V),\theta\big)$. By Prop \ref{prop_HN_fil}, $\deg(W)\leq 0$ holds for any Higgs subsheaf of $\big(\Gr_{\Fil}(V),\theta\big)$. Hence for any saturated Griffiths transverse filtration given in Condition I. and II., we have $\deg\big(\alpha(W.)\big)\leq 0$ and $\deg\big(\beta(S.,T.)\big)\leq0$. Then the first condition in Remark \ref{explainregular} is satisfied automatically, and the second condition also holds because there is no $\beta(S.,T.)$ such that $\deg( \beta(S.,T.)) > 0$.
\end{proof}
	
\begin{prop}\label{positive}
Let  $$\Fil: 0= L_0 \subsetneq L_1 \subsetneq \cdots \subsetneq L_m \subsetneq L_m^\perp\subsetneq\cdots  \subsetneq L_1^\perp\subsetneq L_0^\perp= V$$ be a nontrivial quasi gr-semistable filtration of $(V, \langle \, , \, \rangle, \nabla)$. Then we have $\deg(L_i)\geq 0$. Moreover  $\deg(F)\leq \deg(L_1)$ holds for any subsheaf $F\subset L_1.$
\end{prop}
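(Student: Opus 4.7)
The plan is to apply Condition I of Definition \ref{defn_gr_semi_odd} to carefully chosen saturated Griffiths transverse refinements $W_\bullet$ of $L_\bullet$. The idea is to design $W_\bullet$ so that the difference $\sum_{i=0}^{m}\deg(L_i) - \sum_{i=0}^{m}\deg(W_i)$ isolates exactly the single degree one wishes to bound; Condition I then delivers the inequality for free.

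For the first assertion, fix $j\in\{1,\dots,m\}$ and set
\[
W_0:=0,\qquad W_i:=L_{i-1}\ \text{for}\ 1\leq i\leq j,\qquad W_i:=L_i\ \text{for}\ j< i\leq m.
\]
Each $W_i$ equals some $L_k$, so $W_\bullet$ is a saturated increasing filtration satisfying $L_{i-1}\subset W_i\subset L_i$, and a direct bookkeeping gives
$\sum_{i=0}^{m}\deg(L_i)-\sum_{i=0}^{m}\deg(W_i)=\deg(L_j)$. Condition I then yields $\deg(L_j)\geq 0$.

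For the second assertion, I would replace $F$ by its saturation in $V$, which still lies in $L_1$ (since $L_1$ is itself saturated) and has degree at least $\deg(F)$; so we may assume $F$ is saturated. Set $W_1:=F$ and $W_i:=L_i$ for $2\leq i\leq m$. The analogous computation gives $\sum_{i=0}^{m}\deg(L_i)-\sum_{i=0}^{m}\deg(W_i)=\deg(L_1)-\deg(F)$, and Condition I gives the bound.

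The only step requiring real verification is Griffiths transversality of these refined filtrations, that is $\nabla(W_i)\subset W_{i+1}\otimes\Omega_X(\log D)$ for $i=1,\dots,m-1$. Away from the ``break points'' where $W_\bullet$ deviates from $L_\bullet$, this reduces to Griffiths transversality of $\Fil$ itself. At the break point $i=j$ in the first construction we have $W_j=L_{j-1}$ and $W_{j+1}=L_{j+1}$, and the required inclusion
\[
\nabla(L_{j-1})\subset L_j\otimes\Omega_X(\log D)\subset L_{j+1}\otimes\Omega_X(\log D)
\]
uses precisely the one-step slack in the Griffiths condition of $\Fil$ (it is trivial when $j=1$ since $L_0=0$). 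For the second construction the analogous check is $\nabla(F)\subset\nabla(L_1)\subset L_2\otimes\Omega_X(\log D)=W_2\otimes\Omega_X(\log D)$. Thus the main (mild) obstacle is simply lining up these break-point inclusions; Condition II of Definition \ref{defn_gr_semi_odd} is not needed for this proposition.
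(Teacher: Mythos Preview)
Your argument is correct and follows the same route as the paper's own proof: apply Condition~I of Definition~\ref{defn_gr_semi_odd} to the shifted filtration $W_i=L_{i-1}$ for $i\le j$, $W_i=L_i$ for $i>j$ (respectively $W_1=F$, $W_i=L_i$ for $i\ge 2$), and read off the desired inequality. Your write-up is in fact more careful than the paper's: you explicitly verify Griffiths transversality at the break point and you pass to the saturation of $F$ before applying Condition~I, whereas the paper asserts these without comment (and even contains a typo, writing $W_i=L_{i+1}$ where $W_i=L_i$ is meant).
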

	
\begin{proof}
For a fixed $1\leq i_0\leq m$, consider the following Griffiths transverse filtration
$$0=W_0\subset W_1\subset \cdots\subset W_m$$
where $W_i= L_{i-1}$ for $1\leq i\leq i_0$ and $W_i= L_{i+1}$ for $i_0+1\leq i\leq m$. Then the inequality  $$\sum_{0\leq i\leq m}\deg(W_i)\leqslant\sum_{0\leq i\leq m}\deg(L_i)$$ implies that $\deg(L_{i_0})\geq 0.$ Similarly, let $W_1=F$ and $W_i=L_i$ for $2\leq i\leq m$, and we get $\deg(F)\leq \deg(L_1)$.
\end{proof}	

\subsection{The Existence of Gr-semistable  Filtration}
	
In this subsection, we will show that given a semistable triple $(V, \langle \, , \, \rangle, \nabla)$, the quasi gr-semistability of $(V, \langle \, , \, \rangle, \nabla)$ implies the existence of gr-semistable filtrations (Theorem \ref{main thm}). Combing with Proposition \ref{ssregular}, we obtain the main result (Theorem \ref{thm_gr_and_quasi_gr_odd}) in this section. First, let us begin with a lemma about the maximal destabilizer of a graded orthogonal Higgs sheaf.
	
\begin{lem}\cite[Lemma A.6]{LSZ19}\label{destabilizer}
Let $(E=\mathop{\bigoplus}\limits_{i=1}^{m} E_i,\ \langle \, , \, \rangle,\ \theta)$ be an unstable graded orthogonal Higgs sheaf. Let $M\subset E$ be the maximal destabilizer of $(E,\theta)$. Then $M$ is an isotropic saturated graded Higgs subsheaf, that is , $M=\mathop{\bigoplus}\limits_{i=1}^{m} M_i$ with $M_i=M\cap E_i$ and the Higgs field is induced by $\theta$.
\end{lem}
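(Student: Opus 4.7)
The plan is to separate the conclusion into three pieces: (i) $M$ is saturated and $\theta$-invariant, (ii) $M$ is isotropic, and (iii) $M$ is graded. Piece (i) is automatic because, by hypothesis, $M = V_1$ is the first step of the Harder--Narasimhan filtration of the Higgs sheaf $(E,\theta)$, which is by construction saturated and $\theta$-invariant. The genuine content is therefore (ii) and (iii), which separately exploit the two extra structures on $E$: the orthogonal pairing for (ii), and the grading for (iii).

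For (ii), I would apply Proposition \ref{prop_HN_fil} with $\lambda = 0$. The corollary following that proposition implies that $(E,\langle\,,\,\rangle,\theta)$ is semistable iff the underlying Higgs sheaf $(E,\theta)$ is; so under our instability hypothesis the HN filtration $0 = V_0 \subsetneq V_1 \subsetneq \cdots \subsetneq V_t = E$ has $t \geq 2$, and $M = V_1$. Proposition \ref{prop_HN_fil} further asserts this filtration is orthogonal, i.e.\ $V_{t-i} = V_i^\perp$. Specializing to $i=1$ gives $M = V_1 \subsetneq V_{t-1} = V_1^\perp = M^\perp$, so $M$ is isotropic.

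For (iii), I would use a $\mathbb{G}_m$-action argument. Define $\psi_t \in \Aut(E)$ by $\psi_t|_{E_i} = t^i \cdot \mathrm{id}$ for $t \in \mathbb{G}_m$. The grading condition $\theta(E_i) \subset E_{i+1}\otimes \Omega_X(\log D)$ gives the conjugation identity $\psi_t \circ \theta \circ \psi_t^{-1} = t\,\theta$; consequently, for any $t \in \mathbb{G}_m(k)$, a subsheaf $F \subset E$ is $\theta$-invariant iff it is $(t\theta)$-invariant, and $\psi_t(F)$ is $\theta$-invariant iff $F$ is. Thus $\psi_t(M)$ is again a saturated Higgs subsheaf of $(E,\theta)$ with the same slope as $M$, and uniqueness of the maximal destabilizer forces $\psi_t(M) = M$ for every $t$. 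A $\mathbb{G}_m$-invariant coherent subsheaf of a graded sheaf is automatically graded, which yields $M = \bigoplus_{i=1}^{m}(M \cap E_i)$, with induced Higgs field from $\theta$.

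The main technical step I anticipate is the final implication ``$\mathbb{G}_m$-invariant $\Rightarrow$ graded'': it is carried out locally on an affine cover by expressing the projectors $\pi_i \colon E \to E_i$ as Vandermonde combinations of the operators $\psi_{t_1}, \ldots, \psi_{t_m}$ for distinct $t_j \in k^\times$ (available since $k$ is algebraically closed), so that the $\psi_t$-invariance of $M$ for all $t$ promotes to $\pi_i(M) \subseteq M$ for each $i$, forcing the direct sum decomposition.
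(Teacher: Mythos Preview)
Your argument is correct. Note first that the paper does not supply its own proof of this lemma: it is stated with a citation to \cite[Lemma A.6]{LSZ19} and then used as a black box. So there is no in-paper proof to compare against; what you have written is a self-contained justification that the authors omit.

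Two small remarks. In part (ii) you write $M = V_1 \subsetneq V_{t-1} = M^\perp$; when $t=2$ this inclusion is an equality (the Lagrangian case), so it should read $\subseteq$ rather than $\subsetneq$. This does not affect the conclusion that $M$ is isotropic. In part (iii), the step from ``$\psi_t(M)$ is a saturated $\theta$-invariant subsheaf of the same slope and rank as $M$'' to ``$\psi_t(M)=M$'' deserves one extra sentence: any $\theta$-invariant subsheaf of slope $\mu_{\max}$ is contained in the maximal destabilizer, so $\psi_t(M)\subseteq M$, and then equality follows because both are saturated of the same rank. Your Vandermonde argument for extracting the graded projectors from the $\psi_t$'s is valid in arbitrary characteristic since $k$ is infinite, so there is no issue there.

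The argument in \cite{LSZ19} (in the $\GL$ setting, without the orthogonal structure) proceeds somewhat differently for the graded part: rather than invoking a $\G_m$-action, one filters $M$ by $M\cap\bigoplus_{i\geq j}E_i$ and compares $M$ with its associated graded, which is again a Higgs subsheaf of the same slope. Your $\G_m$-equivariance approach is arguably cleaner and makes the role of the grading more transparent; it also meshes well with the Rees-bundle viewpoint emphasized in \S\ref{subsect_rees} of the present paper.
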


Now let $\Fil$
$$\Fil: 0= L_0 \subsetneq L_1 \subsetneq \cdots \subsetneq L_m \subsetneq L_m^\perp\subsetneq\cdots \subsetneq L_1^\perp\subsetneq L_0^\perp= V$$
be a quasi gr-semistable filtration of $(V,\langle \, , \, \rangle,\nabla)$, and we consider the associated orthogonal graded Higgs sheaf $(\Gr_{\Fil}(V), \overline{\langle \, , \, \rangle},\theta)$, where $$\Gr_{\Fil}(V)=\mathop{\bigoplus}\limits_{i=1}^{m} \frac{L_i}{L_{i-1}}\oplus \frac{L_m^\perp}{L_m}\oplus \mathop{\bigoplus}\limits_{i=1}^{m} \frac{L_{m-i}^\perp}{L_{m-i+1}^\perp}.$$ If $(\Gr_{\Fil}(V),\theta)$ is not semistable, then we have a maximal destabilizer and denote it by $M_{\Fil}$. By Lemma \ref{destabilizer}, we have
$$M_{\Fil}=\mathop{\bigoplus}\limits_{i=1}^{m} \frac{M_i}{L_{i-1}}\oplus \frac{M_{m+1}}{L_m}\oplus \mathop{\bigoplus}\limits_{i=1}^{m} \frac{N_{m-i+1}^\perp}{L_{m-i+1}^\perp},$$ where $L_{i-1}\subset M_i\subset N_i\subset L_i$ for $1\leq i\leq m$ and $M_{m+1}$ is an isotropic subsheaf. We construct a new filtration $\xi(\Fil)$ from $\Fil$ as follows:
$$\xi(\Fil):0=M_0\subset M_1\subset\cdots\subset M_{m+1}\subset  M_{m+1}^\perp\subset\cdots\subset M_1^\perp\subset M_0^\perp=V.$$
Moreover, $\xi$ can be regarded as an operator on the set of Griffiths transverse filtrations.
	
\begin{lem}\label{preserve}
Let $\Fil$ be a quasi gr-semistable filtration of $(V, \langle \, , \, \rangle, \nabla)$. Then $\xi(\Fil)$ is a (orthogonal) Hodge filtration. Moreover, $\xi(\Fil)$ is also quasi gr-semistable.
\end{lem}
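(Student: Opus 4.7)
My plan is to split the lemma into two parts: first, to verify that $\xi(\Fil)$ is a (orthogonal) Hodge filtration; second, to establish the two conditions of quasi gr-semistability for $\xi(\Fil)$.

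\textbf{Part 1 (Hodge filtration).} That $\xi(\Fil)$ is a well-defined orthogonal filtration follows from assembling the given data: the chain $M_i\subset M_{i+1}$ comes from $M_i\subset L_i\subset M_{i+1}$, where $L_i\subset M_{i+1}$ is forced by the fact that $M_i/L_{i-1}$ (respectively $M_{m+1}/L_m$) is a subsheaf of $L_i/L_{i-1}$ (respectively of $L_m^\perp/L_m$), and isotropy of $M_{m+1}$ gives $M_{m+1}\subset M_{m+1}^{\perp}$. For Griffiths transversality on the isotropic half, I combine the transversality of $\Fil$, $\nabla(L_i)\subset L_{i+1}\otimes \Omega_X(\log D)$, with the hypothesis $\theta(M_i/L_{i-1})\subset M_{i+1}/L_i\otimes \Omega_X(\log D)$ coming from $M_\Fil$ being a graded Higgs subsheaf of $\Gr_{\Fil}(V)$; lifting the latter to $V$ (modulo $L_i\subset M_{i+1}$) yields $\nabla(M_i)\subset M_{i+1}\otimes \Omega_X(\log D)$, and similarly at the middle step $\nabla(M_{m+1})\subset M_{m+1}^{\perp}\otimes \Omega_X(\log D)$. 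For the coisotropic half $\nabla(M_{i+1}^{\perp})\subset M_i^{\perp}\otimes\Omega_X(\log D)$, I invoke compatibility: for $a\in M_{i+1}^{\perp}$ and $b\in M_i$ one has $\langle a,b\rangle=0$, hence $\langle \nabla a,b\rangle+\langle a,\nabla b\rangle=0$; since $\nabla b\in M_{i+1}\otimes \Omega_X(\log D)$ by the just-established transversality and $a\perp M_{i+1}$, the second term vanishes, so $\nabla a\in M_i^{\perp}\otimes \Omega_X(\log D)$.

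\textbf{Part 2 (quasi gr-semistability).} I verify Conditions I and II of Definition \ref{defn_gr_semi_odd} for $\xi(\Fil)$ by translating any given refining filtration into a graded Higgs subsheaf of $(\Gr_{\Fil}(V),\theta)$ and invoking the maximality of $M_\Fil$. For Condition I, given a saturated Griffiths-transverse $0=W_0\subset W_1\subset\cdots\subset W_{m+1}$ with $M_{i-1}\subset W_i\subset M_i$, I form the candidate subsheaf
\[
\widetilde W:=\bigoplus_{i=1}^{m}(W_i+L_{i-1})/L_{i-1}\;\oplus\;W_{m+1}/L_m\;\oplus\;\bigoplus_{j=1}^{m}N_j^{\perp}/L_j^{\perp}
\]
of $\Gr_{\Fil}(V)$ (using the language of Remark \ref{explainregular}). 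Griffiths transversality of $\{W_i\}$, together with the Higgs subsheaf property of $M_\Fil$ on the coisotropic summands, shows $\widetilde W$ is a graded Higgs subsheaf. Its isotropy follows from $W_i\subset M_i\subset N_i$ via the pairing $L_i/L_{i-1}\times L_{i-1}^{\perp}/L_i^{\perp}\to\sO_X$. Maximality of $M_\Fil$ gives $\deg\widetilde W\le \deg M_\Fil$; after cancelling the common coisotropic contributions $\deg(N_j^{\perp}/L_j^{\perp})$ and rearranging, this becomes $\sum_i\deg W_i\le \sum_i\deg M_i$, which is Condition I for $\xi(\Fil)$.

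\textbf{Part 3 (Condition II and main obstacle).} For Condition II, given a Griffiths-transverse $0\subset S_1\subset\cdots\subset S_{m+1}\subset S_{m+2}\subset T_{m+1}^{\perp}\subset\cdots\subset T_1^{\perp}\subset V$ refining $\xi(\Fil)$ with strict degree inequality, the aim is $\nabla(S_{m+2})\subset S_{m+2}^{\perp}\otimes \Omega_X(\log D)$. I regard this also as a refinement of $\Fil$ (the $S_i$, $T_i$ are already isotropic and sit between consecutive $L$'s via $L_{i-1}\subset M_{i-1}\subset S_i\subset T_i\subset M_i\subset L_i$) and feed it into Condition II of the quasi gr-semistable $\Fil$; the conclusion is the same. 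The main obstacle is the bookkeeping needed to convert the strict inequality hypothesis phrased in terms of the $M_i,M_{m+1}$ (and the analogous $T,S$ data for $\xi(\Fil)$) into the strict inequality phrased for $\Fil$. Concretely I must express $\sum_i\deg M_i$ as a combination of $\deg M_\Fil$ and $\sum_j\deg N_j^{\perp}$, exploit the asymmetry $M_j\subset N_j$ (so $N_j^{\perp}\subset M_j^{\perp}$) on the coisotropic side, and use $\deg M_\Fil\ge 0$ (from semistability of $(V,\nabla)$ plus the fact that $\Gr_\Fil(V)$ inherits the slope of $V$) to ensure the strict inequality propagates. Once this translation is carried out, Condition II for $\Fil$ supplies the required Higgs-orthogonality of $\nabla$ at $S_{m+2}$.
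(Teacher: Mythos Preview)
Your argument has two substantive gaps.

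\textbf{The middle transversality step.} Your ``similarly'' for $\nabla(M_{m+1})\subset M_{m+1}^{\perp}\otimes\Omega_X(\log D)$ does not follow from the Higgs-subsheaf property of $M_{\Fil}$. That property only gives $\theta(M_{m+1}/L_m)\subset (N_m^{\perp}/L_m^{\perp})\otimes\Omega_X(\log D)$, which lifts to $\nabla(M_{m+1})\subset N_m^{\perp}\otimes\Omega_X(\log D)$. Since $N_m\subset L_m\subset M_{m+1}$ one has $M_{m+1}^{\perp}\subset N_m^{\perp}$, so the containment obtained is strictly \emph{weaker} than required. This step is precisely where quasi gr-semistability of $\Fil$ is essential: one recognises $M_{\Fil}=\beta(M_{\cdot},N_{\cdot})$ in the notation of Remark~\ref{explainregular}, observes $\deg M_{\Fil}>0$ (it is the maximal destabilizer of an unstable sheaf of slope zero; semistability of $(V,\nabla)$ is irrelevant here), and invokes Condition~II of $\Fil$ to force $\nabla(M_{m+1})\subset M_{m+1}^{\perp}\otimes\Omega_X(\log D)$.

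\textbf{Condition II for $\xi(\Fil)$.} Your chain $L_{i-1}\subset M_{i-1}\subset S_i\subset T_i\subset M_i\subset L_i$ is backwards: by construction $L_{i-2}\subset M_{i-1}\subset L_{i-1}$, so $M_{i-1}\subset L_{i-1}$. Hence the $(S_{\cdot},T_{\cdot})$ data refining $\xi(\Fil)$ does \emph{not} directly refine $\Fil$ and cannot be fed into Condition~II of $\Fil$ as you propose. The paper fixes this by shifting and intersecting: set $\tilde S_i:=S_{i+1}\cap L_i$, $\tilde T_i:=T_{i+1}\cap L_i$ for $1\le i\le m$ and $\tilde S_{m+1}:=S_{m+2}$, so that $L_{i-1}\subset\tilde S_i\subset\tilde T_i\subset L_i$. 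To propagate the strict degree inequality to $(\tilde S_{\cdot},\tilde T_{\cdot})$ one uses the intermediate estimate
\[
\sum_{i}\bigl(\deg M_i-\deg W_i\bigr)\ \ge\ \sum_{i}\bigl(\deg L_{i-1}-\deg(W_i\cap L_{i-1})\bigr),
\]
applied once with $W_{\cdot}=S_{\cdot}$ and once with $W_{\cdot}=T_{\cdot}$. This estimate is exactly what emerges from your inequality $\deg\widetilde W\le\deg M_{\Fil}$ in Part~2 after expanding $\deg(W_i+L_{i-1})=\deg W_i+\deg L_{i-1}-\deg(W_i\cap L_{i-1})$; your ``rearranging'' skipped this term, and Condition~I for $\xi(\Fil)$ itself requires the right-hand side to be nonnegative, which is obtained by applying Condition~I of $\Fil$ to the shifted filtration $\{W_{i+1}\cap L_i\}_i$.
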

	
\begin{proof}
First, we will show that $\xi(\Fil)$ is a Hodge filtration, that is, $\xi(\Fil)$ satisfies the Griffiths transversality condition. It is enough to show that $$\nabla(M_{m+1})\subset M_{m+1}^\perp\otimes\Omega_X(\log D).$$ This is true by Remark \ref{explainregular} since $\Fil$ is quasi gr-semistable and $\deg(M_{\Fil})>0$.

Next, we will check the quasi gr-semistability of $\xi(\Fil)$.
\begin{enumerate}
\item[I.] Let $$0=W_0\subset W_1 \subset \cdots\subset W_{m+1}$$  be a given saturated Griffiths transverse filtration such that $M_{i-1}\subset W_i\subset M_i$. We need to show that $$\sum_{0\leq i\leq m+1}\deg(W_i)\leqslant\sum_{0\leq i\leq m+1}\deg(M_i).$$
Consider the following Higgs subsheaf $\sigma(W.)$ of $\Gr_{\Fil}(V)$:
$$\sigma(W.):=\mathop{\bigoplus}\limits_{i=1}^{m} \frac{W_i+L_{i-1}}{L_{i-1}}\oplus\frac{W_{m+1}+L_m}{L_m} \oplus \mathop{\bigoplus}\limits_{i=1}^{m} \frac{N_{m-i+1}^\perp}{L_{m-i+1}^\perp}.$$
Clearly, $\sigma(W.)$ is a Higgs subsheaf of $M_{\Fil}$. Since $M_{\Fil}$ is the maximal destabilizer, we have
\begin{equation}
\deg(M_{\Fil})-\deg(\sigma(W.))\geq 0
\end{equation}
On the other hand,
\begin{equation*}
\begin{split}
&\deg(M_{\Fil})-\deg(\sigma(W.))=\sum_{0\leq i\leq m+1}\deg(M_i)-\sum_{0\leq i\leq m+1}\deg(W_i+L_{i-1})\\
&=\sum_{0\leq i\leq m+1}\deg(M_i)-\sum_{0\leq i\leq m+1}\big(\deg(W_i)+\deg(L_{i-1})-\deg(W_i\cap L_{i-1})\big)\\
&=\sum_{0\leq i\leq m+1}\big(\deg(M_i)-\deg(W_i)\big)-\sum_{0\leq i\leq m+1}\big(\deg(L_{i-1})-\deg(W_i\cap L_{i-1})\big).
\end{split}
\end{equation*}
Therefore,
\begin{equation}\label{increasing}
\sum_{0\leq i\leq m+1}\big(\deg(M_i)-\deg(W_i)\big)\geq \sum_{0\leq i\leq m+1}\big(\deg(L_{i-1})-\deg(W_i\cap L_{i-1})\big)\geq 0.
\end{equation}
The last inequality holds because $\Fil$ is quasi gr-semistable. Condition I. in Definition \ref{defn_gr_semi_odd} is satisfied.

\hfill{\space}

\item[II.]
Let
$$\Fil_{ST}:0\subset S_1 \subset \cdots\subset S_{m+1}\subset S_{m+2}\subset T_{m+1}^\perp\subset\cdots \subset T_1^\perp\subset V$$ be a given saturated Griffiths transverse flitration such that
\begin{itemize}
    \item $M_{i-1}\subset S_i \subset T_i \subset M_i,\  M_{m+1}\subset S_{m+2}\subset M_{m+1}^\perp$,
    \item $S_{m+2}$ is isotropic,
    \item the inequality
    \begin{equation}\label{inequality}
	\deg(S_{m+2})>\sum_{0\leq i\leq m+1}\big(\deg(M_i)-\deg(S_i)\big)+\sum_{0\leq i\leq m+1}\big(\deg(M_i)-\deg(T_i)\big).
    \end{equation}
    is satisfied.
\end{itemize}
We need to show that $\nabla(S_{m+2})\subset S_{m+2}^\perp\otimes \Omega_X(\log D).$ Consider the following filtration:
$$\Fil_{\tilde{S}\tilde{T}}:=0\subset \tilde{S}_1\subset \cdots\subset \tilde{S}_m\subset \tilde{S}_{m+1}\subset \tilde{T}_m^\perp\subset\cdots \subset \tilde{T}_1^\perp\subset V,$$
where $\tilde{S}_i=S_{i+1}\cap L_i,\ \tilde{T}_i=T_{i+1}\cap L_i$, for $1\leq i\leq m,$ and $\tilde{S}_{m+1}=S_{m+2}$. Then $L_{i-1}\subset \tilde{S}_i\subset \tilde{T}_i\subset L_i$ for $1\leq i\leq m$, $L_m\subset\tilde{S}_{m+1}\subset L_m^\perp$ and $\tilde{S}_{m+1}$ is isotropic. Moreover, we have $$\deg(\tilde{S}_{m+1})>\sum_{0\leq i\leq m}\big(\deg(L_i)-\deg(\tilde{S}_i)\big)+\sum_{0\leq i\leq m}\big(\deg(L_i)-\deg(\tilde{T}_i)\big).$$
In fact, by the inequality   \ref{increasing} and \ref{inequality}, we have
\begin{equation}
\begin{split}
    \deg(\tilde{S}_{m+1})&=\deg(S_{m+2})\\
    &>\sum_{0\leq i\leq m+1}\big(\deg(M_i)-\deg(S_i)\big)+\sum_{0\leq i\leq m+1}\big(\deg(M_i)-\deg(T_i)\big)\\
    &\geq \sum_{0\leq i\leq m+1}\big(\deg(L_{i-1})-\deg(S_i\cap L_{i-1})\big) \\
    & +\sum_{0\leq i\leq m+1}\big(\deg(L_{i-1})-\deg(T_i\cap L_{i-1})\big)\\
    &=\sum_{0\leq i\leq m}\big(\deg(L_i)-\deg(\tilde{S}_i)\big)+\sum_{0\leq i\leq m}\big(\deg(L_i)-\deg(\tilde{T}_i)\big).
\end{split}
\end{equation}
Then, by the quasi gr-semistability of $\Fil$, we have $$\nabla(\tilde{S}_{m+1})\subset\tilde{S}_{m+1}^\perp\otimes\Omega_X(\log D),$$ which implies $$\nabla(S_{m+2})\subset S_{m+2}^\perp\otimes \Omega_X(\log D).$$
\end{enumerate}
\end{proof}

Now we will state and prove the main result in this section.
\begin{thm}\label{main thm}
Let $(V,\langle \, , \, \rangle,\nabla)$ be semistable and quasi gr-semistable with odd rank. There exists a gr-semistable filtration on $(V,\langle \, , \, \rangle,\nabla)$.
\end{thm}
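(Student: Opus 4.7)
The plan is to produce a gr-semistable filtration by iterating the operator $\xi$ introduced before Lemma~\ref{preserve}. Start with a quasi gr-semistable filtration $\Fil_0$ of $(V,\langle\,,\,\rangle,\nabla)$, which exists by hypothesis. If $(\Gr_{\Fil_0}(V),\overline{\langle\,,\,\rangle},\theta)$ is semistable, we are done; otherwise form $\Fil_1:=\xi(\Fil_0)$, which is again a quasi gr-semistable Hodge filtration by Lemma~\ref{preserve}. Iterating produces a sequence $\{\Fil_k\}_{k\ge 0}$ of quasi gr-semistable Hodge filtrations, and the problem reduces to showing that the process must terminate after finitely many steps.

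The natural invariant to monitor is $\alpha(\Fil):=\sum_{i\ge 1}\deg(L_i)$. Using $\deg(L^\perp)=\deg(L)$ (a consequence of $\deg V=0$) and the decomposition
$$M_\Fil=\bigoplus_{i=1}^{m}\frac{M_i}{L_{i-1}}\oplus\frac{M_{m+1}}{L_m}\oplus\bigoplus_{i=1}^{m}\frac{N_{m-i+1}^\perp}{L_{m-i+1}^\perp},$$
a direct computation yields
$$\alpha(\xi(\Fil))-\alpha(\Fil)=\deg(M_\Fil)+\sum_{j=1}^{m}\deg(L_j)-\sum_{j=1}^{m}\deg(N_j).$$
Since $\deg(M_\Fil)>0$, to force strict monotonicity it suffices to verify that the auxiliary filtration $0\subset N_1\subset\cdots\subset N_m$ is saturated, Griffiths transverse, and sandwiched by $L_{j-1}\subset N_j\subset L_j$; for then Condition~I of Definition~\ref{defn_gr_semi_odd} applied to $\Fil$ with $W_j=N_j$ gives $\sum\deg(N_j)\le\sum\deg(L_j)$ and so $\alpha(\xi(\Fil))>\alpha(\Fil)$.

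The chief technical obstacle is the Griffiths transversality of $\{N_j\}$. I would derive it from Higgs-invariance of $M_\Fil$ on the second-half pieces, which directly gives $\nabla(N_{k+1}^\perp)\subset N_k^\perp\otimes\Omega_X(\log D)$, and then dualize via the compatibility identity $\langle\nabla a,b\rangle+\langle a,\nabla b\rangle=\lambda\,d\langle a,b\rangle$: for $a\in N_j$ and $b\in N_{j+1}^\perp\subset N_j^\perp$ the pairing $\langle a,b\rangle$ vanishes, whence $\langle\nabla a,b\rangle=-\langle a,\nabla b\rangle=0$, yielding $\nabla(N_j)\subset(N_{j+1}^\perp)^\perp\otimes\Omega_X(\log D)=N_{j+1}\otimes\Omega_X(\log D)$. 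The sandwiching $M_j\subset N_j$ comes from isotropy of $M_\Fil$: the nondegenerate pairing between positions $j$ and $2m+2-j$ of $\Gr_\Fil(V)$ forces $\langle M_j,N_j^\perp\rangle=0$, hence $M_j\subset(N_j^\perp)^\perp=N_j$.

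Once these verifications are in place, termination is automatic. The quantity $\alpha$ takes integer values (intersection numbers of Chern classes against the fixed polarization), strictly increases along the iteration by at least $\deg(M_\Fil)\ge 1$, and is bounded above by a constant depending only on $V$: each $L_i$ is a saturated subsheaf of the fixed torsion-free sheaf $V$ of rank at most $(\rk V-1)/2$ so $\deg(L_i)\le\tfrac{\rk V-1}{2}\mu_{\max}(V)$, and the filtration length $m$ itself is bounded by $(\rk V-1)/2$. Hence the process stabilizes after finitely many steps, and at the terminating step the associated graded orthogonal Higgs sheaf admits no destabilizer, i.e.\ is semistable, producing the desired gr-semistable filtration.
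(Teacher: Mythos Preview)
Your monotonicity computation for $\alpha(\Fil)=\sum_{i\ge 1}\deg(L_i)$ is correct, and your dualization argument showing that the filtration $\{N_j\}$ is Griffiths transverse (hence a legitimate test filtration for Condition~I) is a nice observation. The gap lies in the termination step. The operator $\xi$ takes a filtration of length $2m+1$ to one of nominal length $2m+3$: the chain $0=M_0\subset M_1\subset\cdots\subset M_{m+1}$ has $m+1$ isotropic levels, not $m$. After $k$ iterations the indexing runs up to $m+k$, and your invariant $\alpha(\xi^k(\Fil))$ is a sum of $m+k$ nonnegative terms, each bounded above by $C:=\tfrac{\rk V-1}{2}\,\mu_{\max}(V)$. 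So the only upper bound you obtain is $(m+k)C$, which grows linearly in $k$; the inequality $\alpha(\xi^{k+1}(\Fil))-\alpha(\xi^k(\Fil))\ge 1$ is therefore not enough to force termination. Collapsing repeated entries to get a strict filtration of bounded length does not help either: you would then drop terms $\deg(M_i)\ge 0$ from $\alpha$, destroying the monotonicity identity you derived.

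A symptom that something is missing: your argument never invokes the semistability of $(V,\langle\,,\,\rangle,\nabla)$, yet gr-semistability \emph{implies} semistability (any $\nabla$-invariant subsheaf $W$ induces the Higgs subsheaf $\bigoplus_i (W\cap Fil_i)/(W\cap Fil_{i+1})$ of the same degree in any $\Gr_\Fil(V)$). So a proof that bypasses the semistability hypothesis cannot be right.

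The paper's approach avoids this by tracking a different invariant: the pair $\big(\mu(M_{\xi^k(\Fil)}),\rk(M_{\xi^k(\Fil)})\big)$, which Lemma~\ref{decrease} shows is weakly decreasing in the lexicographic order. Since ranks lie in $\{1,\dots,\rk V\}$ and degrees are positive integers, this sequence takes values in a set with bounded denominators and stabilizes. Lemma~\ref{lem_const} then shows, \emph{using} semistability of $(V,\langle\,,\,\rangle,\nabla)$, that once the pair is constant the graded must already be semistable. To repair your argument you would need either a uniform bound on $\alpha$ independent of the iteration step, or to switch to an invariant like the paper's whose range is genuinely well-ordered.
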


We need the following two lemmas to prove this theorem.
\begin{lem}\label{decrease}
Let $\Fil$ be a quasi gr-semistable filtration of $(V, \langle \, , \, \rangle, \nabla)$. Asssume that the graded orthogonal sheaves $(\Gr_{\Fil}(V),\theta_1)$ and $(\Gr_{\xi(\Fil)}(V),\theta_2)$ are not semistable. Let $M_{\Fil}$ (resp. $M_{\xi(\Fil)}$)  be the maximal destabilizer of $(\Gr_{\Fil}(V),\theta_1)$ (resp. $\Gr_{\xi(\Fil)}(V),\theta_2)$). Then we have a natural morphism of graded Higgs sheaves
$$\Phi: M_{\xi(\Fil)}\longrightarrow M_{\Fil}$$
such that
\begin{enumerate}
\item $\mu(M_{\xi(\Fil)})\leq \mu(M_{\Fil})$;
\item if $\mu(M_{\xi(\Fil)})= \mu(M_{\Fil})$, the morphism $\Phi$ is injective, then $\rk(M_{\xi(\Fil)})\leq\rk(M_{\Fil})$;
\item if $\big(\mu(M_{\xi(\Fil)},\ \rk(M_{\xi(\Fil)})\big)= \big(\mu(M_{\Fil}),\rk(M_{\Fil})\big)$, then $\Phi$ is an isomorphism on an open subset $U \subset X$ with ${\rm codim}(X/U) \geq 2$.
\end{enumerate}
\end{lem}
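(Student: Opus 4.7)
The plan is to exploit the common refinement
\[
0\subset M_1\subset L_1\subset M_2\subset L_2\subset \cdots \subset M_m\subset L_m\subset M_{m+1}\subset M_{m+1}^\perp\subset L_m^\perp\subset \cdots\subset L_1^\perp\subset M_1^\perp\subset V
\]
of $\Fil$ and $\xi(\Fil)$, which is itself Griffiths transverse because both $\Fil$ and $\xi(\Fil)$ (the latter by Lemma \ref{preserve}) are Hodge filtrations and $L_{i-1}\subset M_i\subset L_i$. On the graded piece $\frac{M'_i}{M_{i-1}}$ of $M_{\xi(\Fil)}$ (with $1\le i\le m+1$, $M_{i-1}\subset M'_i\subset M_i$) I would define
\[
\Phi_i:\ \frac{M'_i}{M_{i-1}}\ \hookrightarrow\ \frac{M_i}{M_{i-1}}\ \twoheadrightarrow\ \frac{M_i}{L_{i-1}},
\]
which is well-defined since $M_{i-1}\subset L_{i-1}\subset M_i$ and whose image $(M'_i+L_{i-1})/L_{i-1}$ lies in the $i$-th component $M_i/L_{i-1}$ of $M_\Fil$ (or in the middle component $M_{m+1}/L_m$ when $i=m+1$). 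The dual pieces $\frac{N'^\perp_j}{M_j^\perp}$ of $M_{\xi(\Fil)}$ are then mapped into $M_\Fil$ by $\langle\,,\,\rangle$-duality from the $\Phi_i$, and the middle piece $\frac{M'_{m+2}}{M_{m+1}}\subset\frac{M_{m+1}^\perp}{M_{m+1}}$ is treated using the nondegenerate symmetric form on $L_m^\perp/L_m$ together with the isotropy of $M_{m+1}/L_m$. Griffiths transversality along the common refinement then guarantees compatibility with $\theta_1$ and $\theta_2$, so these components assemble into a single morphism $\Phi$ of graded orthogonal Higgs sheaves whose image sits inside $M_\Fil$.

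For assertion (1), observe that $\Phi(M_{\xi(\Fil)})$ is a graded orthogonal Higgs sub-sheaf of $M_\Fil$; since $M_\Fil$ is the maximal destabilizer and therefore semistable, $\mu(\Phi(M_{\xi(\Fil)}))\le\mu(M_\Fil)$. On the other hand, the short exact sequence $0\to\ker\Phi\to M_{\xi(\Fil)}\to\Phi(M_{\xi(\Fil)})\to 0$ combined with the semistability of $M_{\xi(\Fil)}$ gives $\mu(M_{\xi(\Fil)})\le\mu(\Phi(M_{\xi(\Fil)}))$, because a quotient of a semistable sheaf by a sub-sheaf cannot have strictly smaller slope. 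Chaining these two inequalities yields $\mu(M_{\xi(\Fil)})\le\mu(M_\Fil)$. For (2), if $\mu(M_{\xi(\Fil)})=\mu(M_\Fil)$ and $\Phi$ is injective, then $\rk(M_{\xi(\Fil)})=\rk(\Phi(M_{\xi(\Fil)}))\le\rk(M_\Fil)$ from the embedding $\Phi(M_{\xi(\Fil)})\hookrightarrow M_\Fil$.

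For (3), under the additional hypothesis $\rk(M_{\xi(\Fil)})=\rk(M_\Fil)$, injectivity together with the two equalities forces $\rk(\Phi(M_{\xi(\Fil)}))=\rk(M_\Fil)$ and $\deg(\Phi(M_{\xi(\Fil)}))=\deg(M_\Fil)$; hence the torsion-free part of the cokernel of $\Phi(M_{\xi(\Fil)})\hookrightarrow M_\Fil$ vanishes and the torsion part has degree zero. On a smooth projective variety a torsion sheaf of degree zero is supported in codimension $\ge 2$, so $\Phi$ is an isomorphism on the complement of such a subset.

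The main obstacle is the definition of the middle component $\Phi_{m+2}$ on $\frac{M'_{m+2}}{M_{m+1}}\subset\frac{M_{m+1}^\perp}{M_{m+1}}$: the naive inclusion $M_{m+1}^\perp\hookrightarrow L_m^\perp$ followed by the quotient by $L_m$ lands in $M_{m+1}^\perp/L_m$, which lies \emph{outside} the target $M_{m+1}/L_m\subset L_m^\perp/L_m$. The correct construction must use the nondegenerate symmetric form on $L_m^\perp/L_m$, which identifies $M_{m+1}/L_m$ with the $\langle\,,\,\rangle$-dual of $L_m^\perp/M_{m+1}^\perp$, in order to produce a morphism from the middle of $M_{\xi(\Fil)}$ into the middle of $M_\Fil$ by duality. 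Verifying that this map respects the orthogonal structure of $M_\Fil$ and commutes with the Higgs fields $\theta_1,\theta_2$ simultaneously with the other component maps is the central technical step, since it requires reconciling the two distinct gradings on $\Gr_\Fil(V)$ and $\Gr_{\xi(\Fil)}(V)$.
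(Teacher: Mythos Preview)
Your construction of $\Phi$ on the first $m+1$ graded pieces agrees with the paper's, but the paper simply sets $\Phi$ to be \emph{zero} on the middle piece $X_{m+2}/M_{m+1}$ and on every dual piece $Y_{m+2-j}^\perp/M_{m+2-j}^\perp$. There is no attempt to build a duality map into $M_{m+1}/L_m$ or into the dual components of $M_\Fil$; indeed, the grading shift between $\Gr_{\xi(\Fil)}$ (length $2m+3$) and $\Gr_\Fil$ (length $2m+1$) makes a graded duality map of the kind you describe ill-defined, which is exactly the obstruction you flag but do not resolve.

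The genuine gap in your argument is that you never handle the possibility $\Phi=0$, and---crucially---you never use the quasi gr-semistability hypothesis. Your slope comparison $\mu(M_{\xi(\Fil)})\le\mu(\Phi(M_{\xi(\Fil)}))\le\mu(M_\Fil)$ collapses when the image is zero. In the paper this is precisely where quasi gr-semistability enters: if $\Phi=0$ then $X_i\subset L_{i-1}$ for all $i$, and one defines a second Higgs morphism $\Psi:M_{\xi(\Fil)}\to \Gr_\Fil(V)/M_\Fil$ by the obvious inclusions $X_i/M_{i-1}\hookrightarrow L_{i-1}/M_{i-1}$, $X_{m+2}/M_{m+1}\hookrightarrow L_m^\perp/M_{m+1}$, $Y_{m+2-j}^\perp/M_{m+2-j}^\perp\hookrightarrow L_{m-j}^\perp/N_{m-j+1}^\perp$. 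If $\Psi$ were also zero, one computes $\deg(M_{\xi(\Fil)})=\sum\big(\deg(Y_i)-\deg(M_i)\big)$, and Condition~I of quasi gr-semistability (applied to $\xi(\Fil)$, which is quasi gr-semistable by Lemma~\ref{preserve}) forces this to be $\le 0$, contradicting that $M_{\xi(\Fil)}$ is a destabilizer. So $\Psi\ne 0$, and then $\mu(M_{\xi(\Fil)})\le\mu(\mathrm{Im}\,\Psi)<\mu(M_\Fil)$ gives a \emph{strict} inequality. This strict inequality in the $\Phi=0$ branch is also what drives statement~(2): equality of slopes forces $\Phi\ne 0$, and then semistability of both maximal destabilizers with equal slope forces $\ker\Phi=0$. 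Your proposal treats injectivity of $\Phi$ as an additional hypothesis in (2) rather than as the conclusion.
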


\begin{proof}
We write $\Fil$ as
$$\Fil: 0= L_0 \subsetneq L_1 \subsetneq \cdots \subsetneq L_m \subsetneq L_m^\perp\subsetneq\cdots \subsetneq L_1^\perp\subsetneq L_0^\perp= V$$
Then,
$$\Gr_{\Fil}(V)=\mathop{\bigoplus}\limits_{i=1}^{m} \frac{L_i}{L_{i-1}}\oplus \frac{L_m^\perp}{L_m}\oplus \mathop{\bigoplus}\limits_{i=1}^{m} \frac{L_{m-i}^\perp}{L_{m-i+1}^\perp}.$$
Then, let $M_{\Fil}$ be as follows
$$M_{\Fil}=\mathop{\bigoplus}\limits_{i=1}^{m} \frac{M_i}{L_{i-1}}\oplus \frac{M_{m+1}}{L_m}\oplus \mathop{\bigoplus}\limits_{i=1}^{m} \frac{N_{m-i+1}^\perp}{L_{m-i+1}^\perp}.$$
Recall that
\begin{itemize}
\item $L_{i-1} \subset M_i \subset N_i \subset L_i$ for $1 \leq i \leq m$,
\item $M_{m+1}$ is isotropic.
\end{itemize}
Then, $\xi(\Fil)$ is given by
$$\xi(\Fil):0=M_0\subset M_1\subset\cdots\subset M_{m+1}\subset  M_{m+1}^\perp\subset\cdots\subset M_1^\perp\subset M_0^\perp=V,$$
and similarly,
$$\Gr_{\xi(\Fil)}(V)=\mathop{\bigoplus}\limits_{i=1}^{m+1} \frac{M_i}{M_{i-1}}\oplus \frac{M_{m+1}^\perp}{M_{m+1}}\oplus \mathop{\bigoplus}\limits_{i=1}^{m+1} \frac{M_{m+1-i}^\perp}{M_{m+2-i}^\perp},$$
such that
\begin{itemize}
\item $M_{i-1} \subset X_i \subset Y_i \subset M_i$ for $1 \leq i \leq m+1$,
\item $X_{m+2}$ is isotropic.
\end{itemize}
Therefore, $M_{\xi(\Fil)}$ is given as follows:
$$M_{\xi(\Fil)}=\mathop{\bigoplus}\limits_{i=1}^{m+1} \frac{X_i}{M_{i-1}}\oplus \frac{X_{m+2}}{M_{m+1}}\oplus \mathop{\bigoplus}\limits_{i=1}^{m+1} \frac{Y_{m+2-i}^\perp}{M_{m+2-i}^\perp}.$$
		
Now we define the following natural morphisms of sheaves:
\begin{equation}
\begin{split}
&\Phi_i:\frac{X_i}{M_{i-1}}\longrightarrow \frac{M_i}{L_{i-1}},\ 1\leq i\leq m+1,\\
&\Phi_{m+2}:\frac{X_{m+2}}{M_{m+1}}\longrightarrow 0,\\&\tilde{\Phi}_j:\frac{Y_{m+2-j}^\perp}{M_{m+2-j}^\perp}\longrightarrow 0,\ 1\leq j\leq m+1,\\
\end{split}
\end{equation}
and we obtain a morphism of graded Higgs sheaves
$$\Phi: M_{\xi(\Fil)}\longrightarrow M_{\Fil}.$$
\begin{enumerate}
\item If $\Phi\neq 0$, we have $$\mu(M_{\xi(\Fil)})\leq \mu(M_{\Fil}).$$

\item If $\Phi= 0$, then $X_i\subset L_{i-1},\ 1\leq i\leq m+1$. Also, we have $X_{m+2}\subset L_{m}^\perp$ and $\ Y_{m+2-j}^\perp \subset L_{m-j}^\perp$ for $1\leq j\leq m+1$. In this case, we consider the quotient Higgs sheaf
$$\frac{\Gr_{\Fil}(V)}{M_{\Fil}}=\mathop{\bigoplus}\limits_{i=1}^{m} \frac{L_i}{M_i}\oplus \frac{L_m^\perp}{M_{m+1}}\oplus \mathop{\bigoplus}\limits_{i=1}^{m} \frac{L_{m-i}^\perp}{N_{m-i+1}^\perp},$$
and define the following natural morphisms:
\begin{equation}
\begin{split}
&\Psi_i:\frac{X_i}{M_{i-1}}\longrightarrow \frac{L_{i-1}}{M_{i-1}},\ 2\leq i\leq m+1,\\
&\Psi_{m+2}:\frac{X_{m+2}}{M_{m+1}}\longrightarrow \frac{L_m^\perp}{M_{m+1}},\\
&\tilde{\Psi}_j:\frac{Y_{m+2-j}^\perp}{M_{m+2-j}^\perp}\longrightarrow \frac{L_{m-j}^\perp}{N_{m-j+1}^\perp},\ 1\leq j\leq m,\\
&\tilde{\Psi}_{m+1}:\frac{Y_1^\perp}{M_1^\perp}\longrightarrow 0 .
\end{split}
\end{equation}
This construction gives a morphism of graded Higgs sheaves
$$\Psi: M_{\xi(\Fil)}\longrightarrow \frac{\Gr_{\Fil}(V)}{M_{\Fil}}.$$
We \textbf{claim} that $\Psi\neq 0$. Then we have $$\mu(M_{\xi(\Fil)})\leq \mu(\mathrm{Im}(\Psi)) <\mu(M_{\Fil}).$$
Note that in this case we get a strict inequality
$$\mu(M_{\xi(\Fil)})<\mu(M_{\Fil}).$$
The first statement that $\mu(M_{\xi(\Fil)})\leq \mu(M_{\Fil})$ is proven.

\hfill{\space}

For the \textbf{claim}, we suppose that $\Psi=0$. Then $\frac{X_i}{M_{i-1}}=0$ and thus $X_i  = M_{i-1}$ for $1 \leq i \leq m+2$. We have
$$M_{\xi(\Fil)}=\mathop{\bigoplus}\limits_{i=1}^{m+1} \frac{Y_{m+2-i}^\perp}{M_{m+2-i}^\perp}.$$
Since $(\Gr_{\xi(\Fil)}(V), \theta_2)$ is not semistable and $M_{\xi(\Fil)}$ is the maximal destabilizer, we have $$\deg(M_{\xi(\Fil)})>0.$$
However, Lemme \ref{preserve} tells us that $\xi(\Fil)$ is quasi gr-semistable, and then
\begin{equation*}
\deg(M_{\xi(\Fil)})=\sum_{0\leq i\leq m+1}\big(\deg(Y_i)-\deg(M_i)\big)\leq 0
\end{equation*}
by Condition I. in Definition \ref{defn_gr_semi_odd}. This is a contradiction.
\end{enumerate}

\hfill{\space}

The above discussion shows that $\mu(M_{\xi(\Fil)})= \mu(M_{\Fil})$ only if when $\Phi$ is injective. Then the second statement also holds. Moreover, if $\big(\mu(M_{\xi(\Fil)},\ \rk(M_{\xi(\Fil)})\big)= \big(\mu(M_{\Fil}),\rk(M_{\Fil})\big)$, clearly $\Phi$ is an isomorphism on an open subset $U \subset X$ with ${\rm codim}(X/U) \geq 2$, which gives the third statement.
\end{proof}
	
\begin{lem}\label{lem_const}
Let $(V, \langle \, , \, \rangle, \nabla)$ be semistable and quasi gr-semistable. If the sequence of pairs $\{\big(\mu(M_{\xi^k(\Fil)}),\ \rk(M_{\xi^k(\Fil)})\big)\}_{k\geq 0}$ is constant, then the graded orthogonal Higgs sheaf $(\Gr_{\Fil}(V), \overline{\langle \, , \, \rangle}, \theta)$ is semistable.
\end{lem}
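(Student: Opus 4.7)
The plan is to argue by contradiction: assume $(\Gr_{\Fil}(V), \overline{\langle\,,\,\rangle}, \theta)$ is not semistable and derive a contradiction from the constancy hypothesis. Under the assumption of non-semistability, the maximal destabilizer $M^{(0)} := M_{\Fil}$ is nonzero with positive degree, and constancy of the pair $(\mu(M^{(k)}), \rk(M^{(k)}))$ forces $M^{(k)} := M_{\xi^k(\Fil)} \neq 0$ for every $k \geq 0$. Hence by Lemma \ref{preserve} I may iterate $\xi$ indefinitely, producing a sequence of quasi gr-semistable filtrations $\Fil^{(k+1)} := \xi(\Fil^{(k)})$, and by Lemma \ref{decrease}(3) applied at each step, the natural morphism $\Phi^{(k)} \colon M^{(k+1)} \to M^{(k)}$ of graded Higgs sheaves is an isomorphism on an open subset of $X$ whose complement has codimension $\geq 2$.

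Next, I would exploit this rigidity componentwise. Writing $\Fil^{(k)}$ as $0 = L_0^{(k)} \subsetneq L_1^{(k)} \subsetneq \cdots \subsetneq L_{m_k}^{(k)} \subsetneq (L_{m_k}^{(k)})^\perp \subsetneq \cdots$ with the relation $L_i^{(k+1)} = M_i^{(k)}$, the top-piece summand of $\Phi^{(k)}$, namely $\Phi_{m_k+2}^{(k)} \colon M_{m_k+2}^{(k+1)}/L_{m_k+1}^{(k+1)} \to 0$, is an isomorphism onto zero away from codimension $2$; combined with saturatedness this forces $M_{m_k+2}^{(k+1)} = L_{m_k+1}^{(k+1)} = M_{m_k+1}^{(k)}$. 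Consequently the chain of top isotropic subsheaves $L_{m_k}^{(k)} \subseteq L_{m_{k+1}}^{(k+1)} \subseteq \cdots$ is non-decreasing, and since any saturated isotropic subsheaf of $V$ has rank at most $\lfloor \rk(V)/2 \rfloor$, it stabilizes at some $T \subset V$ for all $k$ past some $k_1$. From this point the top component of $M^{(k)}$ vanishes, so the whole destabilizer, and hence the whole relevant part of $\Fil^{(k)}$, lives inside $T$.

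Finally, I would combine the rigidity of the remaining components of $\Phi^{(k)}$ with the observation that, under constancy, strict positivity of $\mu(M^{(k)})$ prevents the destabilizer from being entirely a union of full graded pieces $L_i^{(k)}/L_{i-1}^{(k)}$ (which would make $\xi$ act trivially). This yields a strictly increasing length $m_k$ of $\Fil^{(k)}$ for $k \geq k_1$, contradicting the bound $m_k \leq \rk(T) \leq \lfloor \rk(V)/2 \rfloor$. The main obstacle, and the step I expect to require the most care, is precisely the last one: verifying that constancy truly forces strict refinement of the filtration at each iteration, i.e.\ ruling out the degenerate fixed-point configurations of $\xi$, which will require essential use of Lemma \ref{decrease}(3) in tandem with Condition I of Definition \ref{defn_gr_semi_odd} and Proposition \ref{positive}.
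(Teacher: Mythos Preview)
Your setup is correct and matches the paper's strategy: assume non-semistability, iterate $\xi$, and use Lemma \ref{decrease}(3) to conclude that each $\Phi^{(k)}$ is a graded isomorphism in codimension $\geq 2$. Your componentwise analysis of the top and perp pieces of $\Phi^{(k)}$ is also right; it indeed forces the ``middle'' and ``$N^\perp$'' summands of every $M^{(k)}$ (for $k\geq 1$) to vanish.

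The gap is in your final paragraph, where the logic is inverted relative to what actually happens. You propose to \emph{rule out} the configuration in which $M^{(k)}$ consists of full graded pieces $L^{(k)}_i/L^{(k)}_{i-1}$, and then derive a contradiction from unbounded growth of the effective length. But this configuration is precisely the one that \emph{must} occur after finitely many steps, and it is the source of the contradiction. Concretely, surjectivity of each $\Phi^{(k)}_i$ gives $A^{(k+1)}_i + L^{(k)}_{i-1} = L^{(k+1)}_i$; starting from $A^{(k)}_1 = L^{(k)}_1$ (the case $i=1$, since $L^{(k)}_0=0$) one proves by induction on $i$ that $A^{(k)}_i = L^{(k)}_i$ once $k\geq i$. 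Thus after at most $m+1$ iterations the destabilizer \emph{is} the initial segment $\bigoplus_{i\leq j} L^{(k)}_i/L^{(k)}_{i-1}$ for some $j$. At that point $\theta$-invariance forces $\nabla(L^{(k)}_j)\subset L^{(k)}_j\otimes\Omega_X(\log D)$, and $\mu(L^{(k)}_j)=\mu(M^{(k)})>0$ contradicts the \emph{semistability of $(V,\langle\,,\,\rangle,\nabla)$} --- the hypothesis you never explicitly invoke. ``Strict positivity of $\mu(M^{(k)})$'' alone does not prevent this configuration; it is positivity \emph{together with} semistability of the ambient $\lambda$-connection that yields the contradiction.

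Separately, your claimed alternative --- that when the destabilizer is not a full initial segment the effective length of $\Fil^{(k)}$ strictly increases --- is neither proved nor obviously true, and in any case is never needed: the inductive argument above shows the ``degenerate fixed-point'' case is always reached. Replace your last paragraph with that induction and the direct appeal to semistability, and the proof goes through (this is exactly what the paper does, writing $Z_i := L^{(m+2)}_i$).
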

		
\begin{proof}
By Proposition \ref{prop_HN_fil}, it is equivalent to work on $(\Gr_{\Fil}(V), \theta)$ and we assume that $(\Gr_{\Fil}(V), \theta)$ is not semistable. We use the same notations as in the proof of Lemma \ref{decrease}:
$$\Fil: 0= L_0 \subsetneq L_1 \subsetneq \cdots \subsetneq L_m \subsetneq L_m^\perp\subsetneq\cdots \subsetneq L_1^\perp\subsetneq L_0^\perp= V,$$
			
$$\Gr_{\Fil}(V)=\mathop{\bigoplus}\limits_{i=1}^{m} \frac{L_i}{L_{i-1}}\oplus \frac{L_m^\perp}{L_m}\oplus \mathop{\bigoplus}\limits_{i=1}^{m} \frac{L_{m-i}^\perp}{L_{m-i+1}^\perp},$$
			
$$M_{\Fil}=\mathop{\bigoplus}\limits_{i=1}^{m} \frac{M_i}{L_{i-1}}\oplus \frac{M_{m+1}}{L_m}\oplus \mathop{\bigoplus}\limits_{i=1}^{m} \frac{N_{m-i+1}^\perp}{L_{m-i+1}^\perp},$$ 			

$$\xi(\Fil):0=M_0\subset M_1\subset\cdots\subset M_{m+1}\subset  M_{m+1}^\perp\subset\cdots\subset M_1^\perp\subset M_0^\perp=V,$$			

$$\Gr_{\xi(\Fil)}(V)=\mathop{\bigoplus}\limits_{i=1}^{m+1} \frac{M_i}{M_{i-1}}\oplus \frac{M_{m+1}^\perp}{M_{m+1}}\oplus \mathop{\bigoplus}\limits_{i=1}^{m+1} \frac{M_{m+1-i}^\perp}{M_{m+2-i}^\perp},$$								$$M_{\xi(\Fil)}=\mathop{\bigoplus}\limits_{i=1}^{m+1} \frac{X_i}{M_{i-1}}\oplus \frac{X_{m+2}}{M_{m+1}}\oplus \mathop{\bigoplus}\limits_{i=1}^{m+1} \frac{Y_{m+2-i}^\perp}{M_{m+2-i}^\perp}.$$
By Lemma \ref{decrease}, the morphism $\Phi:M_{\xi(\Fil)}\longrightarrow M_{\Fil}$ is an isomorphism over an open subset $U$, where ${\rm codim}(X/U) \geq 2$. Now we always restrict to $U$, and say $\Phi$ is an isomorphism for convenience. Now we suppose that $\xi^{m+2}(\Fil)$ is of the form
$$\xi^{m+2}(\Fil): 0= Z_0 \subset Z_1 \subset \cdots \subset Z_{2m+2}\subset Z_{2m+2}^\perp\subset\cdots \subset Z_1^\perp\subset Z_0^\perp= V.$$
The isomorphisms
\begin{align*}
    M_{\xi^{m+2}(\Fil)} \rightarrow M_{\xi^{m+1}(\Fil)} \rightarrow \dots \rightarrow M_{\Fil}
\end{align*}
imply
$$M_{\xi^{m+2}(\Fil)}=Z_1\oplus \frac{Z_2}{Z_1}\oplus\cdots\oplus \frac{Z_{2m+1}}{Z_{2m}},$$
ant thus $Z_{2m+1}$ is a $\nabla$-invariant subsheaf of $V$. Since $M_{\xi^{m+2}(\Fil)}$ is the maximal destabilizer, we have $$\mu(Z_{2m+1})=\mu(M_{\xi^{m+2}(\Fil)})>0.$$
This contradicts with the condition that $(V,\langle \, , \, \rangle,\nabla)$ is semistable.
\end{proof}

\begin{proof}[\textbf{Proof of Theorem \ref{main thm}}]
First, we take an arbitrary quasi gr-semistable  filtration $\Fil$ of $(V,\langle \, , \, \rangle,\nabla)$. The sequence $\{\big(\mu(M_{\xi^k(\Fil)}),\ \rk(M_{\xi^k(\Fil)})\big)\}_{k\geq 0}$ decreases in the lexicographic ordering as $k$ grows by Lemma \ref{decrease}. Thus, there exists a positive integer $k_0$ such that the sequence $\{\big(\mu(M_{\xi^k(\Fil)}),\ \rk(M_{\xi^k(\Fil)})\big)\}_{k\geq k_0}$ becomes constant. By Lemma \ref{lem_const}, we finish the proof of this theorem.
\end{proof}

\begin{rem}
The statement of Lemma \ref{decrease} also works for arbitrary $(V, \langle \, , \, \rangle, \nabla)$, which could be unstable, while Lemma \ref{lem_const} only holds for semistable triples $(V, \langle \, , \, \rangle, \nabla)$.
\end{rem}

Combining Proposition \ref{ssregular} with Theorem \ref{main thm}, we have
	
\begin{thm}\label{thm_gr_and_quasi_gr_odd}
Let $(V,\langle \, , \, \rangle,\nabla)$ be a semistable orthogonal sheaf together with an orthogonal connection $\nabla$ such that the rank of $V$ is odd. Then, $(V,\langle \, , \, \rangle,\nabla)$ is gr-semistable  if and only if $(V,\langle \, , \, \rangle,\nabla)$ is quasi gr-semistable.
\end{thm}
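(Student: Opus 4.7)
The plan is to assemble the theorem directly from the two preceding results in this section: Proposition \ref{ssregular} and Theorem \ref{main thm}. Both implications are essentially one-liners given what has already been established, so the proof is pure packaging.

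For the forward direction, I would argue as follows. Suppose $(V,\langle \, , \, \rangle,\nabla)$ is gr-semistable. By Definition \ref{gr-semistable}, there exists an orthogonal Hodge filtration $\Fil$ of $(V,\langle \, , \, \rangle,\nabla)$ such that the associated graded orthogonal Higgs sheaf $(\Gr_{\Fil}(V), \overline{\langle \, , \, \rangle}, \theta)$ is semistable. Proposition \ref{ssregular} then immediately implies that $\Fil$ is quasi gr-semistable, so $(V,\langle \, , \, \rangle,\nabla)$ is quasi gr-semistable by definition. Note this direction requires neither the semistability hypothesis nor the odd-rank hypothesis.

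For the reverse direction, suppose $(V,\langle \, , \, \rangle,\nabla)$ is semistable and quasi gr-semistable. Since the rank of $V$ is odd, Theorem \ref{main thm} applies and yields a gr-semistable filtration, so $(V,\langle \, , \, \rangle,\nabla)$ is gr-semistable by Definition \ref{gr-semistable}.

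Since both halves are immediate consequences of already-established results, there is no genuine obstacle at this stage. The substantive work is encoded in Theorem \ref{main thm}, whose proof runs the iterated destabilizing modification scheme $\Fil \mapsto \xi(\Fil)$ starting from any quasi gr-semistable filtration. The hardest conceptual point in that scheme, and the one I would expect to be the main obstacle in a from-scratch write-up, is showing simultaneously that (i) Lemma \ref{preserve} preserves quasi gr-semistability under $\xi$ (the Griffiths transversality of $\xi(\Fil)$ is exactly what Condition II of quasi gr-semistability is designed to supply), (ii) Lemma \ref{decrease} ensures the pair $\bigl(\mu(M_{\xi^k(\Fil)}),\rk(M_{\xi^k(\Fil)})\bigr)$ weakly decreases in lexicographic order, and (iii) Lemma \ref{lem_const} invokes the semistability of $(V,\langle \, , \, \rangle,\nabla)$ to rule out a persistent positive-slope destabilizer once the pair stabilizes, forcing the graded Higgs sheaf to be semistable.
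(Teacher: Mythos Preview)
Your proposal is correct and matches the paper's approach exactly: the paper states the theorem as an immediate consequence of Proposition \ref{ssregular} (for the forward direction) and Theorem \ref{main thm} (for the reverse direction), with the substantive work already absorbed into the latter via Lemmas \ref{preserve}, \ref{decrease}, and \ref{lem_const}. Your additional commentary on those lemmas is accurate but beyond what is needed here.
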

	
\section{Quasi Gr-semistable Filtration: Even Rank}\label{sect_even}
In this section, we discuss the quasi gr-semistable (orthogonal/symplectic) filtrations of $(V, \langle \, , \, \rangle, \nabla)$, where $\rk(V)$ is even. In the even rank case, there are two types of orthogonal/symplectic Hodge filtrations, which depend on the parity of lengths of the filtrations. Despite this fact, the proof of the main result (Theorem \ref{thm_even}) in the even rank case is almost the same as that of Theorem \ref{main thm} in the odd rank case. Therefore, we only give constructions and state results rather than giving a detailed proof.

\begin{defn}
An orthogonal/symplectic Hodge filtration
$$\Fil: 0=Fil_0\subsetneq Fil_1 \subsetneq Fil_2 \subsetneq \cdots\subsetneq Fil_t=V$$
of $(V, \langle \, , \, \rangle, \nabla)$ is an \emph{odd} (resp. \emph{even}) type filtration if its length $t$ is odd (resp. even).
\end{defn}

\begin{rem}\
\begin{enumerate}
\item An odd type orthogonal Hodge filtration $\Fil$ can be written as
$$\Fil: 0= L_0 \subsetneq L_1 \subsetneq \cdots \subsetneq L_m \subsetneq L_m^\perp\subsetneq\cdots  \subsetneq L_1^\perp\subsetneq L_0^\perp= V,$$
where $L_m$ is not Lagrangian (i.e. $L_m\neq L_m^\perp$) and satisfies $\nabla(L_m)\subseteq L_m^\perp \otimes\Omega_X(\log D).$

\item An even type orthogonal Hodge filtration $\Fil$ can be regarded as
$$\Fil: 0= L_0 \subsetneq L_1 \subsetneq \cdots\subsetneq L_{m-1} \subsetneq L_m \subsetneq L_{m-1}^\perp\subsetneq\cdots  \subsetneq L_1^\perp\subsetneq L_0^\perp= V,$$
where $L_m$ is Lagrangian (i.e. $L_m= L_m^\perp$) subsheaf and satisfies $\nabla(L_m)\subseteq L_{m-1}^\perp \otimes\Omega_X(\log D).$
\end{enumerate}
In summary, the odd type orthogonal Hodge filtrations with even rank are similar to  orthogonal Hodge filtrations with odd rank.
\end{rem}
		
\begin{defn}[\textbf{Quasi gr-semistable filtration in even rank case}]\label{defn_gr_semi_even}
Given $(V,\langle \, , \, \rangle,\nabla)$, let $\Fil$ be an orthogonal/symplectic Hodge filtration.		
\begin{enumerate}
\item If $$\Fil: 0= L_0 \subsetneq L_1 \subsetneq \cdots \subsetneq L_m \subsetneq L_m^\perp\subsetneq\cdots  \subsetneq L_1^\perp\subsetneq L_0^\perp= V$$
is an odd type filtration, $\Fil$ is \emph{quasi gr-semistable} if the following two conditions hold:		
\begin{enumerate}
\item[ I.] For any saturated Griffiths transverse filtration $$0=W_0\subset W_1 \subset \cdots\subset W_m$$ such that $L_{i-1}\subset W_i\subset L_i$, we have
$$\sum_{0\leq i\leq m}\deg(W_i)\leqslant\sum_{0\leq i\leq m}\deg(L_i).$$

\item[ II.] For any saturated Griffiths transverse flitration $$0\subset S_1 \subset \cdots\subset S_m\subset S_{m+1}\subset T_m^\perp\subset\cdots \subset T_1^\perp\subset V$$ such that  $$\deg(S_{m+1})>\sum_{0\leq i\leq m}\big(\deg(L_i)-\deg(S_i)\big)+\sum_{0\leq i\leq m}\big(\deg(L_i)-\deg(T_i)\big),$$
we have
$$\nabla(S_{m+1})\subseteq S_{m+1}^\perp \otimes\Omega_X(\log D),$$
where $L_{i-1}\subset S_i \subset T_i \subset L_i,\  L_m\subset S_{m+1}\subset L_m^\perp$ and $S_{m+1}$
is isotropic.\\
\end{enumerate}
			
\item If $$\Fil: 0= L_0 \subsetneq L_1 \subsetneq \cdots\subsetneq L_{m-1} \subsetneq L_m \subsetneq L_{m-1}^\perp\subsetneq\cdots  \subsetneq L_1^\perp\subsetneq L_0^\perp= V$$ is an even type filtration, $\Fil$ is \emph{quasi gr-semistable} if the following two conditions hold:
\begin{enumerate}
\item[ I.] For any saturated Griffiths transverse filtration $$0=W_0\subset W_1 \subset \cdots\subset W_m$$ such that $L_{i-1}\subset W_i\subset L_i$, we have
$$\sum_{0\leq i\leq m}\deg(W_i)\leqslant\sum_{0\leq i\leq m}\deg(L_i).$$

\item[ II.] For any saturated Griffiths transverse flitration $$0\subset S_1 \subset \cdots\subset S_m\subset T_m^\perp\subset\cdots \subset T_1^\perp\subset V$$ such that  $$\deg(S_m)>\sum_{0\leq i\leq m-1}\big(\deg(L_i)-\deg(S_i)\big)+\sum_{0\leq i\leq m}\big(\deg(L_i)-\deg(T_i)\big),$$ we have
$$\nabla(S_m)\subseteq L_m \otimes\Omega_X(\log D),$$
where $L_{i-1}\subset S_i \subset T_i \subset L_i$ for $1\leq i\leq m$.
\end{enumerate}
\end{enumerate}
Furthermore, we say that $(V,\langle \, , \, \rangle,\nabla)$ is \emph{quasi gr-semistable} (or $\nabla$ is a \emph{quasi gr-semistable $\lambda$-connection}) if there exists a quasi gr-semistable filtration on $V$.
\end{defn}

The following two propositions can be proved in the same manner as Propositions \ref{ssregular} and \ref{positive} in the odd rank case.
	
\begin{prop}\label{prop_ssregular_even}
If an orthogonal/symplectic filtration ${\rm Fil}$ of $(V,\langle \, , \, \rangle,\nabla)$ is gr-semistable, then it is also quasi gr-semistable.
\end{prop}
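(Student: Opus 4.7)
The plan is to mimic the proof of Proposition \ref{ssregular} in each of the two possible parities of the filtration length, since the argument is essentially a degree computation inside the graded sheaf that is insensitive to the rank parity.

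First, I would note that by Proposition \ref{prop_HN_fil}, the semistability of the graded orthogonal/symplectic Higgs sheaf $(\Gr_{\Fil}(V),\overline{\langle\,,\,\rangle},\theta)$ is equivalent to the semistability of the underlying Higgs sheaf $(\Gr_{\Fil}(V),\theta)$; combined with $\det(\Gr_{\Fil}(V))^2\cong \sO_X$, this gives $\mu(\Gr_{\Fil}(V))=0$ and hence $\deg(F)\leq 0$ for every Higgs subsheaf $F\subset\Gr_{\Fil}(V)$. For the odd type filtration in even rank, the associated graded still contains the middle piece $L_m^\perp/L_m$, so the analogue of Remark \ref{explainregular} applies verbatim: one forms the Higgs subsheaves $\alpha(W.)$ and $\beta(S.,T.)$, derives $\deg(\alpha(W.))\leq 0$ to obtain Condition I, and uses that $\beta(S.,T.)$ is isotropic (because $S_{m+1}$ is isotropic and $S_i\subset T_i$) together with $\deg(\beta(S.,T.))\leq 0$ to see that the hypothesis in Condition II is never realized.

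For the even type filtration the associated graded is $\Gr_{\Fil}(V)=\bigoplus_{i=1}^{m}L_i/L_{i-1}\oplus\bigoplus_{i=1}^{m}L_{m-i}^\perp/L_{m-i+1}^\perp$, with no middle piece because $L_m=L_m^\perp$. For Condition I I would define
\[
\alpha(W.):=\bigoplus_{i=1}^{m}\frac{W_i+L_{i-1}}{L_{i-1}}\oplus\bigoplus_{i=1}^{m}\frac{L_{m-i}^\perp}{L_{m-i+1}^\perp},
\]
check directly that it is a Higgs subsheaf (Griffiths transversality for $W.$ gives $\theta$-invariance), and compute $\deg(\alpha(W.))$ as in the odd rank case; semistability then yields the inequality $\sum_{i}\deg(W_i)\leq\sum_{i}\deg(L_i)$. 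For Condition II I would set
\[
\beta(S.,T.):=\bigoplus_{i=1}^{m}\frac{S_i}{L_{i-1}}\oplus\bigoplus_{i=1}^{m}\frac{T_{m-i+1}^\perp}{L_{m-i+1}^\perp}
\]
and verify that this is an \emph{isotropic} Higgs subsheaf: the pairing $\frac{L_i}{L_{i-1}}\otimes\frac{L_{i-1}^\perp}{L_i^\perp}\to\sO_X$ restricts to $\frac{S_i}{L_{i-1}}\otimes\frac{T_i^\perp}{L_i^\perp}\to\sO_X$, which vanishes because $S_i\subset T_i$ (and there is no middle component to worry about). Therefore $\deg(\beta(S.,T.))\leq 0$ by semistability, and the hypothesis of Condition II in the even type case is again vacuous.

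The arguments in the two cases together give the proposition. The step I expect to require the most care is the even type case: verifying isotropy of $\beta(S.,T.)$ without the self-paired middle piece, and carefully bookkeeping the degree of $\alpha(W.)$ (in particular the contribution of the dual half, which is independent of the $W_i$) so as to match the inequality in Condition I on the nose. Once these two points are settled, the proof reduces to exactly the same two-line observation that already powers Proposition \ref{ssregular}.
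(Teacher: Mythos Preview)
Your proposal is correct and follows exactly the approach the paper intends: the paper itself gives no separate argument, stating only that the result ``can be proved in the same manner as Proposition~\ref{ssregular},'' and you have faithfully carried out that same degree computation in both the odd and even type cases. Your explicit construction of $\alpha(W.)$ and $\beta(S.,T.)$ for the even type filtration (with no middle piece) is the natural analogue of Remark~\ref{explainregular}, and the verification that $\beta(S.,T.)$ is isotropic via $S_i\subset T_i$ is precisely the point that needs checking.
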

	
\begin{prop}\label{positiveeven}
Let  $$\Fil: 0= L_0 \subsetneq L_1 \subsetneq \cdots \subsetneq L_t = V$$
be a nontrivial quasi gr-semistable filtration  (either of odd or even type) of $(V,\langle \, , \, \rangle,\nabla)$. Then we have $\deg(L_i)\geq 0$. Moreover, $\deg(F)\leq \deg(L_1)$ holds for all subbundles $F\subset L_1.$
\end{prop}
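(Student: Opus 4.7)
The plan is to adapt verbatim the argument of Proposition~\ref{positive} from the odd rank case. The key observation is that Condition~I of quasi gr-semistability has exactly the same shape in the odd-type and even-type branches of Definition~\ref{defn_gr_semi_even}: it only constrains test filtrations $0 = W_0 \subset W_1 \subset \cdots \subset W_m$ sandwiched between consecutive $L_{i-1}$ and $L_i$. Consequently, the parity of the filtration length only enters through a bookkeeping check of Griffiths transversality at the top level $i=m$, and the same two test filtrations used in the odd rank proof do the job.

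First, to establish $\deg(L_{i_0}) \geq 0$ for a fixed $1 \leq i_0 \leq m$, I build the test filtration
\begin{align*}
W_i = L_{i-1} \text{ for } 1 \leq i \leq i_0, \qquad W_i = L_i \text{ for } i_0+1 \leq i \leq m.
\end{align*}
The sandwich $L_{i-1} \subset W_i \subset L_i$ is immediate. Griffiths transversality $\nabla(W_i) \subset W_{i+1} \otimes \Omega_X(\log D)$ follows from $\nabla(L_j) \subset L_{j+1} \otimes \Omega_X(\log D)$; the only non-automatic check is at the seam $i = i_0$, where $\nabla(L_{i_0-1}) \subset L_{i_0} \subset L_{i_0+1} = W_{i_0+1}$. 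A telescoping calculation yields $\sum_{i=0}^{m} \deg(L_i) - \sum_{i=0}^{m} \deg(W_i) = \deg(L_{i_0})$, and Condition~I then forces $\deg(L_{i_0}) \geq 0$. For the second claim $\deg(F) \leq \deg(L_1)$, I take $W_1 = F$ and $W_i = L_i$ for $2 \leq i \leq m$; Griffiths transversality is automatic since $\nabla(F) \subset \nabla(L_1) \subset L_2 \otimes \Omega_X(\log D) = W_2 \otimes \Omega_X(\log D)$, and Condition~I gives the inequality directly.

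The one place where the two filtration types must be handled separately, and the main bookkeeping subtlety I anticipate, is the top-level Griffiths check on the orthogonal/symplectic extension $W_m \subset W_m^\perp \subset \cdots \subset W_0^\perp = V$ of each test filtration. In the odd type one uses $\nabla(L_m) \subset L_m^\perp \otimes \Omega_X(\log D)$ together with $W_m^\perp \supset L_m^\perp$; in the even type one uses $\nabla(L_m) \subset L_{m-1}^\perp \otimes \Omega_X(\log D)$, and when $W_m = L_m$ is Lagrangian the extension simply omits the $W_m^\perp$ step (since $W_m^\perp = W_m$), while if $W_m = L_{m-1}$ then $W_m^\perp = L_{m-1}^\perp \supset L_m$ absorbs $\nabla(L_{m-1})$. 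In every case the extension is Hodge, so Condition~I applies. No new ideas beyond those in Proposition~\ref{positive} are required.
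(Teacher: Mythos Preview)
Your proposal is correct and matches the paper's approach exactly: the paper gives no separate proof of Proposition~\ref{positiveeven} but simply states it ``can be proved in the same manner as Proposition~\ref{positive}'', and you carry out precisely that argument with the same two test filtrations. Your final paragraph about extending the test filtration to a full orthogonal/symplectic Hodge filtration is unnecessary, since Condition~I in Definition~\ref{defn_gr_semi_even} only requires a saturated Griffiths transverse partial filtration $0=W_0\subset\cdots\subset W_m$ with $L_{i-1}\subset W_i\subset L_i$, so no top-level check on $\nabla(W_m)$ is needed.
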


As we did in \S\ref{sect_odd}, we have an operator $\xi$ on the set of quasi gr-semistable  filtrations.
\begin{enumerate}
\item If
$$\Fil: 0= L_0 \subsetneq L_1 \subsetneq \cdots \subsetneq L_m \subsetneq L_m^\perp\subsetneq\cdots  \subsetneq L_1^\perp\subsetneq L_0^\perp= V$$
is of odd type, we define the  orthogonal Higgs sheaf $(\Gr_{\Fil}(V), \overline{\langle \, , \, \rangle},\theta)$, where $$\Gr_{\Fil}(V)=\mathop{\bigoplus}\limits_{i=1}^{m} \frac{L_i}{L_{i-1}}\oplus \frac{L_m^\perp}{L_m}\oplus \mathop{\bigoplus}\limits_{i=1}^{m} \frac{L_{m-i}^\perp}{L_{m-i+1}^\perp}.$$
If $(\Gr_{\Fil}(V),\theta)$ is not semistable, we have a maximal destabilizer $M_{\Fil}$ and by Lemma \ref{destabilizer}, we have
$$M_{\Fil}=\mathop{\bigoplus}\limits_{i=1}^{m} \frac{M_i}{L_{i-1}}\oplus \frac{M_{m+1}}{L_m}\oplus \mathop{\bigoplus}\limits_{i=1}^{m} \frac{N_{m-i+1}^\perp}{L_{m-i+1}^\perp},$$
where $L_{i-1}\subset M_i\subset N_i\subset L_i$ for $1\leq i\leq m$ and $M_{m+1}$ is isotropic. Based on the maximal destablizer, we construct a new filtration $\xi(\Fil)$ as follows:
$$\xi(\Fil):0=M_0\subset M_1\subset\cdots\subset M_{m+1}\subset  M_{m+1}^\perp\subset\cdots\subset M_1^\perp\subset M_0^\perp=V.$$
		
\item If $$\Fil: 0= L_0 \subsetneq L_1 \subsetneq \cdots\subsetneq L_{m-1} \subsetneq L_m \subsetneq L_{m-1}^\perp\subsetneq\cdots  \subsetneq L_1^\perp\subsetneq L_0^\perp= V$$ is of even type, we define $(\Gr_{\Fil}(V),\langle \ \rangle,\theta)$ in a similar way, where $$\Gr_{\Fil}(V)=\mathop{\bigoplus}\limits_{i=1}^{m} \frac{L_i}{L_{i-1}}\oplus \mathop{\bigoplus}\limits_{i=1}^{m} \frac{L_{m-i}^\perp}{L_{m-i+1}^\perp}.$$
If $(Gr_{\Fil}(V),\theta)$ is not semistable, we have the maximal destabilizer
$$M_{\Fil}=\mathop{\bigoplus}\limits_{i=1}^{m} \frac{M_i}{L_{i-1}}\oplus \mathop{\bigoplus}\limits_{i=1}^{m} \frac{N_{m-i+1}^\perp}{L_{m-i+1}^\perp},$$
where $L_{i-1}\subset M_i\subset N_i\subset L_i$ for $1\leq i\leq m$. Then, the new filtration $\xi(\Fil)$ is defined as
$$\xi(\Fil):0=M_0\subset M_1\subset\cdots\subset M_m \subset M_{m+1}\subset  M_m^\perp\subset\cdots\subset M_1^\perp\subset M_0^\perp=V,$$
where $M_{m+1}:=L_m$.
\end{enumerate}

\begin{lem}
Let $\Fil$ be a quasi gr-semistable filtration of odd type (resp. even type), then $\xi(\Fil)$ is  a Hodge filtration of odd type (resp. even type). Moreover, $\xi(\Fil)$ is also quasi gr-semistable of odd type (resp. even type).
\end{lem}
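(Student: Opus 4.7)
The plan is to mimic the proof of Lemma \ref{preserve} from the odd rank case, handling the odd and even type filtrations in parallel. The key observation is that $\xi(\Fil)$ is built by inserting the components of the maximal destabilizer $M_\Fil$ of the associated graded Higgs sheaf, and the two conditions in the definition of quasi gr-semistability of $\Fil$ are exactly what is needed to ensure that this insertion both preserves Griffiths transversality and propagates quasi gr-semistability.

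First I would check that $\xi(\Fil)$ is a Hodge filtration. The Griffiths transversality $\nabla(M_i) \subseteq M_{i+1} \otimes \Omega_X(\log D)$ away from the middle is immediate, since this is equivalent to the Higgs-subsheaf condition that $M_\Fil$ satisfies inside $\Gr_{\Fil}(V)$, combined with the Griffiths transversality for $\Fil$ itself. The only nontrivial requirement is the middle step: for odd type, $\nabla(M_{m+1}) \subseteq M_{m+1}^\perp \otimes \Omega_X(\log D)$, and for even type, $\nabla(M_m) \subseteq L_m \otimes \Omega_X(\log D)$ where $L_m = M_{m+1}$. In the odd case, setting $S_i = M_i$, $T_i = N_i$, $S_{m+1} = M_{m+1}$ realizes $M_\Fil$ itself as the Higgs subsheaf $\beta(S.,T.)$ of Remark \ref{explainregular}, and since $M_\Fil$ destabilizes we have $\deg(\beta(S.,T.)) > 0$; this triggers Condition II for $\Fil$ and yields the required transversality. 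The even type analogue uses the even-type Condition II applied to the analogous data.

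For the quasi gr-semistability of $\xi(\Fil)$, I would verify the two conditions by the pattern of Lemma \ref{preserve}. For Condition I, given a Griffiths transverse filtration $0 = W_0 \subset W_1 \subset \cdots$ with $M_{i-1} \subset W_i \subset M_i$, I would form the Higgs subsheaf $\sigma(W.) \subseteq M_\Fil$ of $\Gr_{\Fil}(V)$ by placing $(W_i + L_{i-1})/L_{i-1}$ in the $i$-th slot and keeping the other components of $M_\Fil$; maximality of $M_\Fil$ combined with Condition I for $\Fil$ applied to the auxiliary filtration $\{W_i \cap L_{i-1}\}$ then yields the Condition I inequality for $\xi(\Fil)$. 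For Condition II, given a filtration $\Fil_{ST}$ at the level of $\xi(\Fil)$ whose isotropic middle term has degree exceeding the required bound, the descent $\tilde S_i := S_{i+1} \cap L_i$, $\tilde T_i := T_{i+1} \cap L_i$, together with carrying over the isotropic middle term, produces a filtration $\Fil_{\tilde S \tilde T}$ at the level of $\Fil$; chaining Condition I for $\Fil$ with the degree hypothesis on $\Fil_{ST}$ shows that $\Fil_{\tilde S \tilde T}$ meets the hypothesis of Condition II for $\Fil$, whence the Griffiths transversality descends to the required statement at the level of $\xi(\Fil)$.

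The main obstacle I expect is bookkeeping in the even type case. The Lagrangian identity $L_m = L_m^\perp$ forces systematic index shifts, and the even-type Condition II carries an asymmetric sum (one fewer term in the $\deg(L_i) - \deg(S_i)$ piece and with the Griffiths conclusion $\nabla(S_m) \subseteq L_m \otimes \Omega_X(\log D)$ rather than $S_m^\perp$), so one must re-verify that the descent $S_i \mapsto S_{i+1} \cap L_i$ still produces a filtration whose middle degree exceeds the matching even-type bound. Once this is set up symbolically the calculation is routine, but it is the sole place where the odd and even arguments visibly diverge.
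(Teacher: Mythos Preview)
Your proposal is correct and matches the paper's approach exactly: the paper's proof of this lemma is the single sentence ``The proof of this lemma is similar to Lemma \ref{preserve},'' and your plan to rerun that argument with the parity-dependent bookkeeping is precisely what is intended. Your identification of the even-type index shifts (the asymmetric sum in Condition II and the conclusion $\nabla(S_m)\subseteq L_m\otimes\Omega_X(\log D)$) as the only place where genuine care is needed is accurate.
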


\begin{proof}
The proof of this lemma is similar to Lemma \ref{preserve}.
\end{proof}
	
\begin{lem}
Let $\Fil$ be a quasi gr-semistable filtration of $(V,\langle \, , \, \rangle,\nabla)$ in the even rank case. Assume that $(\Gr_{\Fil}(V),\theta_1)$ and $(\Gr_{\xi(\Fil)}(V),\theta_2)$ are not semistable. Let $M_{\Fil}$ (resp. $M_{\xi(\Fil)})$ be the maximal destabilizer of $(\Gr_{\Fil}(V),\theta_1)$ (resp. $(\Gr_{\xi(\Fil)}(V),\theta_2))$.  Then we have a natural morphism
$$\Phi: M_{\xi(\Fil)}\longrightarrow M_{\Fil}$$
such that
\begin{enumerate}
\item $\mu(M_{\xi(\Fil)})\leq \mu(M_{\Fil})$,

\item if $\mu(M_{\xi(\Fil)})= \mu(M_{\Fil})$, the morphism $\Phi$ is injective, and then we have $\rk(M_{\xi(\Fil)})\leq\rk(M_{\Fil})$,

\item if $\big(\mu(M_{\xi(\Fil)},\ \rk(M_{\xi(\Fil)})\big)= \big(\mu(M_{\Fil}),\rk(M_{\Fil})\big)$, then $\Phi$ is an isomorphism on an open subset $U \subset X$ with ${\rm codim}(X/U) \geq 2$.
\end{enumerate}
\end{lem}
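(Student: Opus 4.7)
The plan is to adapt the proof of Lemma~\ref{decrease} essentially verbatim, splitting into subcases according to whether $\Fil$ is of odd or even type. Since the preceding lemma guarantees that $\xi$ preserves the type, in each subcase I would unpack the explicit descriptions of $\Gr_\Fil(V)$, $M_\Fil$, $\Gr_{\xi(\Fil)}(V)$ and $M_{\xi(\Fil)}$ laid out just above this lemma, keeping the indexing $L_{i-1} \subset M_i \subset N_i \subset L_i$ and $M_{i-1} \subset X_i \subset Y_i \subset M_i$. The only structural difference between the two subtypes is the Lagrangian identification $M_{m+1} = L_m$ in the even type, which collapses the middle summands of both $\Gr_\Fil(V)$ and $M_\Fil$.

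In both subtypes the morphism $\Phi$ would be constructed componentwise from the natural maps $\Phi_i : X_i/M_{i-1} \to M_i/L_{i-1}$, induced by the inclusions $X_i \subset M_i$ and $M_{i-1} \subset L_{i-1}$, together with zero maps on all remaining summands of $M_{\xi(\Fil)}$ (the ``right'' pieces $Y_{m+2-j}^\perp/M_{m+2-j}^\perp$, plus the extra ``tail'' piece $X_{m+2}/M_{m+1}$ in the odd subtype or $X_{m+1}/M_m$ in the even subtype). Higgs compatibility with $\theta_1$ and $\theta_2$ is immediate from their common construction out of $\nabla$. If $\Phi \neq 0$, maximality of $M_\Fil$ as a destabilizer of $(\Gr_\Fil(V), \theta_1)$ gives $\mu(M_{\xi(\Fil)}) \leq \mu(\mathrm{Im}\,\Phi) \leq \mu(M_\Fil)$, establishing assertion~(1) in this case.

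The substantive step is the case $\Phi = 0$, which by vanishing of all $\Phi_i$ forces $X_i \subset L_{i-1}$ for every relevant $i$. I would then build an auxiliary morphism $\Psi : M_{\xi(\Fil)} \to \Gr_\Fil(V)/M_\Fil$ with components $\Psi_i : X_i/M_{i-1} \to L_{i-1}/M_{i-1}$ for $i \geq 2$, together with the obvious maps on the ``middle'' and ``right'' summands (in the even subtype $\Psi_{m+1} : X_{m+1}/M_m \to L_m/M_m$, replacing the role of $\Psi_{m+2}$ in the odd subtype). The key claim is $\Psi \neq 0$: if $\Psi = 0$, then $X_i = M_{i-1}$ for all $i$ and $M_{\xi(\Fil)}$ reduces to the ``right'' summands $\bigoplus_i Y_{m+2-i}^\perp/M_{m+2-i}^\perp$, whose total degree equals $\sum_i (\deg(M_i) - \deg(Y_i))$ by the duality of $\langle\,,\,\rangle$; applying Condition~I of quasi gr-semistability to $\xi(\Fil)$ (valid by the preceding lemma) with the filtration $W_i = Y_i$ then forces this sum to be $\leq 0$, contradicting $\deg(M_{\xi(\Fil)}) > 0$. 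Hence $\Psi \neq 0$ and $\mu(M_{\xi(\Fil)}) \leq \mu(\mathrm{Im}\,\Psi) < \mu(M_\Fil)$ since every subsheaf of $\Gr_\Fil(V)/M_\Fil$ has slope strictly less than $\mu(M_\Fil)$. Assertions~(2) and~(3) follow formally: equality of slopes forces $\Phi \neq 0$ and hence injective (by simplicity-type arguments for the maximal destabilizer), and equality of both slope and rank forces $\Phi$ to be an isomorphism on the complement of a codimension-$\geq 2$ subset.

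The main obstacle I anticipate is bookkeeping in the even subtype, where the Lagrangian constraint $M_{m+1} = L_m$ makes the extra summand $X_{m+1}/M_m$ of $M_{\xi(\Fil)}$ have no natural target in $M_\Fil$. One must verify that absorbing it into the kernel of $\Phi$ and routing it through $L_m/M_m$ in $\Psi$ is Higgs-compatible, and that the final duality computation of degrees on the ``right'' summands matches Condition~I from Definition~\ref{defn_gr_semi_even} rather than Definition~\ref{defn_gr_semi_odd}. Once the indices are lined up correctly, the contradiction step goes through identically to the odd rank case.
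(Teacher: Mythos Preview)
Your proposal is correct and follows exactly the approach the paper intends: its own proof is the single line ``This lemma is an analogue of Lemma~\ref{decrease}'', and you have carried out precisely that analogue, correctly handling the split into odd and even type and the Lagrangian collapse $M_{m+1}=L_m$ in the latter. One minor slip: the degree of the residual piece $\bigoplus_i Y_{m+2-i}^\perp/M_{m+2-i}^\perp$ is $\sum_i(\deg Y_i-\deg M_i)$, not $\sum_i(\deg M_i-\deg Y_i)$, since $\deg(F^\perp)=\deg(F)$ for isotropic $F$; with the correct sign Condition~I for $\xi(\Fil)$ still yields $\leq 0$ and the contradiction goes through as you state.
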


\begin{proof}
This lemma is an analogue of Lemma \ref{decrease}.
\end{proof}
	
\begin{thm}\label{main thm even}
Let $(V,\langle \, , \, \rangle,\nabla)$ be semistable and quasi gr-semistable with even rank. There exsits a gr-semistable filtration on $(V,\langle \, , \, \rangle,\nabla)$.
\end{thm}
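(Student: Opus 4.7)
The plan is to follow the odd-rank proof of Theorem \ref{main thm} essentially verbatim, exploiting the fact that the paper has already established the even-rank analogues of Lemmas \ref{preserve} and \ref{decrease} immediately before this theorem. Concretely, pick any quasi gr-semistable filtration $\Fil$ of $(V,\langle\,,\,\rangle,\nabla)$, which exists by hypothesis, and iterate the operator $\xi$ to produce filtrations $\xi^k(\Fil)$ for $k\geq 0$; by the even-rank version of the preservation lemma, each of them is again a quasi gr-semistable Hodge filtration of the same parity type as $\Fil$.

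Next, apply the even-rank version of Lemma \ref{decrease} to see that the sequence $\bigl(\mu(M_{\xi^k(\Fil)}),\ \rk(M_{\xi^k(\Fil)})\bigr)$ is non-increasing in the lexicographic order as $k$ grows. Because the ranks are bounded by $\rk V$ and the slopes of potential destabilizing subsheaves of $V$ form a discrete set determined by $\rk V$ and $\deg V$, this sequence must become constant for $k\geq k_0$ for some integer $k_0$.

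The only remaining step is to establish an even-rank analogue of Lemma \ref{lem_const}: once the invariants stabilize, either $(\Gr_{\xi^{k_0}(\Fil)}(V),\theta)$ is semistable, in which case $\xi^{k_0}(\Fil)$ is the sought gr-semistable filtration and we are done, or else by iterating $\xi$ a further sufficient number of times the natural morphisms $M_{\xi^{k+1}(\Fil)}\to M_{\xi^k(\Fil)}$ become generic isomorphisms, and their composition carves out a saturated $\nabla$-invariant subsheaf $Z\subset V$ with $\mu(Z)=\mu(M_{\xi^k(\Fil)})>0$ for large $k$. This contradicts the semistability of $(V,\langle\,,\,\rangle,\nabla)$, so we must be in the first alternative.

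The main technical obstacle I expect is handling the \emph{even type} filtration $0\subsetneq L_1\subsetneq\cdots\subsetneq L_{m-1}\subsetneq L_m=L_m^\perp\subsetneq\cdots\subsetneq V$, where the middle term is already Lagrangian. Here the operator $\xi$ does not genuinely extend the filtration across the Lagrangian but instead just inserts $M_{m+1}:=L_m$, so when transcribing the odd-rank proof one must (i) verify that the auxiliary morphism $\Psi$ used to preclude $\Phi=0$ remains non-zero, this time appealing to Condition II of Definition \ref{defn_gr_semi_even} for even-type filtrations which controls $\nabla(S_m)\subseteq L_m\otimes\Omega_X(\log D)$ rather than an isotropic extension across the Lagrangian, and (ii) check that after stabilization the composite morphisms still produce a genuine $\nabla$-invariant subsheaf rather than collapsing across repeated indices at the Lagrangian. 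Once these bookkeeping points are handled separately for odd-type and even-type filtrations, the argument of Theorem \ref{main thm} transcribes mechanically and yields the conclusion.
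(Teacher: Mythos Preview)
Your proposal is correct and follows exactly the approach of the paper: the paper's own proof of this theorem is the single sentence ``This theorem is an analogue of Theorem \ref{main thm},'' so your explicit transcription of the odd-rank argument via the even-rank versions of Lemmas \ref{preserve}, \ref{decrease}, and \ref{lem_const} is precisely what is intended, and in fact more detailed than what the paper itself supplies.
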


\begin{proof}
This theorem is an analogue of Theorem \ref{main thm}.
\end{proof}

Combining Proposition \ref{prop_ssregular_even} with Theorem \ref{main thm even}, we have the main result in the even rank case.
\begin{thm}\label{thm_even}
Let $(V,\langle \, , \, \rangle,\nabla)$ be a  semistable orthogonal/symplectic sheaf of even rank together with a $\lambda$-connection. Then $(V,\langle \, , \, \rangle,\nabla)$ is gr-semistable if and only if $(V,\langle \, , \, \rangle,\nabla)$ is quasi gr-semistable.
\end{thm}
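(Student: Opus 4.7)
The plan is to prove the theorem as the combination of the two directions, mirroring the argument of Theorem \ref{thm_gr_and_quasi_gr_odd} in the odd rank case.

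For the easy direction, that gr-semistability implies quasi gr-semistability, I would invoke Proposition \ref{prop_ssregular_even}. The argument is the same as for Proposition \ref{ssregular}: if the graded orthogonal/symplectic Higgs sheaf $(\Gr_{\Fil}(V), \overline{\langle\,,\,\rangle}, \theta)$ is semistable then every (isotropic) Higgs subsheaf has nonpositive degree, and the constructions $\alpha(W.)$ and $\beta(S.,T.)$ are Higgs subsheaves, so Conditions I and II hold trivially. This argument is insensitive to whether $\Fil$ is of odd or even type.

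For the substantive direction, semistable plus quasi gr-semistable implies gr-semistable, I would appeal to Theorem \ref{main thm even}, whose proof is laid out by reduction to the three lemmas in this section in exact parallel with the odd rank case. Start from any quasi gr-semistable filtration $\Fil$; apply the operator $\xi$ repeatedly. The type (odd or even) is preserved by $\xi$ by the analogue of Lemma \ref{preserve}. The analogue of Lemma \ref{decrease} produces, whenever $\Gr_{\Fil}(V)$ and $\Gr_{\xi(\Fil)}(V)$ both fail to be semistable, a natural morphism of graded Higgs sheaves $\Phi: M_{\xi(\Fil)} \to M_{\Fil}$ with the stated monotonicity properties, so the sequence $\{(\mu(M_{\xi^k(\Fil)}), \rk(M_{\xi^k(\Fil)}))\}_k$ is weakly decreasing in lexicographic order. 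Since ranks lie in a bounded discrete set and slopes are bounded in view of Proposition \ref{positiveeven}, the sequence stabilizes at some $k_0$. The analogue of Lemma \ref{lem_const} then shows that a constant tail forces either semistability of $(\Gr_{\xi^{k_0}(\Fil)}(V), \theta)$ or produces a $\nabla$-invariant subsheaf of positive slope, contradicting the assumed semistability of $(V,\langle\,,\,\rangle,\nabla)$.

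The main obstacle, and the only genuinely new work, is in the even type case, where the middle sheaf $L_m$ is Lagrangian ($L_m = L_m^\perp$) and Griffiths transversality reads $\nabla(L_m) \subseteq L_{m-1}^\perp \otimes \Omega_X(\log D)$. The construction of $\xi(\Fil)$ in this case inserts $M_{m+1} := L_m$, and the corresponding Condition II of Definition \ref{defn_gr_semi_even} requires the weaker containment $\nabla(S_m) \subseteq L_m \otimes \Omega_X(\log D)$ rather than a self-orthogonality statement. When constructing $\Phi$ and verifying $\Psi \neq 0$ in this setting, the Lagrangian block changes the indexing of direct summands in $\Gr_{\Fil}(V)$ and $M_{\Fil}$, and one must take care that $M_{m+1} = L_m$ is handled correctly in both the source and target of $\Phi$. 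The combinatorics is slightly different, but the degree inequalities used in the odd rank proof carry over once one tracks the Lagrangian summand separately. Apart from this bookkeeping, the argument is line-for-line the same as in \S\ref{sect_odd}, justifying the authors' remark that only constructions and statements need to be given in this section.
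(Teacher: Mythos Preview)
Your proposal is correct and follows essentially the same approach as the paper: the proof of Theorem \ref{thm_even} in the paper is literally the one-line combination of Proposition \ref{prop_ssregular_even} (gr-semistable $\Rightarrow$ quasi gr-semistable) with Theorem \ref{main thm even} (semistable and quasi gr-semistable $\Rightarrow$ gr-semistable), and you have identified both ingredients and their proofs correctly. Your additional discussion of the even-type bookkeeping (Lagrangian $L_m$, the insertion $M_{m+1}:=L_m$, and the modified Condition II) goes beyond what the paper spells out but accurately describes the adjustments needed to carry the \S\ref{sect_odd} arguments through.
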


\section{Classification of Quasi Gr-semistable Orthogonal $\lambda$-Connections in Small Ranks}\label{sect_small_rank}

In this section, $(V, \langle \, , \, \rangle, \nabla)$ is an orthogonal sheaf together with an orthogonal $\lambda$-connection $\nabla$. We make a careful discussion on the existence of quasi gr-semistable filtrations when $\rk (V) \leq 6$. We prove that $(V, \langle \, , \, \rangle, \nabla)$ is always quasi gr-semistable when $\rk(V) \leq 4$ (Proposition \ref{rank4}), and we classify quasi gr-semistable $\lambda$-connections when $\rk(V) = 5 \text{ and }6$ (Proposition \ref{irregular}, \ref{classfication5} and \ref{classfication6}). Moreover, Proposition \ref{irregular} gives a construction of $(V, \langle \, , \, \rangle, \nabla)$ which is not quasi gr-semistable when $\rk(V) \geq 5$.

\subsection{Case I: $\boldsymbol{\rk(V) \leq 4}$}
We start with a discussion of rank one isotropic saturated subsheaves $N\subset V$. Note that if a global section of $N$ vanishes on an open subset $U$, then it is trivial. For this reason, it is enough to work on an open subset $U \subset X$, on which $N|_U$ is locally free. Let $e \in N(U)$ be a local section. We have
$$\langle \nabla(e),e \rangle+\langle e,\nabla(e) \rangle=\lambda d\langle e,e \rangle=0,$$
which implies $\langle \nabla(e),e \rangle=0$. Therefore,
$$\nabla(N)\subset N^\perp\otimes\Omega_X(\log D)$$
for any isotropic saturated subsheaf $N$ of rank one. Moreover, this property does not hold in the symplectic case, i.e. $\langle \, , \, \rangle$ is skew-symmetric. Based on this fact, we have the following lemma about a special orthogonal filtration
$$0\subset N\subset N^\perp\subset V.$$	
	
\begin{lem}\label{criterion}
Given $(V,\langle \, , \, \rangle,\nabla)$, let $N$ be a rank one isotropic saturated subsheaf. The filtration
$$0\subset N\subset N^\perp\subset V$$
is quasi gr-semistable if and only if the following two conditions hold:
\begin{enumerate}
\item $\deg(N)\geq 0,$

\item if an isotropic saturated subsheaf $W$ containing N satisfies one of the following conditions:
\begin{enumerate}
\item $\deg(W)>2\deg(N);$

\item $\deg(N)<\deg(W)\leq2\deg(N),$ and $\nabla(N)\subset W^\perp\otimes \Omega_X(\log D);$

\item $0<\deg(W)\leq \deg(N),$ and $\nabla(N)\subset W\otimes \Omega_X(\log D),$
\end{enumerate}
then we must have $\nabla(W)\subset W^\perp\otimes \Omega_X(\log D).$
\end{enumerate}
\end{lem}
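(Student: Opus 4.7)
The plan is to match the two conditions of the lemma against Condition I and Condition II of Definition \ref{defn_gr_semi_odd} specialized to $m=1$ with $L_1 = N$ (as summarized in Example \ref{smallm}). Since $N$ has rank one and is saturated, the only saturated subsheaves of $N$ are $0$ and $N$ itself, so all filtration data in the definition collapses to a finite combinatorial check. First I would dispose of Condition I: the only nontrivial test is $W_1 = 0$, and the inequality $\deg(W_1) \leq \deg(L_1)$ becomes exactly $\deg(N) \geq 0$, giving part (1) of the lemma.

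For Condition II, the pair $(S_1, T_1)$ with $S_1 \subset T_1 \subset N$ must lie in $\{(0,0),\,(0,N),\,(N,N)\}$, and the degree threshold $\deg(S_2) > 2\deg(N) - \deg(S_1) - \deg(T_1)$ reads respectively $\deg(S_2) > 2\deg(N)$, $>\deg(N)$, $>0$. These are precisely the three degree ranges of (a), (b), (c). The key technical ingredient that makes the dictionary work is the orthogonal duality built from the compatibility axiom: for an isotropic $W \supset N$ we have $\langle w, n\rangle = 0$ for $w \in W$, $n \in N$, so $\langle \nabla w, n\rangle = -\langle w, \nabla n\rangle$, which gives the equivalence
\[
\nabla(W) \subset N^\perp \otimes \Omega_X(\log D) \iff \nabla(N) \subset W^\perp \otimes \Omega_X(\log D).
\]
Under this duality the Griffiths transversality constraint $\nabla(S_2) \subset T_1^\perp \otimes \Omega$ attached to the choice $T_1 = N$ translates into the readable hypothesis $\nabla(N) \subset W^\perp$ of part (b); the choice $S_1 = N$ additionally requires Griffiths transversality on $S_1$, which reads $\nabla(N) \subset S_2 = W$, the extra hypothesis of part (c); and $(0,0)$ imposes no further constraint, yielding (a).

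With this dictionary in hand the two directions are symmetric bookkeeping. For the "only if" direction, given $W$ satisfying one of (a), (b), (c), I exhibit the corresponding data $\Fil_{ST}$ with $(S_1, T_1)$ equal to $(0,0)$, $(0,N)$, $(N,N)$ respectively and $S_2 = W$, check the degree inequality and Griffiths transversality (the latter via the equivalence above), and invoke quasi gr-semistability to obtain $\nabla(W) \subset W^\perp$. For the "if" direction, given Griffiths transverse data from Condition II, I set $W := S_2$ and classify by $(S_1, T_1)$ together with the size of $\deg(W)$ relative to $\deg(N)$ and $2\deg(N)$, invoking the appropriate clause (a), (b), or (c) in each subcase. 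The only subtlety worth flagging—and the nearest thing to an obstacle—is that when $(S_1, T_1) = (N,N)$ with $\deg(N) < \deg(W) \leq 2\deg(N)$, one does not get (c) directly (its upper bound is violated), and one must instead upgrade the hypothesis by noting that $\nabla(N) \subset W \subset W^\perp$ by isotropy, so that the assumption of (b) is satisfied. All other cases are immediate, and no step involves any nontrivial computation beyond the orthogonal duality above.
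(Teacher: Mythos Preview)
Your proof is correct and follows exactly the approach the paper intends: the paper's own proof is the single sentence ``This is a direct consequence of Example \ref{smallm},'' and your argument is precisely the careful unpacking of that sentence, enumerating the three possibilities $(S_1,T_1)\in\{(0,0),(0,N),(N,N)\}$ and matching them to clauses (a), (b), (c). The orthogonal duality $\nabla(W)\subset N^\perp\Leftrightarrow\nabla(N)\subset W^\perp$ that you isolate is the key step the paper leaves implicit, and your handling of the overlap subcase $(N,N)$ with $\deg(N)<\deg(W)\le 2\deg(N)$ via $W\subset W^\perp$ is the right patch.
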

	
\begin{proof}	
This is a direct consequence of Example \ref{smallm}.
\end{proof}

\begin{lem}\label{cork1}
Given $(V,\langle \, , \, \rangle,\nabla)$,  let $N \subset W \subset V$ be isotropic saturated subsheaves of $V$ such that $\rk(N)=\rk(W)-1.$ Then, $\nabla(W)\subset W^\perp\otimes \Omega_X(\log D)$ if and only if $\nabla(N)\subset W^\perp\otimes \Omega_X(\log D)$.
\end{lem}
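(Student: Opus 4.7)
The forward direction is immediate from $N\subset W$. For the converse I would work locally: since $N$ and $W$ are both saturated in $V$ with $\rk(W)=\rk(N)+1$, the quotient $W/N$ is torsion free of rank one, so on some open $U\subset X$ with $\mathrm{codim}(X\setminus U)\geq 2$ each of $V$, $N$, $W$, $W/N$ is locally free. Because $W^{\perp}\otimes \Omega_X(\log D)$ is reflexive, the inclusion $\nabla(W)\subset W^{\perp}\otimes \Omega_X(\log D)$ may be verified after restriction to $U$. On $U$ I would pick a local section $e$ of $W$ whose image generates $(W/N)|_U$, so that every local section of $W$ is of the form $n+fe$ with $n\in N$ and $f\in \mathcal{O}_U$. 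Applying the $\lambda$-Leibniz rule, $\nabla(n+fe)=\nabla(n)+f\nabla(e)+\lambda\, e\otimes df$; the first summand lies in $W^{\perp}\otimes \Omega_X(\log D)$ by hypothesis, and the third lies there because $e\in W\subset W^{\perp}$ by isotropy of $W$, so the question reduces to showing $\nabla(e)\in W^{\perp}\otimes \Omega_X(\log D)$.

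This can be tested by pairing $\nabla(e)$ against sections of $N$ and against $e$ itself, which together generate $W$ locally. For any $n\in N(U)$, the compatibility rule of Definition \ref{defn_lambda_conn} gives $\langle \nabla(e),n\rangle+\langle e,\nabla(n)\rangle=\lambda\, d\langle e,n\rangle$; here $\langle e,n\rangle=0$ by isotropy of $W$, and $\langle e,\nabla(n)\rangle=0$ because $\nabla(n)\in W^{\perp}\otimes \Omega_X(\log D)$ and $e\in W$, so $\langle \nabla(e),n\rangle=0$. Pairing $\nabla(e)$ with $e$ and using symmetry of $\langle\,,\,\rangle$, compatibility yields $2\langle \nabla(e),e\rangle=\lambda\, d\langle e,e\rangle=0$, hence $\langle \nabla(e),e\rangle=0$. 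Together these show $\nabla(e)\in W^{\perp}\otimes \Omega_X(\log D)$ on $U$, and reflexivity extends the inclusion globally.

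The only real subtlety is the step where we divide by $2$: the symmetric calculation forces $\langle \nabla(e),e\rangle=0$ only when $\mathrm{char}(k)\neq 2$, which is implicit throughout the orthogonal portion of the paper (compare the discussion preceding Lemma \ref{criterion}, which makes the same observation for rank-one isotropic subsheaves, and the hypothesis $2<p$ in the motivating example). Apart from this characteristic convention, the argument amounts to unwinding the $\lambda$-Leibniz rule plus a routine codimension-two extension, so I do not anticipate any serious obstacle.
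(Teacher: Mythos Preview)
Your argument is correct and follows essentially the same route as the paper's own proof: reduce to a local computation, split off a single generator $e$ of $W$ modulo $N$, and use the compatibility relation together with symmetry of the pairing to kill $\langle \nabla(e),e\rangle$ and $\langle \nabla(e),n\rangle$. Your version is in fact slightly more careful in invoking reflexivity to pass to the locally free locus and in flagging the implicit $\mathrm{char}(k)\neq 2$ hypothesis, but otherwise the two proofs coincide.
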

\begin{proof}
One direction is clear, and we only have to prove that if $\nabla(N)\subset W^\perp\otimes \Omega_X(\log D)$, we have $\nabla(W)\subset W^\perp\otimes \Omega_X(\log D)$. Since this is a local property, we work on an open subset $U \subset X$, on which $N$ is locally free. For convenience, we directly suppose that $$N=\mathcal{O}_X\{e_1, e_2,\dots e_n\},\quad W=\mathcal{O}_X\{e_1, e_2,\dots e_n, e_{n+1}\}.$$
Furthermore, $\nabla(W)\subset W^\perp\otimes \Omega_X(\log D)$ if and only if
$$\langle \nabla(W),W \rangle=0$$
holds. Therefore, it is equivalent to prove that  $\langle \nabla(e_i), e_j \rangle=0$ holds for all $1\leq i,j\leq n+1.$ Since $\nabla(N)\subset W^\perp\otimes \Omega_X(\log D)$, then $\langle \nabla(e_i), e_j \rangle=0$ holds for all $1\leq i\leq n$ and $1\leq j\leq n+1.$ Thus, we only have to check $\langle \nabla(e_{n+1}), e_i \rangle=0$ for $1\leq i\leq n+1$. The equality
$$\langle \nabla(e_{n+1}), e_{n+1} \rangle+\langle e_{n+1}, \nabla(e_{n+1}) \rangle=\lambda d\langle e_{n+1},e_{n+1} \rangle=0$$
implies that $\langle \nabla(e_{n+1}), e_i \rangle=0$ holds for $i= n+1$, and the equality $$\langle \nabla(e_{n+1}), e_i \rangle+\langle e_{n+1}, \nabla(e_i) \rangle=\lambda d\langle e_{n+1},e_i \rangle=0$$
and the condition $\langle e_{n+1}, \nabla(e_i) \rangle=0$ implies that $\langle \nabla(e_{n+1}), e_i \rangle=0$ holds for all $1\leq i\leq n$. This finishes the proof of this lemma.
\end{proof}

Lemma \ref{criterion} and Lemma \ref{cork1} give the following result:
\begin{lem}\label{criterionrk5}
Suppose that $\rk(V) \leq 5$. Let $N$ be a rank one isotropic saturated subsheaf. Then the filtration $$0\subset N\subset N^\perp\subset V$$
is a quasi gr-semistable filtration of $(V, \langle \, , \, \rangle, \nabla)$ if and only if the following two conditions hold:
\begin{enumerate}
\item $\deg(N)\geq 0,$

\item if $W$ is a rank two isotropic saturated subsheaf such that $N \subset W$ and $\deg(W)>2\deg(N)$, then we have $\nabla(W)\subset W^\perp\otimes \Omega_X(\log D).$
\end{enumerate}
\end{lem}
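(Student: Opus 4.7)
The plan is to deduce this from Lemma \ref{criterion} by exploiting the rank constraint to collapse cases (b) and (c) of that lemma. The two key facts I would use are: first, since $\rk(V) \leq 5$, any isotropic saturated subsheaf of $V$ has rank at most $2$, so any isotropic saturated $W$ containing $N$ has $\rk(W) \in \{1,2\}$, with $\rk(W) = 1$ forcing $W = N$ (as $N$ is saturated of the same rank); second, when $\rk(W) = 2$, Lemma \ref{cork1} applies to the pair $N \subset W$, yielding
$$\nabla(W) \subset W^{\perp} \otimes \Omega_X(\log D) \iff \nabla(N) \subset W^{\perp} \otimes \Omega_X(\log D).$$

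For the forward direction I would simply specialize Lemma \ref{criterion}: condition $\deg(N) \geq 0$ is kept verbatim, and restricting case (a) of that lemma to rank two $W$ yields condition 2 of the present statement.

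For the backward direction, I would verify each of the three cases of Lemma \ref{criterion} under the new hypotheses. If $W = N$, then condition $\deg(N) \geq 0$ rules out case (a) (which would force $\deg(N) > 2\deg(N)$) and case (b) is vacuous (which would force $\deg(N) < \deg(N)$); for case (c) the required conclusion $\nabla(N) \subset N^{\perp} \otimes \Omega_X(\log D)$ holds automatically, as remarked at the opening of \S\ref{sect_small_rank} for any rank one isotropic saturated subsheaf. When $\rk(W) = 2$, case (a) is exactly the new condition 2; case (b) follows directly from the equivalence in Lemma \ref{cork1}, since its hypothesis is precisely $\nabla(N) \subset W^{\perp} \otimes \Omega_X(\log D)$; and in case (c), the hypothesis $\nabla(N) \subset W \otimes \Omega_X(\log D)$, combined with the isotropy inclusion $W \subset W^{\perp}$, upgrades to $\nabla(N) \subset W^{\perp} \otimes \Omega_X(\log D)$, so Lemma \ref{cork1} again delivers the required conclusion.

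I do not anticipate a substantive obstacle here. The content of the lemma is essentially that the corank-one jump between $N$ and a maximal isotropic $W$ is the only one permitted by $\rk(V) \leq 5$, and in exactly that situation Lemma \ref{cork1} makes conditions (b) and (c) of Lemma \ref{criterion} automatic; the only mild care needed is to handle the boundary case $W = N$ separately, which is immediate once $\deg(N) \geq 0$ is assumed.
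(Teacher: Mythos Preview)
Your proposal is correct and follows the paper's own approach; indeed, the paper's proof consists solely of the sentence ``Lemma \ref{criterion} and Lemma \ref{cork1} give the following result,'' and you have simply spelled out the intended details. Your handling of the degenerate case $W=N$ and the use of Lemma \ref{cork1} to collapse cases (b) and (c) when $\rk(W)=2$ is exactly what is needed.
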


\begin{prop}\label{rank4}
Any $(V, \langle \, , \, \rangle, \nabla)$ with $\rk(V) \leq 4$ is quasi gr-semistable.
\end{prop}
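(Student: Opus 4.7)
The plan is to construct an explicit quasi gr-semistable Hodge filtration by case analysis on $\rk V$. For $\rk V \in \{1,2,3\}$, every nontrivial isotropic saturated subsheaf has rank $\leq \lfloor \rk V/2\rfloor \leq 1$; the local computation at the start of \S\ref{sect_small_rank} gives $\nabla(N) \subset N^\perp \otimes \Omega_X(\log D)$ automatically for every rank-one isotropic saturated $N$, so by the $m=0$ case of Example \ref{smallm} the trivial filtration $0 \subset V$ is quasi gr-semistable.

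For $\rk V = 4$ the isotropic saturated subsheaves are either rank one or Lagrangian. Let $d_1$ denote the maximum degree among rank-one isotropic saturated subsheaves of $V$ (achieved, since these degrees are integers bounded above by $\mu_{\max}(V)$). If every isotropic saturated subsheaf has $\deg \leq 0$, the $m=0$ condition of Example \ref{smallm} is vacuous for the trivial filtration $0 \subset V$. If $d_1 \leq 0$ but some Lagrangian has positive degree, I would pick such a Lagrangian $L$: every rank-one saturated subsheaf of $L$ is isotropic in $V$ and hence of degree $\leq d_1 \leq 0 < \mu(L) = \deg L/2$, so $L$ is stable as a rank-two sheaf, and I would use the even-type filtration $0 \subset L \subset V$. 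If $d_1 > 0$, I would pick $N$ rank-one isotropic saturated with $\deg N = d_1$ and first try the odd-type filtration $0 \subset N \subset N^\perp \subset V$: by Lemma \ref{criterionrk5} this is quasi gr-semistable provided $\nabla(W) \subset W \otimes \Omega_X(\log D)$ for every Lagrangian $W \supset N$ with $\deg W > 2\deg N$. If that holds for all such $W$, I would be done; otherwise, any offending $W$ must be stable as a rank-two sheaf, because an unstable or strictly semistable $W$ would contain a rank-one subsheaf of slope $\geq \mu(W) > \deg N$, necessarily isotropic in $V$, contradicting maximality of $d_1$. In that situation I would switch to $0 \subset W \subset V$.

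In every even-type filtration $0 \subset L_1 \subset V$ produced above ($L_1 = L$ or $L_1 = W$), $L_1$ is a stable Lagrangian of positive degree and Griffiths transversality is automatic. Condition I of Definition \ref{defn_gr_semi_even} holds because stability gives every saturated rank-one subsheaf of $L_1$ degree strictly less than $\mu(L_1) = \deg L_1/2 < \deg L_1$. For Condition II, when $T_1 = L_1$ the given Griffiths transversality already yields $\nabla(S_1) \subset T_1^\perp \otimes \Omega_X(\log D) = L_1 \otimes \Omega_X(\log D)$; when $T_1 \subsetneq L_1$, stability forces $\deg T_1 < \deg L_1/2$, so that $\deg S_1 \leq \deg T_1 < \deg L_1/2 < \deg L_1 - \deg T_1$ makes the hypothesis $\deg S_1 > \deg L_1 - \deg T_1$ impossible and the condition vacuous. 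The main subtlety is precisely the rank-four case when a positive-degree Lagrangian fails to be $\nabla$-invariant and spoils the odd-type filtration of Lemma \ref{criterionrk5}: the rescue is that maximality of $d_1$ forces any offending Lagrangian to be stable as a sheaf, so the even-type filtration $0 \subset W \subset V$ takes over.
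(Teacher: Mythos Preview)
Your proof is correct. The argument for $\rk V\le 3$ and the verification of Conditions~I and~II for the even-type filtration $0\subset L_1\subset V$ (with $L_1$ a stable Lagrangian of positive degree) are both fine; in particular, the key observation that any ``offending'' Lagrangian $W\supset N$ with $\deg W>2d_1$ must be \emph{stable} (else it would contain a rank-one isotropic of degree $>d_1$) is exactly what makes the switch to $0\subset W\subset V$ succeed.

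Your route in rank~$4$ is, however, organised differently from the paper's. The paper argues via the Harder--Narasimhan maximal destabilizer $M$ of the orthogonal sheaf $(V,\langle\,,\,\rangle)$: when $\rk M=1$ it applies Lemma~\ref{criterionrk5} directly to $0\subset M\subset M^\perp\subset V$ (no Lagrangian $W$ can have $\deg W>2\deg M=2\mu_{\max}(V)$), and when $\rk M=2$ it produces an even-type filtration through the Lagrangian $M$, in one sub-case even the length-four filtration $0\subset N\subset M\subset N^\perp\subset V$. You instead track the invariant $d_1=\max\{\deg N:\ N\ \text{rank-one isotropic}\}$, first attempt the odd-type filtration $0\subset N\subset N^\perp\subset V$, and only fall back on an even-type filtration when that fails. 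What your approach buys is that it never invokes the HN filtration and makes explicit that stability of the chosen Lagrangian is the sole input needed to verify Conditions~I and~II; what the paper's approach buys is brevity, since the semistability of $M$ as maximal destabilizer immediately handles Condition~I and forces the degree inequalities in Condition~II to be vacuous when $T_1\subsetneq M$ (indeed $0\subset M\subset V$ already works unconditionally when $\rk M=2$, so the paper's further sub-split there is not strictly needed).
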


\begin{proof}
It is trivial when $\rk(V)\leq3$, and then we only discuss the case $\rk(V)=4.$ If $(V, \langle \, , \, \rangle)$ is a semistable orthogonal sheaf, then the trivial filtration $$0\subset V$$
is quasi gr-semistable.

Now we assume that $(V, \langle \, , \, \rangle)$ is unstable and let $M$ be the maximal destabilizer. If $M$ is of rank one, the filtration
$$0\subset M\subset M^\perp\subset V$$
is quasi gr-semistable by Lemma \ref{criterionrk5} because there is no rank two isotropic subsheaf $W$ such that $\deg(W) > 2 \deg(M)$. Then, we suppose that $M$ is of rank two, i.e. it is Lagrangian. If there exists a rank one subsheaf $N$ such that
\begin{itemize}
    \item $\deg(N) \geq 0$ and $N \subset M$,
    \item $\nabla(N) \subset M\otimes \Omega_X(\log D)$,
\end{itemize}
then the filtration
$$0\subset N\subset M\subset N^\perp\subset V$$ is an even type quasi gr-semistable filtration, otherwise the filtration
$$0\subset M\subset V$$
is an even type quasi gr-semistable filtration. In conclusion, we find a quasi gr-semistable filtration for all cases. This finishes the proof.
\end{proof}
	

\subsection{Case II: $\boldsymbol{\rk(V) = 5}$}\label{subsect_rak_5}
Before we consider the case $\rk(V)=5$, we first construct a special orthogonal sheaf $(V, \langle \, , \, \rangle)$ together with an orthogonal $\lambda$-connection $\nabla$, which is not quasi gr-semistable.
	
\begin{prop}\label{irregular}
Let $(V, \langle \, , \, \rangle)$ be an unstable orthogonal sheaf of $\rk(V)\geq 5$. Suppose that
\begin{enumerate}
\item the maximal destabilizer  $M$ is of rank two,
\item $\frac{M^\perp}{M}$ is a stable orthogonal sheaf,
\item there exists a $\lambda$-cononection $\nabla$ satisfying
$$\nabla(M)\not \subset  M^\perp\otimes\Omega_X(\log D).$$
\end{enumerate}
Then $(V,\langle \, , \, \rangle,\nabla)$ is not quasi gr-semistable. Moreover, $(V, \langle \, , \, \rangle, \nabla)$ is semistable.
\end{prop}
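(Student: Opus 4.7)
The plan is to prove the two assertions separately: the semistability (the ``moreover''-part) first, then the principal claim that $(V,\langle\,,\,\rangle,\nabla)$ is not quasi gr-semistable.

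For semistability, by Proposition \ref{prop_HN_fil} applied to $(V,\nabla)$, the maximal $\nabla$-destabilizer is automatically isotropic, so it suffices to show that no nonzero $\nabla$-invariant isotropic subsheaf $W\subseteq V$ has $\mu(W)>0$. Since $M^\perp/M$ is stable orthogonal of degree zero, the HN filtration of $V$ as a sheaf reads $0\subset M\subset M^\perp\subset V$ with slopes $\mu(M)>0=\mu(M^\perp/M)>-\mu(M)$, and in particular $\mu_{\max}(V/M)=0$. I would then case on $N:=W\cap M$. If $M\subseteq W$, isotropy of $W$ forces $W\subseteq M^\perp$, and $\nabla$-invariance gives $\nabla(M)\subseteq\nabla(W)\subseteq W\otimes\Omega_X(\log D)\subseteq M^\perp\otimes\Omega_X(\log D)$, contradicting hypothesis~(3). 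If $N=0$, the injection $W\hookrightarrow V/M$ forces $\mu(W)\leq\mu_{\max}(V/M)=0$, again a contradiction. The residual case $\rk(N)=1$ is the most delicate, and I would handle it by combining the isotropy of $W$, the stability of $M^\perp/M$, and the $\nabla$-invariance of $W$ to reduce back to the first case.

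For the failure of quasi gr-semistability, I would use the semistability just proved together with Theorems \ref{thm_gr_and_quasi_gr_odd} and \ref{thm_even} to reduce the question to: no orthogonal Hodge filtration of $(V,\langle\,,\,\rangle,\nabla)$ is gr-semistable. Assume for contradiction that $\Fil:0=L_r\subsetneq\cdots\subsetneq L_0=V$ is a gr-semistable orthogonal Hodge filtration. The trivial filtration is ruled out immediately by the $m=0$ case of Example \ref{smallm}: taking $W=M$ gives $\deg M>0$ but $\nabla(M)\not\subset M^\perp\otimes\Omega_X(\log D)$. For nontrivial $\Fil$, the key object is the induced subsheaf $\Gr_\Fil(M)\subseteq\Gr_\Fil(V)$, which is isotropic of rank $2$ with $\deg\Gr_\Fil(M)=\deg M>0$. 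I would then enlarge it to a $\theta$-invariant isotropic subsheaf $\widetilde M\subseteq\Gr_\Fil(V)$ of positive slope, which destabilizes $(\Gr_\Fil(V),\overline{\langle\,,\,\rangle},\theta)$ and contradicts the gr-semistability of $\Fil$.

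The main obstacles are two. First, the rank-one intersection case in the semistability proof, where one must leverage the three constraints on $W$ simultaneously to produce a contradiction; here I expect the stability of $M^\perp/M$ and the skew structure of $\nabla|_M$ modulo $M^\perp$ to both play a role. Second, the construction of the isotropic $\theta$-invariant enlargement $\widetilde M$: a naive $\theta$-saturation of $\Gr_\Fil(M)$ can spill out of $\Gr_\Fil(M^\perp)=\Gr_\Fil(M)^\perp$, so a careful adjustment---for instance, intersecting the saturation with $\Gr_\Fil(M^\perp)$, or taking the appropriate symmetric closure---is needed in order to preserve isotropy while retaining the positivity of the slope.
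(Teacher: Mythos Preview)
Your semistability argument is sound and close in spirit to the paper's: both exploit the Harder--Narasimhan structure $0\subset M\subset M^\perp\subset V$ (so $\mu_{\max}(V/M)=0$) together with stability of $M^\perp/M$ and Lemma~\ref{cork1}. The rank-one intersection case you flag can be completed as you suggest: first show $W\subset M^\perp$ (if not, bound $\deg W$ by combining the slope estimate for the image in $V/M^\perp\cong M^\vee$ with the fact that $(W\cap M^\perp)/N$ is an isotropic subsheaf of the stable orthogonal sheaf $M^\perp/M$, forcing $\deg W\le 0$), and then $\nabla(N)\subset W\otimes\Omega_X(\log D)\subset M^\perp\otimes\Omega_X(\log D)$ plus Lemma~\ref{cork1} contradicts~(3).

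For the failure of quasi gr-semistability, your reduction via Theorems~\ref{thm_gr_and_quasi_gr_odd} and~\ref{thm_even} to ``no Hodge filtration is gr-semistable'' is valid, but the plan to produce a destabilizer by enlarging $\Gr_\Fil(M)$ has a genuine gap. The $\theta$-saturation of $\Gr_\Fil(M)$ need not remain isotropic, and intersecting back with $\Gr_\Fil(M^\perp)$ destroys $\theta$-invariance (since $\nabla(M^\perp)\not\subset M^\perp$ in general); there is no evident reason the slope survives either adjustment. More tellingly, your construction never invokes hypothesis~(3), yet any correct argument must: it is only the condition $\nabla(M)\not\subset M^\perp\otimes\Omega_X(\log D)$ that obstructs gr-semistability.

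The paper's route is direct and shows where~(3) enters. Assuming a quasi gr-semistable filtration $0=L_0\subsetneq L_1\subsetneq\cdots$ exists, Proposition~\ref{positive} (which you have available once you assume gr-semistability, via Proposition~\ref{ssregular}) gives $\mu_{\min}(L_1)\ge 0$. Then the compositions $L_1\hookrightarrow V\twoheadrightarrow V/M^\perp$ and $L_1\hookrightarrow M^\perp\twoheadrightarrow M^\perp/M$ vanish (by stability of $M^\vee$, resp.\ of $M^\perp/M$ together with isotropy of $L_1$), so $L_1\subset M$. If $L_1=M$, Griffiths transversality gives $\nabla(M)\subset L_2\otimes\Omega_X(\log D)\subset M^\perp\otimes\Omega_X(\log D)$, contradicting~(3). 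If $\rk L_1=1$, one examines $h:L_2\to L_1^\perp/M^\perp$: when $h=0$ one has $\nabla(L_1)\subset M^\perp\otimes\Omega_X(\log D)$ and Lemma~\ref{cork1} again contradicts~(3); when $h\ne 0$, $\ker h$ gives a filtration violating Condition~I. The step your plan is missing is precisely this: use $\mu_{\min}(L_1)\ge 0$ to force $L_1\subset M$, so that Griffiths transversality and Lemma~\ref{cork1} can be brought to bear against hypothesis~(3).
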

	
\begin{proof}
We prove the first statement by contradiction, and assume that $(V,\langle \, , \, \rangle,\nabla)$ is quasi gr-semistable. Let
$$\Fil: 0= L_0 \subsetneq L_1 \subsetneq L_2 \cdots  \subsetneq L_1^\perp\subsetneq L_0^\perp= V$$
be a quasi gr-semistable filtration.

Since $\frac{V}{M^\perp}\cong M^\vee$ is a stable bundle of $\deg(\frac{V}{M^\perp})<0$ and $\mu_{\min}(L_1)\geq 0$ by Proposition \ref{positive}, the composition
$$L_1\hookrightarrow V\twoheadrightarrow \frac{V}{M^\perp}$$
is zero. That is $L_1\subset M^\perp$.

Now we consider the composition
$$L_1\hookrightarrow M^\perp\twoheadrightarrow \frac{M^\perp}{M}.$$
Note that this composition is not surjective since $L_1$ is isotropic while $\frac{M^\perp}{M}$ is not isotropic. Then this map is zero because $\mu_{\min}(L_1)\geq 0$ and $\frac{M^\perp}{M}$ is a stable orthogonal sheaf of degree $0$ by assumption. This implies that $L_1\subset M$.

If $L_1=M$, then we have
$$\nabla(M)\subset L_2\otimes \Omega_X(\log D)\subset M^\perp\otimes \Omega_X(\log D),$$
which contradicts the assumption that $\nabla(M)\not \subset  M^\perp\otimes\Omega_X(\log D)$. Therefore, $L_1$ is a proper subsheaf of $M$.

Now consider the natural morphism
$$h: L_2\longrightarrow \frac{L_1^\perp}{M^\perp}$$
based on the above argument that $L_1$ is a proper subsheaf of $M$. If $h=0$, then $L_2\subset M^\perp$, and we have $$\nabla(L_1)\subset L_2\otimes \Omega_X(\log D)\subset M^\perp\otimes \Omega_X(\log D).$$
This actually implies
$$\nabla(M) \subset  M^\perp\otimes\Omega_X(\log D)$$ by Lemma \ref{cork1}, and we get a contradiction again. On the other hand, if $h\neq 0$, then
$$\deg(\ker(h))\geq \deg(L_2)+\deg(M)-\deg(L_1)> \deg(L_2)+\deg(L_1).$$
This inequality contradicts Condition I. in Definition \ref{defn_gr_semi_odd}. In conclusion, $(V, \langle \, , \, \rangle, \nabla)$ is not quasi gr-semistable.

\hfill{\space}

For the second statement, we still prove it by contradiction. Suppose that $(V, \langle \, , \, \rangle, \nabla)$ is unstable. Denote by $W$ the maximal destabilizer of $(V, \langle \, , \, \rangle, \nabla)$. Clearly, $W$ is isotropic of $\deg(W)>0$ and $\nabla(W)\subset W\otimes \Omega_X(\log D).$
	
If $W$ is a rank one saturated subsheaf, we apply the same argument above for compositions
\begin{align*}
    W \hookrightarrow V \twoheadrightarrow \frac{V}{M^{\perp}}, \quad W \hookrightarrow M^\perp\twoheadrightarrow \frac{M^\perp}{M}
\end{align*}
and obtain $W \subset M$. Therefore,
$$\nabla(M)\subset M^\perp\otimes\Omega_X(\log D)$$ by Lemma \ref{cork1}, which contradicts the assumption in the statement of the proposition. If $\rk(W)\geq 2$, we have $W= M$. Since $W$ is $\nabla$-invariant, $M$ is also $\nabla$-invariant. This also gives a contradiction.

Finally, suppose that $W$ is unstable as an orthogonal sheaf. Denote by $N\subset W$ be the maximal destabilizer. With the same argument as above, we have $N\subset M$ and $W\subset M^\perp.$ Therefore, $$\nabla(N)\subset W\otimes \Omega_X(\log D)\subset M^\perp\otimes \Omega_X(\log D),$$
and this implies that $\nabla(M) \subset  M^\perp\otimes\Omega_X(\log D)$ by Lemma \ref{cork1}, which is a contradiction.

In conclusion, $(V, \langle \, , \, \rangle, \nabla)$ is semistable.
\end{proof}

\begin{examp}\label{exmp_idea}
In this example, we will give a way to construct a triple $(V, \langle \, , \, \rangle, \nabla)$ satisfying conditions in Proposition \ref{irregular}.

Let $X$ be a smooth projective curve with genus $g(X) \geq 2$. Let $M$ be a rank two stable bundle of positive degree, and there is a natural orthogonal structure on $M \oplus M^{\vee}$ and denote it by $\langle \, , \, \rangle_M$. Let $N$ be an arbitrary stable orthogonal bundle on $X$ and the orthogonal structure of $N$ is denoted by $\langle \, , \, \rangle_N$. Define
$$V=M\oplus M^\vee\oplus N.$$
We have a natural orthogonal structure $\langle \, , \, \rangle$ on $V$ induced by $\langle \, , \, \rangle_M$ and $\langle \, , \, \rangle_N$.

Let $\nabla_0: M\longrightarrow M\otimes\Omega_X(\log D)$ be a logarithmic $\lambda$-connection on $M$, and $\nabla_0^\vee: M^\vee \rightarrow M^\vee \otimes \Omega_X(\log D)$ its dual logarithmic connection of $\nabla_0$. Then,
\begin{align*}
\begin{pmatrix}
    \nabla_0 & \\
    & -\nabla^\vee_0
\end{pmatrix}: M \oplus M^\vee \rightarrow (M \oplus M^\vee) \otimes \Omega_X(\log D)
\end{align*}
is an orthogonal $\lambda$-connection on $M \oplus M^\vee$. We also choose $\theta_1: M \rightarrow M^\vee \otimes \Omega_X(\log D)$ a nontrivial $\mathcal{O}_X$-linear morphism such that
$$\langle \theta_1(a),b \rangle_M + \langle a,\theta_1(b) \rangle_M = 0$$
for all local sections $a,b$ of $M$. We may always find such a section by enlarging the support of $D$. Clearly,
\begin{align*}
\begin{pmatrix}
    \nabla_0 & \\
    \theta_1 & -\nabla^\vee_0
\end{pmatrix}: M \oplus M^\vee \rightarrow (M \oplus M^\vee) \otimes \Omega_X(\log D)
\end{align*}
is an orthogonal $\lambda$-connection. Choosing a $\lambda$-connection $\nabla_N$ on $N$, we get an orthogonal $\lambda$-connection
\begin{align*}
\nabla:=
\begin{pmatrix}
    \nabla_0 & & \\
    \theta_1 & -\nabla^\vee_0 & \\
    & & \nabla_N
\end{pmatrix}: V \rightarrow V \otimes \Omega_X(\log D)
\end{align*}
on $(V, \langle \, , \, \rangle)$. From the above construction, we have
$$ \nabla(M)\not\subset M^\perp\otimes\Omega_X(\log D).$$
Thus, we obtain an orthogonal bundle $(V, \langle \, , \, \rangle)$ of rank larger than five with an orthogonal $\lambda$-connection $\nabla$ satisfying all conditions in Proposition \ref{irregular}.
\end{examp}

\begin{examp}\label{exmp_const}
In the above example, we give an approach to construct a triple $(V, \langle \, , \, \rangle, \nabla)$, which is not quasi gr-semistable. Following this approach, we provide a counterexample for Simpson's question (Question \ref{quest_simp}) in positive characteristic.

Let $X$ be a smooth projective curve over $k$ with genus $g$ such that
\begin{align*}
    3\leq p < g-1,
\end{align*}
where $p$ is the characteristic of $k$. There exists a stable bundle $M'$ on $X$ such that $\rk(M') = 2$ and $\deg(M')=1$. Consider the Frobenius pullback $M:=F^*_X(M')$, of which $\rk(M)=2$ and $\deg(M)=p$. Although $M$ may not be semistable \cite{Gi73}, we can choose a specific $M'$ such that $M$ is semistable. Since the degree $p$ and the rank $2$ are coprime, the semistable bundle $M$ is stable. Let $\nabla_0: M \rightarrow M \otimes \Omega_X$ be the canonical connection. Next, we consider the sheaf $(\wedge^2 M^{\vee}) \otimes \Omega_X$. By Riemann--Roch formula, we have
\begin{align*}
    h^0((\wedge^2 M^\vee) \otimes \Omega_X) & \geq \deg( (\wedge^2 M^\vee) \otimes \Omega_X ) + (1-g) \\
    & = (-p+2g-2) + (1-g) \\
    & = -p + (g-1) > 0
\end{align*}
Therefore, there exists a nontrivial section of $(\wedge^2 M^{\vee}) \otimes \Omega_X$. Following the construction in Example \ref{exmp_idea}, we construct an orthogonal bundle
\begin{align*}
    V:= M \oplus M^{\vee} \oplus \mathcal{O}_X
\end{align*}
of rank five with orthogonal structure $\langle \, , \, \rangle$. Then, the nontrivial section of $(\wedge^2 M^{\vee}) \otimes \Omega_X$ induces a nontrivial $\mathcal{O}_X$-linear morphism
\begin{align*}
    \theta_1: M \rightarrow M^{\vee} \otimes \Omega_X
\end{align*}
such that for local sections $a,b$ of $M$,
$$\langle \theta_1(a),b \rangle_M + \langle a,\theta_1(b) \rangle_M = 0.$$
By Example \ref{exmp_idea}, the orthogonal connection
\begin{align*}
\nabla:=
\begin{pmatrix}
    \nabla_0 & & \\
    \theta_1 & -\nabla^\vee_0 & \\
    & & d
\end{pmatrix}: V \rightarrow V \otimes \Omega_X \ ,
\end{align*}
is not quasi gr-semistable, where $d: \mathcal{O}_X \rightarrow \mathcal{O}_X$ is the exterior differential. Under the equivalence of gr-semistability and quasi gr-semistability (Theorem \ref{thm_gr_and_quasi_gr_odd}), we conclude that we find an example in rank five which is not gr-semistable.
\end{examp}

Proposition \ref{irregular} gives the construction of a special class of $(V, \langle \, ,\, \rangle, \nabla)$, which is not quasi gr-semistable. Furthermore, we can show that if $(V, \langle \, ,\, \rangle, \nabla)$ is not quasi gr-semistable and if $\rk(V)=5 \text{ and } 6$, then it must satisfy the conditions in Proposition \ref{irregular}. In the next proposition, we shall discuss the case $\rk(V)=5$ first.
	
\begin{prop}\label{classfication5}
Suppose that $\rk(V)=5$. A triple $(V,\langle \, , \, \rangle,\nabla)$ is not quasi gr-semistable if and only if $(V, \langle \, , \, \rangle)$ is an unstable orthogonal sheaf such that the maximal destabilizer $M$ is stable of rank two and satisfies
$$ \nabla(M)\not\subset M^\perp\otimes\Omega_X(\log D).$$
\end{prop}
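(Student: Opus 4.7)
The plan is to obtain the "if" direction from Proposition \ref{irregular} and to prove the "only if" direction by a case analysis that exhibits a quasi gr-semistable filtration whenever the stated conditions fail. The key enabler in rank five is that the Witt index of $(V,\langle\,,\,\rangle)$ equals two, so every isotropic subsheaf has rank at most two.

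For the "if" direction, assume $V$ is unstable with maximal destabilizer $M$ stable of rank two and $\nabla(M)\not\subset M^\perp\otimes\Omega_X(\log D)$. Then $M^\perp/M$ is a rank-one orthogonal sheaf, and is therefore automatically stable as an orthogonal sheaf because it admits no nonzero isotropic subsheaf. All three hypotheses of Proposition \ref{irregular} are thereby met, and it yields that $(V,\langle\,,\,\rangle,\nabla)$ is not quasi gr-semistable.

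For the contrapositive of the "only if" direction I split into four subcases. First, if $V$ is orthogonal-semistable then every isotropic saturated $W\subset V$ has $\deg W\le 0$, so the trivial filtration $0\subset V$ is quasi gr-semistable by the $m=0$ case of Example \ref{smallm}. Second, if $V$ is unstable with $M$ of rank one, then $0\subset M\subset M^\perp\subset V$ satisfies Lemma \ref{criterionrk5}: $\deg M>0$ by definition, and any rank-two isotropic saturated $W\supset M$ satisfies $\mu(W)\le\mu(M)=\deg M$ since $M$ is the maximal destabilizer, hence $\deg W\le 2\deg M$ and condition~(2) of the lemma is vacuous. Third, if $V$ is unstable with $M$ of rank two but not stable as a vector bundle, then $M$ is semistable as a Harder--Narasimhan piece, so it contains a saturated line subsheaf $N$ with $\mu(N)=\mu(M)>0$; this $N$ is isotropic in $V$, and Lemma \ref{criterionrk5} applies to $0\subset N\subset N^\perp\subset V$ by the same slope bound used in the previous subcase.

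The remaining and most delicate subcase is when $V$ is unstable, $M$ is stable of rank two, and $\nabla(M)\subset M^\perp\otimes\Omega_X(\log D)$. Here the Griffiths transversality condition on the filtration $0\subset M\subset M^\perp\subset V$ is exactly the hypothesis, so this is a bona fide orthogonal Hodge filtration, and I verify quasi gr-semistability using the $m=1$ case of Example \ref{smallm}. Condition~I reduces to $\mu_{\min}(M)\ge 0$, which follows from the stability of $M$ together with $\deg M>0$. For Condition~II the Witt-index-two constraint intervenes decisively: any isotropic saturated $S_2$ sandwiched between $M$ and $M^\perp$ has $\rk(S_2)=2=\rk(M)$ and $S_2\supset M$, forcing $S_2=M$; the required conclusion $\nabla(S_2)\subset S_2^\perp\otimes\Omega_X(\log D)$ then collapses exactly to the hypothesis $\nabla(M)\subset M^\perp\otimes\Omega_X(\log D)$. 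This collapse is the main obstacle of the argument and is precisely what ties the characterization to rank five.
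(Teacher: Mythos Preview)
Your proof is correct and follows essentially the same approach as the paper. The paper's proof is terser: it merges your Cases~2 and~3 into a single observation (``if $M$ is not a stable rank two bundle, there exists a saturated rank-one $N\subset M$ with $\deg N=\mu(M)$''), and for your Case~4 it simply asserts that $0\subset M\subset M^\perp\subset V$ is quasi gr-semistable without spelling out the Witt-index argument, whereas you verify this explicitly via the $m=1$ case of Example~\ref{smallm}.
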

	
\begin{proof}		
One direction is given by Proposition \ref{irregular}. We consider the other direction and assume that $(V,\langle \, , \, \rangle,\nabla)$ is not quasi gr-semistable. Clearly, $(V, \langle \, , \, \rangle)$ is unstable, otherwise the trivial filtration $$0\subset V$$ is quasi gr-semistable. Now denote by $M$ the maximal destabilizer of $V$. If $M$ is not a stable rank two bundle, there exists saturated subsheaf $N \subset M$ of rank one such that $\deg(N)=\mu(M).$ Then, the filtration
$$0\subset N\subset N^\perp\subset V$$
is quasi gr-semistable by Lemma \ref{criterionrk5}. Therefore, $M$ is stable of rank two. Furthemore, we have
$$\nabla(M)\not\subset M^\perp\otimes\Omega_X(\log D),$$ otherwise the filtration $$0\subset M \subset M^\perp\subset V$$ is quasi gr-semistable.
		
\end{proof}

\subsection{Case III: $\boldsymbol{\rk(V) = 6}$}
For the rank $6$ case, we first prove two lemmas.

\begin{lem}\label{degreeincreasing}
Suppose that $(V,\langle \, , \, \rangle,\nabla)$ is not quasi gr-semistable of rank $6$. Let $$\Fil_{LW}:0\subset L\subset W\subset L^\perp\subset V$$ be an orthogonal Hodge filtration such that
\begin{itemize}
    \item $\rk(L)=1, \ \rk(W)=3$ and $\deg(L)>0, \ \deg(W)>0$,
    \item $\deg(L)+\deg(W)>\mu_{\max}(V)$,
    \item $2\deg(L)+\deg(W)>2\mu_{\max}(V)$.
\end{itemize}
Then there exists a rank three isotropic subsheaf $W^\prime$ containing $L$ and satisfying one of the following conditions:
\begin{enumerate}
    \item $\deg(W^\prime)>\max\{2\deg(L),\deg(W)\}$;
    \item $\deg(W^\prime)>\deg(W)$, and the filtration $$\Fil_{LW}:0\subset L\subset W^\prime\subset L^\perp\subset V$$ is an orthogonal Hodge filtration;
    \item $\deg(W^\prime)>2\deg(L),$ and there exists a rank two subbundle $P_2$ of $W$ such that $$2\deg(P_2)>\deg(W)+2\deg(L),\ \nabla(P_2)\subset P_2^\perp\otimes\Omega_X(\log D),\ \nabla(P_2)\not\subset W\otimes\Omega_X(\log D).$$
\end{enumerate}
\end{lem}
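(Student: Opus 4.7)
Since $(V,\langle\,,\,\rangle,\nabla)$ is assumed not quasi gr-semistable, the Hodge filtration $\Fil_{LW}$ in particular cannot be quasi gr-semistable, so one of Conditions I, II of Definition \ref{defn_gr_semi_even} (even type, $m=2$) is violated. My plan is to extract from the violating data a rank two isotropic subsheaf $Q\subset W$ containing $L$, then extend $Q$ to a rank three Lagrangian $W'\supset L$ using the hyperbolic structure of $Q^\perp/Q$, and finally match the output to one of the three conclusions via degree bookkeeping and Griffiths considerations.

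For $m=2$ the Condition I test chain has the form $0\subset W_1\subset W_2$ with $W_1\in\{0,L\}$ (since $L$ has rank one) and $L\subset W_2\subset W$, while the Condition II test chain has the form $0\subset S_1\subset S_2\subset T_2^\perp\subset T_1^\perp\subset V$ with $S_1,T_1\in\{0,L\}$ and $L\subset S_2\subset T_2\subset W$. I would first rule out degenerate sub-cases: the rank three cases $W_2=W$, $S_2=W$, and $T_2=W$ either contradict the degree inequality $\deg W_2>\deg W$ (when $W_1=L$ in Condition I) or render the Condition II conclusion $\nabla(S_2)\subset W\otimes\Omega_X(\log D)$ automatic via Griffiths of the test filtration; the rank one cases $W_2=L$ and $S_2=L$ reduce to substructures of $\Fil_{LW}$ and do not violate the respective conditions. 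The sub-case $W_1=0$ in Condition I is handled using the two hypotheses on $\mu_{\max}(V)$ together with $\rk W_2\leq 3$ and the fact that any subsheaf of $V$ has slope bounded by $\mu_{\max}(V)$. In each surviving case I obtain a rank two isotropic $Q\subset W$ containing $L$, with an appropriate lower bound on $\deg Q$ (from the respective degree inequality) and with $\nabla(Q)\subset Q^\perp\otimes\Omega_X(\log D)$.

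To produce $W'$ I exploit the fact that $Q^\perp/Q$ has rank two with a nondegenerate induced form and contains the isotropic line $W/Q$ of degree $\deg W-\deg Q$; hence $Q^\perp/Q$ is (locally) a hyperbolic plane, and the complementary isotropic line $N'\subset Q^\perp/Q$ has degree $\deg Q-\deg W$. Setting $W'$ to be the saturated preimage of $N'$ in $Q^\perp\subset V$ gives a rank three Lagrangian containing $L$ of degree $\deg W'=2\deg Q-\deg W$. The dichotomy $\nabla(Q)\subset W\otimes\Omega_X(\log D)$ versus $\nabla(Q)\not\subset W\otimes\Omega_X(\log D)$ separates the outputs: in the former case one checks that $\Fil'_{LW}:0\subset L\subset W'\subset L^\perp\subset V$ is a Hodge filtration, giving conclusion (2), and a finer comparison $\deg Q>\deg L+\deg W/2$ (coming from $\deg Q>\deg W$ when $\deg W>2\deg L$) promotes this to conclusion (1); in the latter case one sets $P_2:=Q$, and conclusion (3) follows with $\deg W'=2\deg P_2-\deg W>2\deg L$ coming from the Condition II degree inequality $2\deg P_2>2\deg L+\deg W$.

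The main obstacle I foresee is the precise bookkeeping of degree inequalities at each step, especially balancing the lower bound on $\deg Q$ coming from the failing condition against the required bound on $\deg W'$ in each of the three conclusions, and verifying the Griffiths transversality of $\Fil'_{LW}$ by tracing the compatibility of $\nabla$ with the extension from $Q$ to $W'$. The hyperbolicity of $Q^\perp/Q$, granted by the presence of the isotropic line $W/Q$, is the essential geometric input enabling the extension.
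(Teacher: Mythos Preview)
Your hyperbolic-plane construction is a genuinely different idea from the paper's, but it has a real gap that is not just bookkeeping. The paper's proof is \emph{iterative}: once it extracts a rank-two isotropic $S$ (or $P_2$) from the failure of $\Fil_{LW}$, it builds an auxiliary Hodge filtration $\Fil_S$, $\Fil_{LS}$, $\Fil_{P_2}$, or $\Fil_{LP_2}$ and exploits the fact that \emph{this} filtration is again not quasi gr-semistable to produce $W'$. The crucial point is that the second iteration yields a $W'$ whose degree is bounded below by $2\deg S$ (or $2\deg P_2$), not by $2\deg S-\deg W$; and $W'$ need only contain $L$, not $Q$.

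Your direct construction, by contrast, forces $W'\supset Q$ and gives exactly $\deg W'=2\deg Q-\deg W$. This bound is too weak in the Condition~II sub-case $S_1=T_1=L$, $T_2=S_2=Q$ (in the paper's notation, $P_1=Q_1=L$). There the failing inequality is only
\[
2\deg Q>\deg W,
\]
so your $W'$ satisfies merely $\deg W'>0$. Take for instance $\deg L=3$, $\deg W=10$, $\deg Q=6$: then $\deg W'=2$, which is smaller than both $2\deg L=6$ and $\deg W=10$, so none of conclusions (1), (2), (3) is reached. (The bound $2\deg P_2>\deg W+2\deg L$ you quote for conclusion (3) comes only from the sub-case $S_1=T_1=0$.) You do have $\nabla(L)\subset Q$ here, so $\Fil'_{LW'}$ is Hodge, but that only helps if $\deg W'>\deg W$, which requires $\deg Q>\deg W$ --- again not available. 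Since in rank $6$ there are precisely two Lagrangians through a given rank-two isotropic $Q$, your construction cannot be repaired by a different choice of $W'\supset Q$; one must allow $W'\not\supset Q$, which is exactly what the paper's second iteration on $\Fil_{LP_2}$ achieves.
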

\begin{proof}
Since $(V,\langle \, , \, \rangle,\nabla)$ is not quasi gr-semistable, the filtration $\Fil_{LW}$ does not satisfy Condition I or II in Definition \ref{defn_gr_semi_even}. If $\Fil_{LW}$ does not satisfy Condition I, there exists a rank two isotropic sheaf $S$ between $L$ and $W$ such that
\begin{align*}
\deg(S)>\deg(L)+\deg(W)
\end{align*}
or
\begin{align*}
\deg(S)>\deg(W) \text{ and } \nabla(L)\subset S\otimes\Omega_X(\log D)
\end{align*}

\hfill{\space}

For the \emph{first case} $\deg(S)>\deg(L)+\deg(W)$, we consider
$$\Fil_S:0\subset S\subset S^\perp\subset V,$$
which is not quasi gr-semistable. Clearly, $\mu_{\min}(S)>0 $ because
\begin{align*}
    \deg(S)>\deg(L)+\deg(W)>\mu_{\max}(V).
\end{align*}
Then there exist isotropic subsheaves
$$0\subset T\subset U\subset S\subset W^\prime $$
such that $\rk(U)\leq1,\ \rk(W^\prime)=3$ and
\begin{align*}
    \nabla(U )\subset W^\prime\otimes\Omega_X(\log D),\  \deg(W^\prime)>2\deg(S)-\deg(U)-\deg(T).
\end{align*}
Then,
\begin{equation*}
\begin{split}
   \deg(W^\prime)&> 2\deg(S)-\deg(U)-\deg(T)\\
   &>\deg(W)+2\deg(L)+\deg(W)-\deg(U)-\deg(T)\\
   &>\deg(W)+2\mu_{\max}(V)-\mu_{\max}(V)-\mu_{\max}(V)=\deg(W)
\end{split}
\end{equation*}
Moreover, $U=L$ or $\deg(U)\leq \deg(S)-\deg(L)$. If $U=L$, it is clear
\begin{align*}
    \nabla(L)\subset W^\prime\otimes\Omega_X(\log D).
\end{align*}
If $\deg(U)\leq \deg(S)-\deg(L),$ we have
\begin{align*}
    \deg(W^\prime)>2\deg(S)-2(\deg(S)-\deg(L))=2\deg(L).
\end{align*}

\hfill{\space}

For the \emph{second case} that $\deg(S)>\deg(W)$ and $\nabla(L)\subset S\otimes\Omega_X(\log D)$, consider the filtration
$$\Fil_{LS}:0\subset L\subset S\subset S^\perp\subset L^\perp\subset V,$$
and there exists a rank three isotropic subsheaf $W^\prime$ containing $L$ such that $\deg(W^\prime)>2\deg(S)$ and
\begin{align*}
    \nabla(L)\subset W^\prime\otimes\Omega_X(\log D).
\end{align*}
Therefore,
\begin{align*}
    \deg(W^\prime)>2\deg(S)>2\deg(W)>\deg(W).
\end{align*}

\hfill{\space}

If $\Fil_{LW}$ does not satisfy Condition II, then there exists a Hodge filtration
$$P_1\subset P_2\subset  P_2^\perp\subset Q_1^\perp\subset V$$
such that $$0\subset P_1\subset Q_1\subset L  \subset P_2\subset W,$$  $$\rk(P_2)=2,\ 2\deg(P_2)>\deg(W)+2\deg(L)-\deg(P_1)-\deg(Q_1),$$ and $$\nabla(P_2)\not\subset W\otimes\Omega_X(\log D).$$ Note that $Q_1=0$ or $L.$ If $Q_1=0,$ then $P_1=0$. Thus,
\begin{align*}
    \deg(P_2)>\dfrac{1}{2}(\deg(W)+2\deg(L))>\mu_{\max}(V).
\end{align*}
With a similar argument as the proof in the \emph{first case}, such a subsheaf $W^\prime$ exists by considering the filtration
\begin{align*}
    \Fil_{P_2}:0\subset P_2\subset P_2^\perp\subset V.
\end{align*}
If $Q_1=L,$  a similar argument as in the \emph{second case} will produce such a subsheaf $W^\prime$ by considering
\begin{align*}
    \Fil_{LP_2}:0\subset L\subset P_2\subset P_2^\perp\subset L^\perp\subset V.
\end{align*}
This finishes the proof of this lemma.
\end{proof}

\begin{lem}\label{lagrange}
Suppose that $(V,\langle \, , \, \rangle,\nabla)$ is not quasi gr-semistable and is of rank six. Define $$\alpha(V):=\max_{M}\ \{\deg(M)\ |\ M \text{ is a rank three isotropic subsheaf of $V$ }\}.$$
Then $\alpha(V)<2\mu_{\max}(V).$
\end{lem}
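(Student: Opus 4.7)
The plan is to argue by contradiction: assume $\alpha(V) \geq 2\mu_{\max}(V)$, let $W \subset V$ be a rank-three isotropic subsheaf (automatically Lagrangian, since $\rk V = 6$) realizing $\deg(W) = \alpha(V)$, and produce a rank-three isotropic of strictly larger degree. The principal tool will be Lemma~\ref{degreeincreasing}, applied to an orthogonal Hodge filtration of the form $\Fil_{LW}: 0 \subset L \subset W \subset L^\perp \subset V$ for a carefully chosen rank-one saturated $L \subset W$.

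To construct $L$, I would exploit that the orthogonal compatibility of $\nabla$ makes the induced $\mathcal{O}_X$-linear morphism $\bar\nabla: W \to (V/W) \otimes \Omega_X(\log D) \cong W^\vee \otimes \Omega_X(\log D)$ antisymmetric, i.e.\ a section of $\Lambda^2 W^\vee \otimes \Omega_X(\log D)$. Antisymmetric morphisms of a rank-three sheaf have even generic rank, so either $\bar\nabla = 0$ or $\ker\bar\nabla$ has generic rank one. If $\bar\nabla = 0$, then $\nabla(W) \subset W \otimes \Omega_X(\log D)$, and I would check directly that the even-type filtration $0 \subset W \subset V$ itself is quasi gr-semistable: Condition~I follows from $\deg(W_1) \leq 2\mu_{\max}(V) \leq \alpha(V)$ for any saturated $W_1 \subsetneq W$ of rank $\leq 2$, and Condition~II is automatic because $S_1 \subset W$ forces $\nabla(S_1) \subset W \otimes \Omega_X(\log D)$. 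This alone contradicts the hypothesis.

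Otherwise $\ker \bar\nabla$ has generic rank one, and I take $L$ to be its saturation inside $W$. By construction $\nabla(L) \subset W \otimes \Omega_X(\log D)$, so $\Fil_{LW}$ is an orthogonal Hodge filtration. A short degree estimate, bounding $\deg(W/L)$ via the injection $W/L \hookrightarrow W^\vee \otimes \Omega_X(\log D)$ and using $\deg(W) = \alpha(V) \geq 2\mu_{\max}(V)$, should show $\deg(L) > 0$; the degree hypotheses of Lemma~\ref{degreeincreasing} then follow automatically. Invoking that lemma yields a rank-three isotropic $W'$ falling into one of its three cases, and cases~(1) and~(2) immediately give $\deg(W') > \deg(W) = \alpha(V)$, contradicting the maximality of $W$.

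The hard part will be case~(3), which only guarantees $\deg(W') > 2\deg(L)$ along with a rank-two subsheaf $P_2 \subset W$ satisfying $2\deg(P_2) > \deg(W) + 2\deg(L)$ and $\nabla(P_2) \not\subset W \otimes \Omega_X(\log D)$. My plan there is to feed $P_2$ back into the construction: the nonzero $\mathcal{O}_X$-linear projection $P_2 \to (P_2^\perp/W) \otimes \Omega_X(\log D)$ has a saturated rank-one kernel $L' \subset P_2 \subset W$ with $\nabla(L') \subset W \otimes \Omega_X(\log D)$, providing a new Hodge filtration $\Fil_{L'W}$ to which Lemma~\ref{degreeincreasing} applies again. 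Careful bookkeeping of the degrees $\deg(L)$, $\deg(P_2)$, $\deg(L')$—which are bounded above by $\deg(W) = \alpha(V)$—should force this iteration to terminate in case~(1) or~(2) after finitely many steps, producing the required rank-three isotropic of degree exceeding $\alpha(V)$. Establishing the strict monotonicity that guarantees termination, together with the positivity of $\deg(L)$ at the initial step, is the delicate technical point of the whole argument.
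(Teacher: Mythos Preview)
Your antisymmetry observation is correct and elegant: the induced map $\bar\nabla:W\to W^\vee\otimes\Omega_X(\log D)$ is skew, so when nonzero its saturated kernel $L$ has rank one, and indeed this $L$ is the very object the paper ends up using (there called $T_1$). Your treatment of the case $\bar\nabla=0$ is also fine. The gaps are the two points you yourself flag as delicate, and neither can be closed along the lines you sketch. For $\deg(L)>0$: the injection $W/L\hookrightarrow (W/L)^\vee\otimes\Omega_X(\log D)$ only yields $\deg(L)\ge\deg(W)-c$ with $c$ depending on $\Omega_X(\log D)$, a quantity completely unrelated to $\mu_{\max}(V)$, so the inequality $\deg(W)\ge 2\mu_{\max}(V)$ gives you nothing. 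For the iteration in case~(3): your new $L'=\ker\bigl(P_2\to(P_2^\perp/W)\otimes\Omega_X(\log D)\bigr)$ is exactly $P_2\cap\ker\bar\nabla$, hence (after saturation, at least on curves) coincides with $L$ again; you are reapplying Lemma~\ref{degreeincreasing} to the \emph{same} filtration $\Fil_{LW}$ and no monotone quantity moves.

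The paper resolves both issues with a single idea you are missing: an \emph{intermediate} maximization over rank-two subsheaves. Starting from the even filtration $0\subset M\subset V$, the failure of Condition~II produces a rank-two $T\subset M$ with $\deg(T)>\deg(M)/2$, $\nabla(T)\subset T^\perp\otimes\Omega_X(\log D)$ and $\nabla(T)\not\subset M\otimes\Omega_X(\log D)$; one then \emph{fixes $T$ of maximal degree} among such subsheaves. Applying non--quasi-gr-semistability to $0\subset T\subset T^\perp\subset V$ now forces a rank-one $T_1$ with $\deg(T_1)>0$ for free (the positivity drops out of the resulting inequalities, no appeal to $\Omega_X$ needed) and simultaneously pins down $S_2=M$. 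Finally Lemma~\ref{degreeincreasing} applied to $0\subset T_1\subset M\subset T_1^\perp\subset V$ kills case~(3) immediately, because the $P_2$ it produces satisfies $\deg(P_2)>\deg(T)$ and shares all the defining properties of $T$, contradicting the maximality of $\deg(T)$. So the rank-two maximization is what supplies both the positivity of the rank-one piece and the termination argument.
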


\begin{proof}
We still prove this lemma by contradiction, and assume that $\alpha(V)\geq2\mu_{\max}(V).$ Let $M$ be a rank three isotropic subsheaf of $V$ such that $\deg(M)=\alpha(V).$  Then we have $\mu_{\min}(M)\geq0$ since $\deg(M)\geq 2\mu_{\max}(V).$ This implies that the filtration
$$\Fil_{M}:0\subset M\subset V,$$
which is not quasi gr-semistable, satisfies Condition I. There exist saturated subsheaves $S\subset T\subset M$ such that
\begin{align*}
    \deg(T)+\deg(S)>\deg(M),\
    \nabla(S)\subset T^\perp\otimes\Omega_X(\log D), \
    \nabla(S)\not\subset M\otimes\Omega_X(\log D).
\end{align*}
Then we have $\rk(T)=2$, otherwise
$$\deg(T)+\deg(S)\leq \mu_{\max}(V)+\mu_{\max}(V)=2\mu_{\max}(V)\leq \deg(M),$$
which is a contradiction. In summary, there exists a rank two subsheaf $T$ of $M$ such that $$\deg(T)>\frac{\deg(M)}{2}\geq \mu_{\max}(V),\ \nabla(T)\subset T^\perp\otimes\Omega_X(\log D),$$ and $$\nabla(T)\not\subset M\otimes\Omega_X(\log D).$$
Moreover, $\mu_{\min}(T)\geq 0$.

\hfill{\space}

Now we choose such a subsheaf $T$ with maximal degree and consider the orthogonal Hodge filtration $$\Fil_{T}:0\subset T\subset T^\perp\subset V.$$
Since it is not quasi gr-semistable, there exist saturated subsheaves $S_1\subset T_1\subset T\subset S_2$ such that $S_2$ is isotropic of rank three and
$$\nabla(T_1)\subset S_2\otimes\Omega_X(\log D),\ \nabla(S_2)\not\subset S_2\otimes\Omega_X(\log D),$$
and
$$\deg(S_2)>2\deg(T)-\deg(T_1)-\deg(S_1).$$
Since $\nabla(S_2)\not\subset S_2\otimes\Omega_X(\log D)$, we have $\rk(T_1)\leq1$ by Lemma \ref{cork1}. If $T_1=0,$ clearly
$$\deg(S_2)>2\deg(T)\geq \deg(M).$$
This contradicts our choice of $M$. Therefore, we have $\rk(T_1)=1.$ Now if $\rk(T_1)=1$, we get the following inequality
\begin{equation*}
\begin{split}
\deg(\frac{S_2}{T})-\deg(\frac{T^\perp}{M})&=\deg(M)+\deg(S_2)-2\deg(T)\\
&>\deg(M)-\deg(T_1)-\deg(S_1)\\
&\ge2\mu_{\max}(V)-\mu_{\max}(V)-\mu_{\max}(V)=0,
\end{split}
\end{equation*}
which implies $S_2=M$ by the natural morphism $\frac{S_2}{T}\longrightarrow \frac{T^\perp}{M}.$ Moreover, we have $\deg(T_1)>0.$

\hfill{\space}

Then, we consider the orthogonal Hodge filtration $$\Fil_{T_1M}:0\subset T_1\subset M\subset T_1^\perp \subset V.$$
By Lemma \ref{degreeincreasing}, there exists a rank two saturated subsheaf $P_2 \subset M$ such that $$2\deg(P_2)>\deg(M)+2\deg(T_1),\ \nabla(P_2)\subset P_2^\perp\otimes\Omega_X(\log D),\ \nabla(P_2)\not\subset M\otimes\Omega_X(\log D).$$
Then,
\begin{equation*}
	\begin{split}
		2\deg(P_2)&>\deg(M)+2\deg(T_1)\\
		&>2\deg(T)-\deg(T_1)-\deg(S_1)+2\deg(T_1)\\
		&=2\deg(T)+\deg(T_1)-\deg(S_1)\\
		&\geq 2\deg(T)
	\end{split}.
\end{equation*}
This contradicts our choice of $\deg(T)$, which is of maximal degree. In conclusion, we have $\alpha(V)<2\mu_{\max}(V).$
\end{proof}

\begin{prop}\label{classfication6}
Let $(V,\langle \, , \, \rangle,\nabla)$ be an orthogonal sheaf of rank six together with an orthogonal $\lambda$-connection. Then $(V,\langle \ \rangle,\nabla)$ is not quasi gr-semistable if and only if $V$ is an unstable orthogonal sheaf such that
\begin{enumerate}
    \item the maximal destabilizer $M$ is a stable rank two saturated subsheaf,
    \item $\frac{M^\perp}{M}$ is stable or $\frac{M^\perp}{M}$ is a direct sum of two nonisomorphic orthogonal torsion free sheaves of rank one,
    \item $\nabla(M)\not\subset M^\perp\otimes\Omega_X(\log D)$.
\end{enumerate}
\end{prop}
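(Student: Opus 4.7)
The plan is to prove the equivalence in two directions, with the sufficiency part generalizing Proposition \ref{irregular} and the necessity part proceeding by case analysis on the rank of the maximal destabilizer $M$. Since the maximal destabilizer of $(V,\langle\,,\,\rangle)$ is always isotropic (by the orthogonal analogue of Proposition \ref{prop_HN_fil}), and since $\dim V = 6$, we have $\rk(M)\in\{1,2,3\}$; Lemma \ref{lagrange} rules out $\rk(M)=3$ as $\deg(M)=3\mu_{\max}(V)>2\mu_{\max}(V)$ would contradict $\alpha(V)<2\mu_{\max}(V)$. This localizes the entire argument to $\rk(M)\in\{1,2\}$.

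For sufficiency, I would mimic the proof of Proposition \ref{irregular}. Assume conditions (1)--(3) hold and suppose for contradiction that $\Fil: 0\subsetneq L_1\subsetneq\cdots$ is a quasi gr-semistable filtration. By Proposition \ref{positiveeven}, $\mu_{\min}(L_1)\geq 0$; since $V/M^\perp\cong M^\vee$ is stable of negative slope, the composition $L_1\to V/M^\perp$ vanishes, so $L_1\subset M^\perp$. The key new input is to show that the image of $L_1\to M^\perp/M$ is zero also in the decomposable case. When $M^\perp/M = A\oplus B$ is an orthogonal direct sum of two non-isomorphic orthogonal rank-one torsion free sheaves, each $A$, $B$ is self-dual of degree zero and carries a nondegenerate form, so neither is isotropic in $M^\perp/M$; non-isomorphism forces any rank-one subsheaf $C\subset A\oplus B$ with $\deg(C)\geq 0$ to be isomorphic to $A$ or $B$ and embedded as a summand, hence nonisotropic. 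This forces $L_1\subset M$, and the rest of the argument of Proposition \ref{irregular} (analysing $L_2\to L_1^\perp/M^\perp$, using Condition I of Definition \ref{defn_gr_semi_even}, and invoking Lemma \ref{cork1} together with $\nabla(M)\not\subset M^\perp\otimes\Omega_X(\log D)$) delivers the contradiction uniformly in both subcases of (2).

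For necessity, assume $(V,\langle\,,\,\rangle,\nabla)$ is not quasi gr-semistable; I aim to show all of (1)--(3) hold. Condition (1) is clear, else the trivial filtration works. If $\rk(M)=2$ but $M$ is not stable, a rank-one subsheaf $N\subset M$ with $\deg(N)=\mu_{\max}(V)$ lets $0\subset N\subset N^\perp\subset V$ serve as a quasi gr-semistable filtration via the arguments in Proposition \ref{classfication5}, using Lemma \ref{criterion} together with Lemma \ref{degreeincreasing} to propagate $\nabla$-invariance to rank-two and rank-three isotropic subsheaves. If $\rk(M)=2$ is stable and $\nabla(M)\subset M^\perp\otimes\Omega_X(\log D)$, then $0\subset M\subset M^\perp\subset V$ is a Hodge filtration whose quasi gr-semistability I verify with the same degree/transversality propagation. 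If $\rk(M)=1$, I use the filtration $0\subset M\subset M^\perp\subset V$ and argue via Lemma \ref{criterion}: the potentially offending rank-two or rank-three isotropic subsheaf $W\supset M$ with $\deg(W)>2\deg(M)$ is ruled out either directly by maximality of $M$ (giving the needed $\nabla$-invariance) or by invoking Lemma \ref{degreeincreasing}, together with Lemma \ref{lagrange} to cap the rank-three isotropic degree. Finally, in the remaining subcase $\rk(M)=2$ stable with $\nabla(M)\not\subset M^\perp\otimes\Omega_X(\log D)$ yet $M^\perp/M$ failing (2), one uses that $\deg(M^\perp/M)=0$, finds a rank-one subsheaf of $M^\perp/M$ of degree zero that is either nontrivial-extension or matches an isomorphic pair, and uses it to enlarge the filtration to $0\subset M\subset M'\subset M^{\prime\perp}\subset M^\perp\subset V$ where $M'$ is the preimage. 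Verifying that this is quasi gr-semistable — particularly Condition II against the obstruction at $\nabla(M)$ — is the main technical obstacle and relies crucially on the failure of stability / the isomorphism of the two orthogonal rank-one summands to absorb $\nabla(M)$ into $M^{\prime\perp}\otimes\Omega_X(\log D)$. I expect this last subcase to be the heart of the argument.
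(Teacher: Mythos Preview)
Your overall architecture is sound, and your sufficiency argument is in fact more carefully spelled out than the paper's (which simply cites Proposition \ref{irregular} without addressing the decomposable case of (2)). However, your plan for the final subcase of necessity—establishing condition (2)—has a genuine gap.

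You propose, assuming (1) and (3) already hold but (2) fails, to build a quasi gr-semistable orthogonal Hodge filtration
\[
0\subset M\subset M'\subset M'^{\perp}\subset M^{\perp}\subset V,
\]
with $M'$ the preimage of a degree-zero rank-one subsheaf of $M^{\perp}/M$. But $M\subset M'$ forces $M'^{\perp}\subset M^{\perp}$ and $M'\subset M^{\perp}$, so Griffiths transversality at the first step would require $\nabla(M)\subset M'\otimes\Omega_X(\log D)\subset M^{\perp}\otimes\Omega_X(\log D)$. You have already committed to (3), namely $\nabla(M)\not\subset M^{\perp}\otimes\Omega_X(\log D)$, so this filtration is \emph{never} a Hodge filtration—no choice of $M'$ inside $M^{\perp}$ can ``absorb'' $\nabla(M)$ into $M'^{\perp}\otimes\Omega_X(\log D)$, because $M'^{\perp}\subset M^{\perp}$ as well. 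Your stated hope that the isomorphism of summands or a nontrivial extension would rescue this is therefore unrealizable.

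The paper circumvents this obstruction by proving (2) \emph{before} and independently of (3), and without building any filtration at all. It simply observes that if $M^{\perp}/M$ is not stable as an orthogonal sheaf, it contains an isotropic rank-one subsheaf of degree $\geq 0$; the preimage $Z\subset V$ is then a rank-three isotropic subsheaf with $\deg(Z)\geq\deg(M)=2\mu_{\max}(V)$, contradicting Lemma \ref{lagrange}. So the correct route for (2) is a one-line application of Lemma \ref{lagrange}, not a filtration construction; you should reorder your case analysis to handle (2) right after (1), using only the bound $\alpha(V)<2\mu_{\max}(V)$.
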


\begin{proof}
One direction is exactly proven by Proposition \ref{irregular}. Now we assume that $(V,\langle \ \rangle,\nabla)$ is not quasi gr-semistable. Clearly, the trivial filtration $0 \subset V$ is not quasi gr-semistable, and then $V$ is unstable. We prove the other three statements by contradiction.

\hfill{\space}

Let $M$ be the maximal destabilizer of $V$. Lemma \ref{lagrange} shows that $\rk(M) \leq 2$. If $M$ is not stable of rank two, then there exists a saturated subsheaf $N \subset M$ of rank one such that $\deg(N)=\mu(M)=\mu_{\max}(V).$ Now consider the  orthogonal Hodge filtration
$$\Fil_{N}:0\subset N\subset N^\perp\subset V.$$
Since it is not quasi gr-semistable, there exists a Hodge filtration
$$R_1\subset R_2\subset  U_1^\perp\subset V$$
such that $$0\subset R_1\subset U_1\subset N  \subset R_2$$
with $\rk(R_2)=2$ or $3$,
$$\deg(R_2)>2\deg(N)-\deg(R_1)-\deg(U_1),$$
and
$$\nabla(R_2)\not\subset R_2^\perp\otimes\Omega_X(\log D).$$
We first suppose $\rk(R_2)=2$. Then, the condition $\nabla(R_2)\not\subset R_2^\perp\otimes\Omega_X(\log D)$ and Lemma \ref{cork1} implies that $R_1 = U_1 = 0$.  Then,
$$\deg(R_2)>2\deg(N)=2\mu_{\max}(V),$$
and this is a contradiction. Thus, $\rk(R_2)=3$. In this case, $U_1=0$ or $U_1 = N$. With a similar argument as above, Lemma \ref{lagrange} implies $U_1=N$. In summary, there exists a rank three isotropic subsheaf $R_2$ such that $$\deg(R_2)>0,\ \nabla(N)\subset R_2 \otimes \Omega_X(\log D),\  \nabla(R_2)\not\subset R_2 \otimes \Omega_X(\log D).$$ Now we choose such a subsheaf $R_2$ with maximal degree and  consider the orthogonal Hodge filtration
$$\Fil_{NR_2}:0\subset N\subset R_2\subset N^\perp \subset V.$$
By Lemma \ref{degreeincreasing}, there exists an isotropic subsheaf $W^\prime$ of rank three such that $\deg(W^\prime)>2\deg(N)=2\mu_{\max}(V)$. However, this contradicts Lemma \ref{lagrange}. Therefore, the maximal destabilizer $M$ of $V$ is stable of rank two.

\hfill{\space}

For the second statement, suppose that $\frac{M^\perp}{M}$ is not stable. Then there exists a rank three isotropic subsheaf $Z$ contain $M$ such that $\deg(\frac{Z}{M})\geq 0.$ We have $\deg(Z)\geq \deg(M)=2\mu_{\max}(V)$, which contradicts Lemma \ref{lagrange}. Therefore, $\frac{M^\perp}{M}$ is stable.

\hfill{\space}

For the last statement, if $ \nabla(M)\subset M^\perp\otimes\Omega_X(\log D),$ we consider the Hodge filtration $$\Fil_{M}:0\subset M\subset M^\perp\subset V,$$
which is not quasi gr-semistable. Then there exist saturated subsheaves $S_1\subset T_1\subset M\subset S_2$ such that $S_2$ is isotropic of rank three and
$$\nabla(T_1)\subset S_2\otimes\Omega_X(\log D),\ \nabla(S_2)\not\subset S_2\otimes\Omega_X(\log D),$$
and
$$\deg(S_2)>2\deg(M)-\deg(T_1)-\deg(S_1).$$
Lemma \ref{cork1} implies $\rk(T_1)\leq1$. Then, 
\begin{align*}
    \deg(S_2) & >2\deg(M)-\deg(T_1)-\deg(S_1) \\
    & \geq 4\mu_{\max}(V)-\mu_{\max}(V)-\mu_{\max}(V)=2\mu_{\max}(V),
\end{align*}
and this contradicts Lemma \ref{lagrange}. Therefore, we have $ \nabla(M)\not\subset M^\perp\otimes\Omega_X(\log D).$
\end{proof}
	
Now apply the above results to algebraic curves. For elliptic curves, structure of vector bundles is clear \cite{A57,T93}. We have the following corollaries.
	
\begin{cor}
Let $X$ be an elliptic curve. Let $(V,\langle \, , \, \rangle,\nabla)$ be an orthogonal sheaf of rank five together with an orthogonal $\lambda$-connection on $X$. Then $(V,\langle \, , \, \rangle,\nabla)$ is not quasi gr-semistable if and only if $$V=M\oplus M^\vee\oplus N,$$  $$\ \nabla(M)\not\subset (M\oplus N)\otimes\Omega_X(\log D),$$
where $M$ is a rank two stable bundle of odd degree and $N$ is a line bundle such that $N^{\otimes 2}\cong \mathcal{O}_X.$
\end{cor}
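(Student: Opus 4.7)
The plan is to specialize Proposition~\ref{classfication5} to the elliptic curve and use Atiyah's classification to force an orthogonal splitting. By Proposition~\ref{classfication5}, $(V,\langle\,,\,\rangle,\nabla)$ fails to be quasi gr-semistable if and only if $V$ is an unstable orthogonal sheaf whose maximal destabilizer $M$ is stable of rank two and $\nabla(M) \not\subset M^\perp \otimes \Omega_X(\log D)$. Since $\det V$ is $2$-torsion, $\mu(V)=0$, so destabilization gives $\deg M > 0$; Atiyah's theorem asserts that a rank-two bundle on an elliptic curve is stable precisely when $\gcd(2,\deg)=1$, forcing $\deg M$ to be odd.

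Next, the isotropic filtration $0\subset M\subset M^\perp\subset V$ induced by the pairing has $V/M^\perp \cong M^\vee$, while $M^\perp/M$ is a rank-one orthogonal sheaf, i.e.\ a line bundle $N$ with $N^{\otimes 2}\cong\mathcal{O}_X$. I show that both extensions
\begin{gather*}
0 \to M \to M^\perp \to N \to 0, \\
0 \to M^\perp \to V \to M^\vee \to 0
\end{gather*}
split. Their obstructions lie in $H^1(M\otimes N)$ and $H^1(M\otimes M)\oplus H^1(M\otimes N)$ respectively (using $N^\vee\cong N$). Since $\Omega_X\cong\mathcal{O}_X$, Serre duality gives $H^1(E)\cong H^0(E^\vee)^\vee$ for any coherent $E$. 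The bundle $M\otimes N$ is stable of positive degree (tensoring by a line bundle preserves stability), so its dual has no global sections and $H^1(M\otimes N)=0$. For $M\otimes M$, the Atiyah--Tu classification implies that the tensor product of semistable bundles on an elliptic curve is semistable, so $M\otimes M$ is semistable of positive slope and the same Serre-duality argument yields $H^1(M\otimes M)=0$.

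Once both extensions split, $V\cong M\oplus N\oplus M^\vee$ as sheaves. To upgrade this to an orthogonal decomposition, I modify a chosen vector-bundle complement $W$ of $M^\perp$ in $V$ by a map $\phi:W\to M^\perp$: first arranging $\langle W, N\rangle=0$ (so that $W\subset N^\perp$), then arranging $W$ to be isotropic. The obstructions to each modification lie in the same groups $H^1(M\otimes N)$ and $H^1(M\otimes M)$ shown to vanish above. This identifies $M^\perp$ with $M\oplus N$ inside the standard orthogonal sum $M\oplus M^\vee\oplus N$, and so translates the condition $\nabla(M)\not\subset M^\perp\otimes\Omega_X(\log D)$ from Proposition~\ref{classfication5} into the claimed form $\nabla(M)\not\subset(M\oplus N)\otimes\Omega_X(\log D)$.

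The main obstacle is the Witt-type orthogonal upgrade: even after the two sheaf-level extensions split, one must ensure that the modifications can be chosen compatibly with the bilinear form, which again reduces to the same cohomological vanishings. A secondary delicate point is the semistability of $M\otimes M$ in arbitrary characteristic, for which one needs the full strength of the Atiyah--Oda--Tu classification on elliptic curves rather than any general tensor-product argument.
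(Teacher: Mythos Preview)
Your argument is correct and supplies the details the paper omits; the paper simply cites Atiyah and Tu and states the corollary without proof. Two comments on streamlining and on one imprecision.

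For the bundle-level splitting you can bypass the $\mathrm{Ext}$ computations entirely. Atiyah's theorem says every vector bundle on an elliptic curve decomposes as a direct sum of indecomposable bundles, each of which is semistable; grouping the summands of $V$ by slope therefore realises the Harder--Narasimhan filtration $0\subset M\subset M^\perp\subset V$ as a direct-sum splitting $V\cong M\oplus N\oplus M^\vee$ immediately. This sidesteps the ``secondary delicate point'' you flag about $M\otimes M$ being semistable in positive characteristic.

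Your description of the orthogonal upgrade is slightly off, though harmlessly so. Neither modification actually carries an $H^1$ obstruction. To force $\langle W,N\rangle=0$ one only needs the form on $N$ to be nondegenerate, so that $\phi_N:W\to N$ is determined uniquely by $\langle\phi_N(w),n\rangle=-\langle w,n\rangle$. To then make the resulting $W'\subset N^\perp$ isotropic, one seeks $\psi\in\Hom(W',M)\cong H^0(M\otimes M)$ whose symmetrisation equals the given form $\langle\,,\,\rangle|_{W'}\in H^0(\mathrm{Sym}^2 M)$; since $M\otimes M\to\mathrm{Sym}^2 M$ splits in characteristic $\neq 2$, such $\psi$ always exists at the level of global sections. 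So the Witt-type step is unconditional (away from characteristic $2$), and the only genuine input from the elliptic-curve hypothesis is the splitting of the HN filtration.
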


\begin{cor}\label{elliptic6}
Let $X$ be an elliptic curve. Let $(V,\langle \, , \, \rangle,\nabla)$ be an orthogonal sheaf of rank six together with an orthogonal $\lambda$-connection on $X$. Then $(V,\langle \, , \, \rangle,\nabla)$ is not quasi gr-semistable if and only if
$$V=M\oplus M^\vee\oplus N_1\oplus N_2,$$  $$\ \nabla(M)\not\subset (M\oplus N_1\oplus N_2)\otimes\Omega_X(\log D),$$
where $M$ is a rank two stable bundle of odd degree, and $N_1,N_2$ are nonisomorphic line bundles such that $N_i^{\otimes 2}\cong \mathcal{O}_X$ for $i=1,2.$
\end{cor}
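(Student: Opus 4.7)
The plan is to apply Proposition \ref{classfication6} and translate its three structural conditions using the classification of vector bundles on an elliptic curve. The backward direction is immediate: if $V$ has the stated direct-sum form, then $M$ is the unique slope-maximal indecomposable summand, hence the maximal destabilizer; $M^\perp = M \oplus N_1 \oplus N_2$, so $M^\perp/M = N_1 \oplus N_2$ is a direct sum of two non-isomorphic rank-one orthogonal line bundles; and the hypothesis on $\nabla$ is exactly condition (3) of Proposition \ref{classfication6}.

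For the forward direction, I begin with the three conditions of Proposition \ref{classfication6}. Two observations tailored to the elliptic curve setting pin down the shape of $M$ and $M^\perp/M$. First, since $V$ is orthogonal one has $\deg V = 0$, so $M^\perp/M$ is rank two of degree zero; since Atiyah's theorem produces stable bundles of rank $r$ and degree $d$ only when $\gcd(r,d)=1$, the first alternative in condition (2) of Proposition \ref{classfication6} is impossible, and we must have $M^\perp/M = N_1 \oplus N_2$ with $N_1 \not\cong N_2$ and $N_i^{\otimes 2}\cong\mathcal{O}_X$. Second, $M$ stable of rank two forces $\deg M$ to be odd by Atiyah, and $\deg M > 0$ because $M$ destabilizes the degree-zero bundle $V$.

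Next I would promote the filtration $0 \subset M \subset M^\perp \subset V$ to a decomposition $V \cong M \oplus M^\vee \oplus N_1 \oplus N_2$ by splitting the two short exact sequences
\begin{align*}
0 \longrightarrow M \longrightarrow M^\perp \longrightarrow N_1\oplus N_2 \longrightarrow 0, \qquad 0 \longrightarrow M^\perp \longrightarrow V \longrightarrow M^\vee \longrightarrow 0.
\end{align*}
Since $\Omega_X \cong \mathcal{O}_X$, Serre duality gives $\mathrm{Ext}^1(A,B) \cong \mathrm{Hom}(B,A)^\vee$. Vanishing of $\mathrm{Hom}(M, N_i)$ and $\mathrm{Hom}(N_i, M^\vee)$ is elementary: a nonzero map from stable $M$ to the degree-zero line bundle $N_i$ would yield a rank-one quotient of $M$ of slope $\le 0 < \mu(M)$, contradicting stability, and the dual argument handles the second. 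A small additional check shows the splittings are compatible with the orthogonal form; on $M^\perp/M$ the non-isomorphism $N_1 \not\cong N_2$ forces the off-diagonal pairing on $N_1 \oplus N_2$ to vanish.

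The main obstacle I anticipate is the remaining vanishing $\mathrm{Ext}^1(M^\vee, M) \cong \mathrm{Hom}(M, M^\vee)^\vee = 0$. A tempting route via semistability of $M \otimes M$ or $\mathrm{Sym}^2 M$ would rely in positive characteristic on the delicate assertion that tensor products of semistable bundles remain semistable. I plan to sidestep this by a direct slope-and-rank argument: any nonzero $f: M\to M^\vee$ is either of rank one, producing a line subsheaf of $M^\vee$ of slope $>0$ in contradiction with stability of $M^\vee$, or of full rank, which forces $\deg M \le \deg M^\vee = -\deg M$ and is absurd for $\deg M > 0$. This argument is uniform in the characteristic and closes the forward direction.
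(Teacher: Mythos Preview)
The paper gives no separate proof of this corollary, presenting it as an immediate consequence of Proposition~\ref{classfication6} together with the Atiyah--Tu classification of bundles on elliptic curves; your proposal follows exactly this route and correctly supplies the details the paper omits --- in particular the $\mathrm{Ext}^1$-vanishings (via Serre duality with $\Omega_X\cong\mathcal{O}_X$) that split the filtration $0 \subset M \subset M^\perp \subset V$, and the direct rank-by-rank argument for $\Hom(M,M^\vee)=0$ that avoids any appeal to semistability of tensor products. Your reading of ``stable'' in condition~(2) as slope-stability of the underlying sheaf is reasonable (otherwise the second alternative would be redundant, since $N_1\oplus N_2$ with distinct $2$-torsion $N_i$ is already orthogonally stable), and on an elliptic curve either reading forces $M^\perp/M \cong N_1 \oplus N_2$.
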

	
For projective line $\mathbb{P}^1_k$, we have the following corollary.	
\begin{cor}\label{proline5}
Let $(V,\langle \, , \, \rangle,\nabla)$ be an orthogonal bundle on $\mathbb{P}^1_k$ together with an orthogonal $\lambda$-connection. If $\rk(V) \leq 6$, then it is quasi gr-semistable.
\end{cor}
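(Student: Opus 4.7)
The plan is to reduce the statement to the classification results just established (Propositions \ref{rank4}, \ref{classfication5}, \ref{classfication6}) and then observe that their ``bad'' hypotheses cannot be met on $\mathbb{P}^1_k$. The decisive input is Grothendieck's splitting theorem: every vector bundle on $\mathbb{P}^1_k$ decomposes as a direct sum of line bundles, so there is no stable vector bundle of rank $\geq 2$ on $\mathbb{P}^1_k$.

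First, for $\rk(V)\leq 4$, Proposition \ref{rank4} already gives the conclusion with no hypothesis on $X$, so there is nothing to check. The interesting ranks are $5$ and $6$.

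For $\rk(V)=5$, I would apply Proposition \ref{classfication5}: the failure of quasi gr-semistability forces the maximal destabilizer $M$ of $(V,\langle\,,\,\rangle)$ to be a stable rank two saturated subsheaf. Since $X=\mathbb{P}^1_k$ is a smooth curve, any saturated subsheaf is locally free, hence a vector bundle; but Grothendieck's theorem rules out a stable rank two bundle on $\mathbb{P}^1_k$. Therefore the necessary condition for non-quasi-gr-semistability cannot occur, and $(V,\langle\,,\,\rangle,\nabla)$ must be quasi gr-semistable.

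For $\rk(V)=6$, the argument is identical, invoking Proposition \ref{classfication6} instead: that result again requires the maximal destabilizer to be a stable rank two subsheaf of $V$, which is impossible on $\mathbb{P}^1_k$ for the same splitting reason. Consequently the classification collapses to the quasi gr-semistable case, which finishes the proof. I do not expect any genuine obstacle here; the whole content of the corollary is that the obstruction detected by Propositions \ref{classfication5} and \ref{classfication6} (namely, the existence of a rank two stable destabilizing subbundle) simply cannot arise on the projective line. The only thing to be careful about is verifying that ``saturated subsheaf on a smooth projective curve'' is the same as ``subbundle,'' so that Grothendieck's theorem applies directly to $M$; this is standard.
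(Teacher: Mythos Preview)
Your argument is correct and matches the paper's own proof: both observe that Propositions \ref{classfication5} and \ref{classfication6} require a stable rank two subbundle as maximal destabilizer, which cannot exist on $\mathbb{P}^1_k$ by Grothendieck's splitting theorem. The paper simply states this in one line without separately treating $\rk(V)\leq 4$, but the content is identical.
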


\begin{proof}
Note that there is no stable bundle of rank two on $\mathbb{P}^1_k$. This corollary is a direct consequence of Proposition \ref{classfication5} and \ref{classfication6}.
\end{proof}
	
Based on Corollary \ref{proline5}, we intend to believe the truth of the following question.

\begin{ques}
Is any orthogonal bundle with an orthogonal $\lambda$-connection on the projective line $\P^1_k$  quasi gr-semistable and hence gr-semistable?
\end{ques}\label{prolineconj}

\vspace{2mm}
{\bf Acknowledgement.} The first and third named authors are partially supported by the National Key R and D Program of China 2020YFA0713100, CAS Project for Young Scientists in Basic Research Grant No. YSBR-032. The second named author is partially supported by National Key R and D Program of China 2022YFA1006600.

\bibliographystyle{plain}

\bigskip
\noindent\small{\textsc{School of Mathematical Sciences, University of Science and Technology of China, \\ Hefei, 230026, China}} \\

\noindent\small{\textsc{Yau Mathematical Science Center, Tsinghua University, \\
Beijing, 100084, China}}\\
\emph{E-mail address}:  \texttt{msheng@mail.tsinghua.edu.cn}

\bigskip
\noindent\small{\textsc{Department of Mathematics, South China University of Technology, \\
Guangzhou, 510641, China}\\
\emph{E-mail address}:  \texttt{hsun71275@scut.edu.cn}

\bigskip
\noindent\small{\textsc{Yau Mathematical Science Center, Tsinghua University, \\
Beijing, 100084, China}\\
\emph{E-mail address}:  \texttt{jianpw@mail.tsinghua.edu.cn}	
\end{document}